\documentclass[11pt]{amsart}

\usepackage{amssymb,graphicx}
\usepackage{verbatim}
\usepackage{color}

\usepackage[colorlinks=true,urlcolor=blue,
citecolor=red,linkcolor=blue,linktocpage,pdfpagelabels,bookmarksnumbered,bookmarksopen]{hyperref}

\newtheorem{teo}{Theorem}[section]
\newtheorem{lm}[teo]{Lemma}
\newtheorem{prop}[teo]{Proposition}

\newtheorem*{main}{Main Theorem}
\theoremstyle{definition}
\newtheorem{definition}[teo]{Definition}
\newtheorem*{nota}{Notation}
\newtheorem{oss}[teo]{Remark}
\newtheorem*{ack}{Acknowledgements}

\headheight .3cm
\headsep .4cm
\topskip 0cm
\footskip 0.5cm
\oddsidemargin 0.3cm
\evensidemargin 0.3cm
\topmargin .5cm
\textwidth 16.5cm
\textheight 21cm

\numberwithin{equation}{section}

\author[Bousquet]{Pierre Bousquet}
\address{Institut de Math\'ematiques de Toulouse, CNRS UMR 5219, Universit\'e de Toulouse, F-31062 Toulouse Cedex 9, France.}
\email{pierre.bousquet@math.univ-toulouse.fr}

\author[Brasco]{Lorenzo Brasco}
\address{Aix-Marseille Universit\'e, CNRS, Centrale Marseille, I2M, UMR 7373, 13453 Marseille, France}
\email{lorenzo.brasco@univ-amu.fr}

\title[Global Lipschitz continuity]{Global Lipschitz continuity\\ for minima of degenerate problems}

\date{\today}
\keywords{Degenerate and singular problems; regularity of minimizers; uniform convexity}
\subjclass[2010]{49N60, 49K20, 35B65}

\begin{document}

\begin{abstract}
We consider the problem of minimizing the Lagrangian $\int [F(\nabla u)+f\,u]$ among functions on $\Omega\subset\mathbb{R}^N$ with given boundary datum $\varphi$. We prove Lipschitz regularity up to the boundary for solutions of this problem, provided $\Omega$ is convex and $\varphi$ satisfies the bounded slope condition. The convex function $F$ is required to satisfy a qualified form of uniform convexity {\it only outside a ball} and no growth assumptions are made.
\end{abstract}

\maketitle

\begin{center}
\begin{minipage}{11cm}
\small
\tableofcontents
\end{minipage}
\end{center}

\section{Introduction}

\subsection{Aim of the paper}

Consider the following variational problem arising in the study of optimal thin torsion rods (see \cite{ABFL})
\[
\min_{u\in W^{1,2}_0(\Omega)}\int_{\Omega} \Big[F(\nabla u)-\lambda\,u\Big]\,dx.
\] 
Here \(\Omega\) is a bounded open set of \(\mathbb{R}^2\), \(\lambda\in \mathbb{R}^+\) and the function \(F\) is given by
\begin{equation}
\label{lagrangian_bouchitte_fragala}
F(z)= \left\{\begin{array}{lc}
|z|,& \mbox{ if } |z|\leq 1,\\
&\\
\displaystyle\frac{1}{2}\,|z|^2  + \frac{1}{2},& \mbox{ if } |z|>1.
\end{array} 
\right.
\end{equation}
By the direct methods in the Calculus of Variations, this problem admits at least a solution $u$. 
The regularity of \(u\) is obviously limited by the singularities and degeneracy of \(F\). 
In this respect, observe that \(F\) is non differentiable at \(0\) and the Hessian of \(F\) degenerates at any point of the unit ball.
\par
When \(\Omega\) is a disc centered at the origin, one can prove that this solution is unique, radial and Lipschitz continuous, but not \(C^1\). The main purpose of this paper is to establish that such a regularity property remains true in a more general framework. Notably, we merely assume that the domain \(\Omega\subset\mathbb{R}^N\) is convex (here \(N\geq 2\)) and we replace the parameter \(\lambda\) by a generic function \(f\in L^{\infty}(\Omega)\). We also allow for more general boundary conditions. Most importantly, we consider singular and degenerate Lagrangians, which may have a {\it wild growth at infinity}. Given such a convex function \(F\) which may be singular and/or degenerate inside a ball, we thus consider more generally the following problem: 
\begin{displaymath}
\tag{$P_0$}
\label{minimisation0}
\min\left\{\mathcal{F}(u):=\int_{\Omega} \Big[F(\nabla u) +f\,u\Big] \,dx\, :\, u-\varphi\in W^{1,1}_0(\Omega)\right\}.
\end{displaymath}
\subsection{A glimpse of BSC condition}
In order to neatly motivate the study of this paper and explain some of the difficulties we have to face, let us start by recalling some known facts about Lipschitz regularity.
One of the simplest instances of problem \eqref{minimisation0} is when \(f\equiv 0\) and $F$ is strictly convex. This substantially simplifies the situation since then:
\vskip.2cm
\begin{enumerate}
\item \label{property_1} a {\it comparison principle} holds true, i.e. if \(u\) and \(v\) are two solutions of \eqref{minimisation0} in \(\varphi+W^{1,1}_0(\Omega)\) and \(\psi+W^{1,1}_0(\Omega)\) respectively and \(\varphi\leq \psi\) on \(\partial \Omega\), then \(u\leq v\) on \(\Omega\) as well. This statement can be generalized to the case when \(F\) is merely convex but superlinear, see \cite{Ce, MT}.
\vskip.2cm
\item \label{property_2} an affine map \( v: x\mapsto \langle \zeta, x \rangle +a\) is a minimum.
\end{enumerate}
\vskip.2cm
The strict convexity of \(F\) also implies the uniqueness of the minimum. From the second observation above, it thus follows that when the boundary datum \(\varphi\) is affine, \(\varphi\) is the unique minimum. In particular, it is Lipschitz continuous. In contrast, when \(\varphi\) is assumed to be merely Lipschitz continuous, such a regularity property can not be deduced for a minimizer: even if \(F(\nabla u)=|\nabla u|^2\) and \(\Omega\) is a ball, the harmonic extension of a Lipschitz function \(\varphi:\partial \Omega \to \mathbb{R}\) is not Lipschitz in general (see \cite{Cl} for a counterexample), but only H\"older continuous (see e.g. \cite{BCVPDE,BMT2}).  
\par
These observations led to consider boundary data satisfying the so-called {\it bounded slope condition}. We  say that \(\varphi : \mathbb{R}^N \to \mathbb{R}\) satisfies the bounded slope condition (see also Section \ref{sec:2}) if \(\varphi|_{\partial \Omega}\) coincides with the restriction to \(\partial \Omega\) of a convex function \(\varphi^-\) and a concave function \(\varphi^+\). Equivalently,  \(\varphi|_{\partial\Omega}\) can be written as the supremum of a family of affine maps and also as the infimum of another family of affine maps.  When \(F\) is convex and \(f\equiv 0\), the bounded slope condition implies the existence of a Lipschitz solution to \eqref{minimisation0}, see \cite[Teorema 1.2]{Miranda} or \cite[Theorem 1.2]{Gi}.
The proof  relies in an essential way on the two properties \eqref{property_1} and \eqref{property_2}. 

\par
When \(f\not\equiv 0\), these two properties are false in general and the above approach must be supplemented with new ideas.  Stampacchia \cite{St} considered Lagrangians of the form \(F(\nabla u) + G(x,u)\) for some function \(G:\Omega\times \mathbb{R} \to \mathbb{R}\) satisfying suitable growth conditions (see also \cite{HaSt}). For bounded minima, there is no loss of generality in assuming  that \(G(x,u)\) has the form \(f(x)\,u\), see Section \ref{sec:5}.  In the following, we shall thus make this restriction. 
\par
In this case the bounded slope condition can still be exploited to obtain a Lipschitz regularity result when \(F\) is {\it uniformly convex}, in the following sense: \(F\) is \(C^2\) and there exists \(\mu>0\), \(\tau>-1/2\) such that for every \(z, \xi\in \mathbb{R}^N\),
\begin{equation}
\label{uniform_convexity_Stampacchia}
\langle D^2F(z)\,\xi, \xi\rangle\geq \mu\,(1+|z|^2)^{\tau}\,|\xi|^2.
\end{equation}
In this framework, if \(\Omega\) is uniformly convex  and \(f\in C^{0}(\overline{\Omega})\), then in \cite{St} it is proven that every solution of \eqref{minimisation0} is Lipschitz continuous. In Stampacchia's proof, the uniform convexity of \(F\) is used in a crucial way to compensate the pertubation caused by the lower order term \(f(x)\,u\). In this respect, we point out that when \(f\) is a constant map and \(F\) is {\it isotropic}\footnote{By this, we mean that \(F(z)=h(|z|)\) for some convex function \(h\).}, it is  possible to consider a more general class of functions \(F\), see \cite{FT}.

\subsection{Main results}
We now describe our contribution to the regularity theory for degenerate and singular Lagrangians which are {\it uniformly convex at infinity}. We first detail the uniform convexity property that will be considered in this paper. 
In what follows, we note by $B_R$ the $N-$dimensional open ball of radius $R>0$ centered at the origin.
\begin{definition}
\label{definition_uniformly_convex}
Let $\Phi:(0,+\infty)\to (0,+\infty)$ be a continuous function such that
\begin{equation}
\label{superlinear}
\lim_{t\to +\infty} t\, \Phi(t)=+\infty.
\end{equation}
We say that a map $F: \mathbb{R}^N \to \mathbb{R}$ is {\it $\Phi-$uniformly convex outside the ball} $B_R\subset \mathbb{R}^N$ if for every $z, z'\in \mathbb{R}^N$ such that the segment \([z, z']\) does not intersect \(B_R\) and for every  $\theta\in [0,1]$
\begin{equation}
\label{equc}
F(\theta\, z +(1-\theta)\,z')\leq \theta\, F(z) +(1-\theta)\,F(z')-\frac{1}{2}\,\theta\,(1-\theta)\,\Phi\big(|z|+|z'|\big)\,|z-z'|^2.
\end{equation}
If the previous property holds with $\Phi\equiv \mu>0$, we simply say that $F$ is {\it $\mu-$uniformly convex outside the ball} $B_R\subset \mathbb{R}^N$.
\end{definition}
The following is the main result of the paper:
\begin{main}
\label{teo:teoglobreg}
Let \(\Omega\subset\mathbb{R}^N\) be a bounded convex open set, \(\varphi:\mathbb{R}^N \to \mathbb{R}\) a Lipschitz continuous function, \(F:\mathbb{R}^{N}\to \mathbb{R}\) a convex function and $f\in L^\infty(\Omega)$. We consider the following problem
\begin{displaymath}
\tag{$P_\Phi$}
\label{minimisation}
\inf\left\{\mathcal{F}(u):=\int_{\Omega} \Big[F(\nabla u) +f\, u\Big] \,dx\, :\, u-\varphi\in W^{1,1}_0(\Omega)\right\}.
\end{displaymath}
Assume that \(\varphi|_{\partial \Omega}\) satisfies the bounded slope condition of rank $K\ge 0$ and that \(F\) is $\Phi-$uniformly convex outside the ball \(B_R\), for some $R>0$.
Then problem \eqref{minimisation} admits at least a solution and every such a solution is Lipschitz continuous. More precisely, we have
\[
\|u\|_{L^\infty(\Omega)}+\|\nabla u\|_{L^\infty(\Omega)}\le \mathcal{L}=\mathcal{L}(N,\Phi,K,R,\|f\|_{L^\infty(\Omega)},\mathrm{diam}(\Omega))>0,
\]
for every solution $u$.
\end{main}
Some comments on the assumptions of the previous result are in order.
\begin{oss}
Observe in particular that we allow for Lagrangians which are not necessarily \(C^1\). Moreover, {\it we do not assume any growth condition from above} on \(F\) at infinity. For example, the previous result covers the case of 
\begin{equation}
\label{exemples}
F(z)=\mu\,\left(|z|^p-\delta\right)_++\sum_{i=1}^N |z_i|^{p_i}\qquad \mbox{ or }\qquad F(z)=\mu\,\left(|z|-\delta\right)^p_++\sum_{i=1}^N |z_i|^{p_i},
\end{equation}
where $\mu>0$, $\delta\ge 0$, $p>1$ and $1<p_1\le \dots\le p_N$ without any further restriction. On the contrary, the case $\mu=0$ is not covered by our result, not even when $p_1=p_2=\dots=p_N$ (see the recent paper \cite{BBJ} for some results in this case).
Of course, many more general functions $F$ can be considered, not necessarily of power-type: for example
\[
F(z)=(|z|-\delta)_+\,\big[\log(1+|z|)\big]^p,
\]
with $\delta\ge 0$ and $p>1$ fulfills our hypothesis. The case $p=1$ is ruled out by condition \eqref{superlinear}.
\par
Finally, the domain \(\Omega\) is convex (this is implicitly implied by the bounded slope condition), but not necessarily uniformly convex nor smooth.
\end{oss}

\subsection{Steps of the proof}

The first step of the proof is an approximation lemma which is new in several respects.
Given a bounded open convex set \(\Omega\) and a  function \(\varphi:\mathbb{R}^N\to \mathbb{R}\) which satisfies the bounded slope condition  of rank \(K\) for some \(K>0\), we construct 
\begin{itemize}
\item a sequence of smooth bounded open convex sets \(\Omega_k\supset\Omega\) converging to $\Omega$ (for the Hausdorff metric);
\vskip.2cm
\item a sequence of smooth functions \(\varphi_k\) which satisfy the bounded slope condition of rank \(K+1\) on \(\Omega_k\), such that \(\varphi_k=\varphi\) on \(\partial \Omega\).
\end{itemize} 
This construction relies on some properties of the  bounded slope condition that were initially discovered by Hartman \cite{Hartman-66, Hartman-68}.
In addition, we approximate the function \(F\) satisfying the $\Phi-$uniform convexity assumption \eqref{equc_bis} by a sequence of smooth  functions which are uniformly convex on the whole \(\mathbb{R}^N\). We are thus reduced to consider a variational problem $(P_k)$ for which the existence of a smooth solution $u_k$ is well-known. The goal is to establish regularity estimates on $u_k$, which are independent of \(k\).
\par
The second step is the construction of suitable barriers for the regularized problem $(P_k)$. Here the bounded slope condition plays a key role. This approach is quite standard but we have to overcome a new difficulty with respect to \cite{St}:  the stronger degeneracy of the Lagrangian. This is handled by introducing new explicit barriers adapted to this setting. From this construction, we deduce a uniform bound on \[
\|u_k\|_{L^{\infty}(\Omega_k)}+\|\nabla u_k\|_{L^{\infty}(\partial \Omega_k)}.
\]
In the third step of the proof, we obtain an estimate on the Lipschitz constant of \(u_k\). The method that we follow is classical in the setting of nonsmooth Lagrangians, which do not admit an Euler-Lagrange equation. We  compare a minimum \(u\) with its translations, namely functions of the form \(u(\cdot+\tau)\), \(\tau \in \mathbb{R}^N\). Once again, we have to cope with the degeneracy of the higher order part \(F\) of the Lagrangian, in presence of a term depending on \(x\) and \(u\). The main idea in \cite{St} was that the uniform convexity of \(F\) could be used to neutralize the lower order term \(f(x)\,u\). In our situation, this is only possible when \(|\nabla u|>R\). On the set where \(|\nabla u|\leq R\) instead, the gradient is obviously bounded almost everywhere, but this does not imply that the function \(u\) is Lipschitz continuous there, since we have no information on the regularity of this set. 
\par
Once a Lipschitz estimate independent of \(k\) is established, it remains to pass to the limit when \(k\) goes to \(\infty\) in order to establish the existence of a Lipschitz solution to the original problem \eqref{minimisation}. Since $\mathcal{F}$ is not strictly convex in general, this is not sufficient to infer that {\it every minimizer} is a Lipschitz function. In order to conclude the proof, we use that the lack of strict convexity is ``confined'' in the ball $B_R$, thus the Lipschitz regularity of a minimizer can be ``propagated'' to all the others (see Lemma \ref{propagationofregularity}).

\subsection{Comparison with previous results}
In order to handle the Lagrangian \(F\) given by \eqref{lagrangian_bouchitte_fragala}, an entirely different approach could have been followed.
Indeed, such an \(F\) has a {\it Laplacian structure at infinity}, i.e. for every $\xi\in\mathbb{R}^N$
\[
\langle D^2 F(z)\,\xi,\xi\rangle\simeq |\xi|^2,\qquad \mbox{ for }|z|\gg 1.
\]
Instead of exploiting the properties of the boundary condition \(\varphi\), one can rely on the specific growth property satisfied by \(F\) and prove a Lipschitz estimate by using test functions arguments and Caccioppoli-type inequalities.
\par
It is impossible to give a detailed account of all the contributions to the regularity theory of (local) minimizers of Lagrangians having more generally a \(p-\)Laplacian structure at infinity, i.e. such that for every $\xi\in\mathbb{R}^N$
\[
\langle D^2 F(z)\,\xi,\xi\rangle\simeq |z|^{p-2}\,|\xi|^2,\qquad \mbox{ for }|z|\gg 1.
\]
We cite the pioneering papers \cite{CE, GM, Ra}. More recently, many studies have been devoted to this subject, see for example \cite{Br, CF, CarM, DSV, DK, EMT, LPV} and \cite{SV}. 
Among the many contributions on the topic, we wish to mention the paper \cite[Theorem 2.7]{FFM} by Fonseca, Fusco and Marcellini. Here the Lagrangian has the form \(F(x,\nabla u)\) and is assumed to be \(p-\)uniformly convex at infinity in the following sense: there exist \(p>1\), \(\mu>0\) and \(R>0\) such that for every \(\xi, \xi'\in \mathbb{R}^N\setminus B_R\) and for every \(\theta \in [0,1]\),
\begin{equation}
\label{uniform_convexity}
F(x,\theta\, z+(1-\theta)\,z')\leq \theta\, F(x,z) + (1-\theta)\,F(x,z') -\frac{\mu}{2}\,\theta\,(1-\theta)\,(|z| + |z'|)^{p-2}|\xi-\xi'|^2.
\end{equation}
Observe that when \(F\) is \(C^2(\mathbb{R}^N)\), condition \eqref{uniform_convexity} coincides with \eqref{uniform_convexity_Stampacchia} for every \(z\in \mathbb{R}^N\setminus B_R\).
In addition, \(F\) is assumed to have $p-$growth, i.e.
\begin{equation}
\label{growth}
0\le F(x,\xi)\le L\,(1+|\xi|)^p,\qquad \xi\in\mathbb{R}^N.
\end{equation}
Then \cite[Theorem 2.7]{FFM} shows that every local minimizer is locally Lipschitz continuous. Observe that this holds for local minimizers, thus no regular boundary conditions $\varphi$ are needed. In particular, this kind of result can not be deduced from our Main Theorem.
\par
On the other hand, such a result is less general for two reasons: first of all, condition \eqref{uniform_convexity} is more restrictive than our \eqref{equc}, since it corresponds to the particular case $\Phi(t)=\mu\,t^{p-2}$; more importantly,
{\it we do not assume any growth condition} of the type \eqref{growth} on $F$.
\par
It should be pointed out that the technique of \cite{FFM} can be pushed further, by weakening \eqref{growth} and replacing it by a $q-$growth assumption, i.e.
\[
0\le F(x,\xi)\le L\,(1+|\xi|)^q,\qquad \xi\in\mathbb{R}^N,
\]
with $q>p$, see for example \cite[Theorem 1.1]{CGM}.  But still in this case, some restrictions are necessary: namely a condition like $q\le (1+C_N)\,p$ is required. Indeed, for $q$ and $p$ too far away, well-known examples show that local minimizers could be even unbounded (see \cite{Ho} for a counterexample and \cite{CoM,EMT,Ma89} for some regularity results on so-called {\it $(p,q)$ growth problems}). In particular, with these methods it is not possible to consider Lagrangians like \eqref{exemples}. 
As for global regularity, though not explicitely stated in \cite{FFM} or \cite{CGM}, we point out that the results of \cite{CGM,FFM} can be extended (as done in \cite{Br} for the Neumann case) to Lagrangians of the form \(F(\nabla u) + f(x)\,u\), provided $\Omega$ and the boundary datum are smooth enough.

\subsection{Plan of the paper}
In Section \ref{sec:2} we introduce the required notation and definitions. We recall some basic regularity results that will be needed throughout the whole paper. 
We also establish a new approximation lemma for a function \(\varphi\) which satisfies the bounded slope condition (this is Lemma \ref{lemma_approximation_sets}).
Then in Section \ref{sec:3} for the sake of completeness we show that problem \eqref{minimisation} admits solutions.
The proof of the Main Theorem is then contained in Sections \ref{sec:4} \& \ref{sec:5} : at first we show the result under the stronger assumption that $F$ is $\mu-$uniformly convex everywhere; then we deduce the general result by an approximation argument. Finally, Section \ref{sec:6} considers the case of more general functionals, where the lower order term $f(x)\,u$ is replaced by terms of the form $G(x,u)$. A (long) Appendix containing some results on uniformly convex functions complements the paper.

\begin{ack}
We warmly thank Guido De Philippis for pointing out a flaw in a preliminary version of this paper. Part of this work has been written during a visit of the first author to Marseille and of the second author to Toulouse. The IMT and I2M institutions and their facilities are kindly acknowledged. 
\end{ack}

\section{Preliminaries}
\label{sec:2}

\subsection{The bounded slope condition and approximation of convex sets}

\begin{definition}
\label{definition_bsc}
Let \(\Omega\) be a bounded open set in \(\mathbb{R}^N\) and $K>0$.
We say that a map \(\varphi : \partial \Omega \to \mathbb{R}\) satisfies the {\it bounded slope condition of rank $K$} if for every \(y\in \partial \Omega\), there exist \(\zeta_{y}^-, \zeta_{y}^+ \in \mathbb{R}^N\) such that \(|\zeta_{y}^-|, |\zeta_{y}^+ |\leq K\) and 
\begin{equation}
\label{eqBSC}
\varphi(y)+\langle \zeta_{y}^-, x-y\rangle \leq \varphi(x) \leq \varphi(y)+\langle \zeta_{y}^+, x-y\rangle,\qquad \mbox{ for every } x\in \partial \Omega.
\end{equation}
\end{definition}

\begin{oss}
We recall that whenever there exists a non affine function \(\varphi : \partial \Omega \to \mathbb{R}\)  satisfying the bounded slope condition, the set \(\Omega\) is necessarily convex (see \cite[Chapter 1, Section 1.2]{Gi}).
\end{oss}
As observed by Miranda \cite{Miranda} and Hartman \cite{Hartman-66}, there is a close relationship between this condition and the regularity of \(\varphi\). For example, we recall the following result contained in \cite[Corollaries 4.2 \& 4.3]{Hartman-66}.
\begin{prop}[\cite{Hartman-66}]
\label{teo:hartman}
If \(\Omega\subset\mathbb{R}^N\) is a $C^{1,1}$ open bounded convex set and \(\varphi\) satisfies the bounded slope condition, then \(\varphi\) is \(C^{1,1}\). If in addition $\Omega$ is assumed to be uniformly convex, then the converse is true as well, i.e. if \(\varphi\) is \(C^{1,1}\), then it satisfies the bounded slope condition.
\end{prop}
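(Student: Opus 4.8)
The plan is to treat the two implications of the statement separately. Throughout I use the affine maps $\ell_y^\pm(x):=\varphi(y)+\langle\zeta_y^\pm,x-y\rangle$ furnished by the bounded slope condition, so that $\ell_y^-\le\varphi\le\ell_y^+$ on $\partial\Omega$, with equality at $x=y$, and $|\zeta_y^\pm|\le K$.

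\emph{Direction 1: BSC $\Rightarrow$ $\varphi\in C^{1,1}$, assuming $\Omega\in C^{1,1}$.} First note that $\varphi$ is automatically $K$-Lipschitz on $\partial\Omega$: for $x,y\in\partial\Omega$ one has $\varphi(x)-\varphi(y)\le\langle\zeta_y^+,x-y\rangle\le K|x-y|$ and, using $\ell_y^-$, also $\varphi(y)-\varphi(x)\le K|x-y|$. To upgrade this to $C^{1,1}$ I would argue in local charts. Fix $y_0\in\partial\Omega$ and, using that $\Omega$ is $C^{1,1}$, represent $\partial\Omega$ near $y_0$ as a graph $\{x_N=g(x')\}$ with $g\in C^{1,1}$ and $\Omega$ on the side $x_N>g(x')$; set $\psi(x'):=\varphi(x',g(x'))$. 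Plugging $x=(z',g(z'))$ and $y=(x',g(x'))$ into the bounded slope inequalities and using the second-order estimate $|g(z')-g(x')-\langle\nabla g(x'),z'-x'\rangle|\le\tfrac12\|D^2g\|_\infty|z'-x'|^2$ (this is exactly where $C^{1,1}$ of $\Omega$, and not merely $C^1$, is needed), one obtains vectors $p^\pm(x')$ and a constant $C$, uniform over the chart, with
\[
\psi(x')+\langle p^-(x'),z'-x'\rangle-C|z'-x'|^2\ \le\ \psi(z')\ \le\ \psi(x')+\langle p^+(x'),z'-x'\rangle+C|z'-x'|^2 .
\]
The right-hand inequality says $\psi-C|\cdot|^2$ is concave, the left one that $\psi+C|\cdot|^2$ is convex; a function that is simultaneously semiconvex and semiconcave is of class $C^{1,1}$. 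Indeed, writing $h:=\psi+C|\cdot|^2$ and $\tilde h:=-\psi+C|\cdot|^2$, both are convex with $h+\tilde h=2C|\cdot|^2$, whence $0\le D^2h\le 4C\,\mathrm{Id}$ as measures, so $\nabla h$, and thus $\nabla\psi$, is Lipschitz. Since $y_0$ was arbitrary, $\varphi\in C^{1,1}(\partial\Omega)$.

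\emph{Direction 2: $\varphi\in C^{1,1}$ $\Rightarrow$ BSC, assuming $\Omega$ uniformly convex and $C^{1,1}$.} Here I may assume $\varphi$ extended to a $C^{1,1}$ function near $\overline{\Omega}$, and set $\Lambda:=\|\nabla\varphi\|_\infty$, $L:=\mathrm{Lip}(\nabla\varphi)$. By uniform convexity of $\Omega$ there is $\rho>0$ such that $\overline{\Omega}$ is contained in the ball of radius $1/(2\rho)$ internally tangent to $\partial\Omega$ at each $y$; expanding the squared distance to its centre $y+\tfrac1{2\rho}\,\nu(y)$ gives
\[
\langle x-y,\nu(y)\rangle\ \ge\ \rho\,|x-y|^2 \qquad\text{for every } x\in\overline{\Omega},\ y\in\partial\Omega,
\]
with $\nu(y)$ the inner unit normal. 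For $x\in\partial\Omega$, combining this with the $C^{1,1}$ bound $\varphi(x)\le\varphi(y)+\langle\nabla\varphi(y),x-y\rangle+\tfrac L2|x-y|^2$ yields $\varphi(x)\le\varphi(y)+\langle\nabla\varphi(y)+\tfrac L{2\rho}\nu(y),x-y\rangle$, and symmetrically a lower bound with $-\tfrac L{2\rho}\nu(y)$. Thus $\zeta_y^\pm:=\nabla\varphi(y)\pm\tfrac L{2\rho}\nu(y)$ realize the bounded slope condition, with $|\zeta_y^\pm|\le\Lambda+\tfrac L{2\rho}=:K$.

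\emph{Main obstacle.} The delicate implication is the first one: the bounded slope condition only provides, at each boundary point, two a priori unrelated affine supports, and one has to extract from them a genuine first-order Taylor expansion with a quadratic remainder. The mechanism is that, after flattening the $C^{1,1}$ boundary, these supports become paraboloids touching $\psi$ from above and from below, so that the two auxiliary facts I would isolate as separate lemmas are: (i) a function which is at once semiconvex and semiconcave is $C^{1,1}$ (this naturally belongs to the Appendix on uniformly convex functions), and (ii) the rolling-ball estimate for uniformly convex bodies used in the converse direction. Everything else is routine bookkeeping with Lipschitz constants.
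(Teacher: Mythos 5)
The paper does not prove this proposition: it is stated with a direct citation to Hartman's results (\cite[Corollaries 4.2 \& 4.3]{Hartman-66}), and no proof is given in the text. So there is no internal argument to compare against. That said, your proof is correct and self-contained. The decomposition is the natural one: in Direction 1 you flatten the $C^{1,1}$ boundary, turn the two supporting affine functions of the BSC into touching paraboloids for the flattened boundary datum $\psi$, and then invoke the standard fact that a function that is simultaneously semiconvex and semiconcave (with the same constant) is $C^{1,1}$; the uniform bound $|\zeta_y^\pm|\le K$ together with $\|D^2g\|_\infty$ from the $C^{1,1}$ chart is exactly what gives the uniform paraboloid opening $C$. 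In Direction 2 the enclosing-ball inequality $\langle x-y,\nu(y)\rangle\ge\rho|x-y|^2$, which is the standard quantitative form of uniform convexity, is combined with the second-order Taylor bound for the $C^{1,1}$ extension of $\varphi$ to produce explicit slopes $\zeta_y^\pm=\nabla\varphi(y)\pm\tfrac{L}{2\rho}\nu(y)$. Both halves are complete modulo the two auxiliary facts you correctly identify as lemmas, and your sketches of those lemmas are sound (for (i), the bound $0\le D^2h\le 4C\,\mathrm{Id}$ on the distributional Hessian of the convex function $h=\psi+C|\cdot|^2$ does imply $h\in W^{2,\infty}_{\mathrm{loc}}$ and hence Lipschitz gradient). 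One small remark: in the statement ``bounded slope condition'' should be read as ``of some rank $K$'', since the uniformity of the constant $C$ in Direction 1 uses the global bound $|\zeta_y^\pm|\le K$, which the paper's Definition~\ref{definition_bsc} provides.
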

In this section, we  indicate how one can approximate a convex set \(\Omega\) by a sequence of smooth convex sets while preserving the bounded slope condition of a boundary map \(\varphi : \partial \Omega \to \mathbb{R}\). We first introduce some notation: given an open and bounded convex set $\Omega\subset\mathbb{R}^N$, 
we introduce the {\it normal cone at a point $x\in\partial\Omega$} by
\[
\mathcal{N}_\Omega(x)=\left\{\xi\in\mathbb{R}^N\, :\, \sup_{y\in\Omega}\,\langle \xi,y-x\rangle\le 0\rangle\right\}.
\]
We also set
\[
\mathcal{S}_\Omega(x)=\left\{\xi\in \mathcal{N}_\Omega(x)\, :\, |\xi|=1\right\}.
\]
Finally, given $x_0\in\Omega$ we introduce the {\it gauge function of $\Omega$ centered at $x_0$} by 
\[
j_{x_0,\Omega}(x)=\inf\{\lambda>0\, :\, x-x_0\in\lambda\,(\Omega-x_0)\}.
\]
This is a convex positively $1-$homogeneous function such that $j_{x_0,\Omega}(x_0)=0$ and  $j_{x_0,\Omega}\equiv 1$ on $\partial\Omega$. Moreover, this is a globally Lipschitz function, with Lipschitz constant given by
\[
\frac{1}{\mathrm{dist}(x_0,\partial\Omega)}.
\]
\vskip.2cm\noindent
We present a characterization of functions satisfying the bounded slope condition.
\begin{lm}
\label{lemmabsc}
 Let \(\Omega\subset\mathbb{R}^N\) be an open bounded convex set.
If \(\varphi: \partial\Omega \to \mathbb{R}\) satisfies the bounded slope condition of rank \(K\), then there exist two maps \(\varphi^- : \mathbb{R}^N\to \mathbb{R}\) , \(\varphi^+ : \mathbb{R}^N\to \mathbb{R}\) such that:
\begin{enumerate}
\item \(\varphi^-\) and \(\varphi^+\) are $(K+1)-$Lipschitz continuous, \(\varphi^-\) is convex, \(\varphi^+\) is concave and 
\[
\varphi^-|_{\partial \Omega} = \varphi^+|_{\partial \Omega} =\varphi;
\]
\item  \(\Omega=\{x\in \mathbb{R}^N: \varphi^+(x)>\varphi^-(x)\}\) , \(\partial \Omega = \{x\in \mathbb{R}^N :  \varphi^+(x)=\varphi^-(x) \}\) and there exists \(\beta_\Omega>0\) depending on $\Omega$ only (see Remark \ref{oss:beta} below) such that for every \(s>0\), 
\begin{equation}
\label{eq1797}
\{x\in \mathbb{R}^N : \varphi^-(x) \leq \varphi^+(x)+s\}\subset\overline{\{x\in\mathbb{R}^N\, :\, \mathrm{dist\,}(x,\Omega)\le \beta_\Omega\,s\}}.
\end{equation}
\end{enumerate} 
Conversely,  if a  convex map \(\varphi^-:\mathbb{R}^N\to \mathbb{R}\)  agrees with a concave map \(\varphi^{+}:\mathbb{R}^N \to \mathbb{R}\) on the boundary of  \(\Omega\), then \(\varphi:= {\varphi^-}_{|\partial \Omega} = {\varphi^+}_{|\partial \Omega} \) satisfies the  bounded slope condition of rank \(K\), where \(K\) is a common  Lipschitz rank for \(\varphi^-\) and \(\varphi^+\) on \(\overline{\Omega}\).
\end{lm}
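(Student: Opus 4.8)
The plan is to establish the two implications separately; the direct one carries the only genuinely new idea, the quantitative separation \eqref{eq1797}. For the direct implication, fix once and for all a point $x_0\in\Omega$, set $\rho:=\mathrm{dist}(x_0,\partial\Omega)>0$, and let $j:=j_{x_0,\Omega}$ be the gauge of $\Omega$ centered at $x_0$: it is convex, $(1/\rho)$-Lipschitz, equal to $1$ on $\partial\Omega$, strictly less than $1$ on $\Omega$ and strictly bigger than $1$ outside $\overline\Omega$. For each $y\in\partial\Omega$ fix $\zeta_y^-,\zeta_y^+\in\overline{B_K}$ as in Definition \ref{definition_bsc}, and consider the affine maps $a_y^\pm(x):=\varphi(y)+\langle\zeta_y^\pm,x-y\rangle$. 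The candidates are
\[
\varphi^-(x):=\sup_{y\in\partial\Omega}a_y^-(x)+\rho\,\big(j(x)-1\big),\qquad \varphi^+(x):=\inf_{y\in\partial\Omega}a_y^+(x)-\rho\,\big(j(x)-1\big).
\]
Without the correction $\pm\rho(j-1)$ one merely recovers the classical convex/concave extensions of $\varphi$; these agree with $\varphi$ on $\partial\Omega$ but in general fail to separate $\Omega$ from its complement (e.g.\ when $\varphi$ is affine). The point of the gauge term is precisely to force this separation while, being $1$-Lipschitz, worsening the Lipschitz rank only from $K$ to $K+1$.

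Property (1) is then checked directly: $\sup_y a_y^-$ is finite, convex and $K$-Lipschitz (supremum of affine maps with gradients in $\overline{B_K}$) and $\rho(j-1)$ is convex and $1$-Lipschitz, so $\varphi^-$ is convex and $(K+1)$-Lipschitz, and symmetrically $\varphi^+$ is concave and $(K+1)$-Lipschitz. On $\partial\Omega$ the bounded slope inequalities give $a_y^-\le\varphi\le a_{y'}^+$ together with $a_x^\pm(x)=\varphi(x)$, hence $\sup_y a_y^-=\inf_y a_y^+=\varphi$ there; since $j=1$ on $\partial\Omega$, this yields $\varphi^-|_{\partial\Omega}=\varphi^+|_{\partial\Omega}=\varphi$. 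Writing any $x\in\overline\Omega$ as a convex combination of points of $\partial\Omega$ and using that the $a_y^\pm$ are affine gives $\sup_y a_y^-\le\inf_{y'}a_{y'}^+$ on all of $\overline\Omega$; combined with $j<1$ on $\Omega$ this shows $\varphi^-<\sup_y a_y^-\le\inf_y a_y^+<\varphi^+$ on $\Omega$.

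The heart of the matter, and the step I expect to be the main obstacle, is the behaviour outside $\overline\Omega$ together with \eqref{eq1797}: one must separate $\Omega$ from its complement \emph{at a linear rate} without spoiling the $(K+1)$-Lipschitz bound. Given $x\notin\overline\Omega$, let $z\in\partial\Omega$ be the point where the half-line from $x_0$ through $x$ leaves $\Omega$, so that $x-x_0=j(x)\,(z-x_0)$ with $j(x)>1$. Keeping only the terms with index $y=z$ in the supremum and the infimum gives
\[
\varphi^-(x)-\varphi^+(x)\ \ge\ (j(x)-1)\,\langle\zeta_z^--\zeta_z^+,\,z-x_0\rangle+2\,\rho\,(j(x)-1).
\]
Now the bounded slope inequalities at $z$ say $\langle\zeta_z^--\zeta_z^+,w-z\rangle\le0$ for every $w\in\partial\Omega$, hence also for $w=x_0$, since $x_0$ is a convex combination of boundary points; therefore $\langle\zeta_z^--\zeta_z^+,z-x_0\rangle\ge0$ and $\varphi^-(x)-\varphi^+(x)\ge2\rho\,(j(x)-1)>0$. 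This sign is exactly what makes a $1$-Lipschitz correction sufficient: the convex and concave parts can never work against us. Since moreover $\mathrm{dist}(x,\Omega)\le|x-z|=(j(x)-1)\,|z-x_0|\le(j(x)-1)\,\mathrm{diam}(\Omega)$, we conclude that $\Omega=\{\varphi^+>\varphi^-\}$, $\partial\Omega=\{\varphi^+=\varphi^-\}$, and that $\varphi^-(x)\le\varphi^+(x)+s$ forces $j(x)-1\le s/(2\rho)$ and hence $\mathrm{dist}(x,\Omega)\le\beta_\Omega\,s$ with $\beta_\Omega=\mathrm{diam}(\Omega)/(2\rho)$, which is \eqref{eq1797} (here $\beta_\Omega$ depends on $\Omega$ alone once $x_0$ is fixed canonically in terms of $\Omega$).

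For the converse, let $K$ be a common Lipschitz rank of $\varphi^-,\varphi^+$ on $\overline\Omega$. The only point to settle is the elementary fact that a convex function on $\mathbb{R}^N$ which is $K$-Lipschitz on $\overline\Omega$ admits, at each $y\in\overline\Omega$, a (global) subgradient of norm $\le K$: this is immediate at interior points of $\Omega$ by the local Lipschitz bound, and follows in general by approximating $y$ by points $y_n\in\Omega$ and extracting a limit of the associated affine minorants, whose gradients lie in the compact ball $\overline{B_K}$. Applying this to $\varphi^-$ at $y\in\partial\Omega$ and restricting the resulting affine minorant to $\partial\Omega$ (where $\varphi^-=\varphi$) produces $\zeta_y^-\in\overline{B_K}$ for which the lower bounded slope inequality holds; applying the same to the convex function $-\varphi^+$ produces $\zeta_y^+\in\overline{B_K}$ for which the upper one holds. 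Hence $\varphi$ satisfies the bounded slope condition of rank $K$.
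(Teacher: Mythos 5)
Your proof is correct and follows essentially the same approach as the paper: the same gauge-corrected convex/concave envelopes $\varphi^\pm$, the same use of the gauge function to force linear separation outside $\overline{\Omega}$, and the same subgradient argument for the converse. One minor point where you are actually more careful than the paper: in the converse direction the paper simply picks $\zeta_y^-\in\partial\varphi^-(y)$ and asserts $|\zeta_y^-|\le K$, which is not automatic for an arbitrary choice of subgradient at a boundary point when $\varphi^-$ is only assumed $K$-Lipschitz on $\overline{\Omega}$; your approximation-from-the-interior argument is the right way to produce a subgradient in $\overline{B_K}$.
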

\begin{proof}
Assume first that \(\varphi: \partial\Omega \to \mathbb{R}\) satisfies the bounded slope condition of rank \(K\). 
For every \(y\in \partial \Omega\), there exist \(\zeta_{y}^-, \zeta_{y}^+\in\mathbb{R}^N\) as in Definition \ref{definition_bsc}. We then define
\[
\varphi^{-}_0(x)=\sup_{y\in \partial\Omega}\left[\varphi(y) + \langle\zeta_{y}^-, x-y \rangle\right ],\qquad \mbox{ and }\qquad
\varphi^{+}_0(x)=\inf_{y\in \partial\Omega}\left[\varphi(y) + \langle\zeta_{y}^+ , x-y \rangle\right].
\] 
Then \(\varphi^{-}_0\) and \(\varphi^{+}_0\) are $K-$Lipschitz continuous, the former is convex, the latter is concave and they agree with \(\varphi\) on \(\partial \Omega\). This implies that
\[
\varphi^+_0 \ge \varphi^-_0\ \mbox{ on }\Omega,\qquad \mbox{ and }\qquad \varphi^+_0 \le \varphi^-_0\ \mbox{ on }\mathbb{R}^N\setminus\Omega.
\]
If we now define
\[
\varphi^{-}(x)=\varphi^-_0(x)+\mathrm{dist}(x_0,\partial\Omega)\,\Big(j_{x_0,\Omega}(x)-1\Big),
\]
and
\[
\varphi^+(x)=\varphi^+_0(x)+\mathrm{dist}(x_0,\partial\Omega)\,\Big(1-j_{x_0,\Omega}(x)\Big),
\]
these two functions have the required properties, thanks to the properties of the gauge function (see Figure \ref{fig:cones} below).
\begin{figure}[h]
\includegraphics[scale=.3]{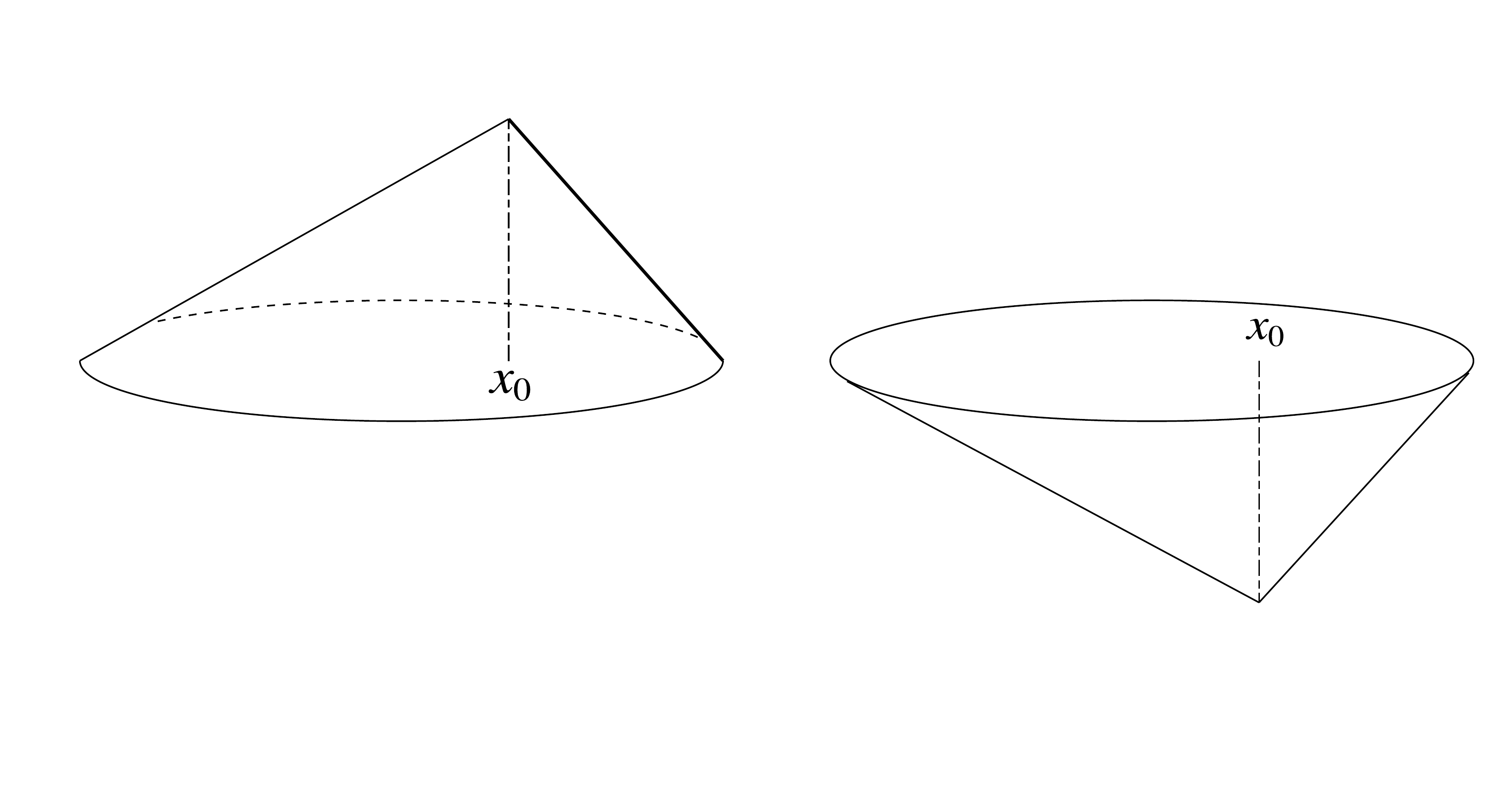}
\caption{The construction of the functions $\varphi^+$ and $\varphi^-$: we simply add the two cones to the functions $\varphi^+_0$ and $\varphi^-_0$, respectively.}
\label{fig:cones}
\end{figure}
\par
\vskip.2cm\noindent
We proceed to prove\footnote{We point out that the gauge functions are used  to guarantee property \eqref{eq1797}.} \eqref{eq1797}.  Let $x\in\mathbb{R}^N\setminus\Omega$ be such that
\[
\varphi^-(x)\le \varphi^+(x)+s,
\]
that is
\[
\varphi^-_0(x)+\mathrm{dist}(x_0,\partial\Omega)\,\Big(j_{x_0,\Omega}(x)-1\Big)\le \varphi^+_0(x)+\mathrm{dist}(x_0,\partial\Omega)\,\Big(1-j_{x_0,\Omega}(x)\Big)+s.
\]
Since on $\mathbb{R}^N\setminus\Omega$ we have $\varphi^-_0\ge \varphi^+_0$, then from the previous inequality we get
\[
\mathrm{dist}(x_0,\partial\Omega)\,\Big(j_{x_0,\Omega}(x)-1\Big)\le \mathrm{dist}(x_0,\partial\Omega)\,\Big(1-j_{x_0,\Omega}(x)\Big)+s,
\]
that is
\[
j_{x_0,\Omega}(x)\le 1+\frac{s}{2\,\mathrm{dist}(x_0,\partial \Omega)}.
\]
Then we observe
\[
\begin{split}
\mathrm{dist}(x,\partial\Omega)\le \left|\frac{x-x_0}{j_{x_0,\Omega}(x)}-(x-x_0)\right|&=\frac{|x-x_0|}{j_{x_0,\Omega}(x)}\, |1-j_{x_0,\Omega}(x)|\\
&\le \left(\max_{y\in\partial\Omega} |x_0-y|\right)\, \frac{s}{2\,\mathrm{dist}(x_0,\partial \Omega)}=:\beta_\Omega\, s,
\end{split}
\]
which gives \eqref{eq1797}.
\vskip.2cm\noindent
We now prove the converse. Assume that there exist a convex function \(\varphi^-:\mathbb{R}^N\to \mathbb{R}\) and a concave function \(\varphi^+:\mathbb{R}^N\to \mathbb{R}\) agreeing on \(\partial \Omega\) and let \(\varphi:=\varphi^-|_{\partial \Omega}=\varphi^+|_{\partial \Omega}\). For every \(y \in \partial \Omega\), let \(\zeta_{y}^-\in \partial \varphi^-(y)\) and  \(\zeta_{y}^+\in -\partial(-\varphi^+)(y)\). Then  for every \(x\in \partial \Omega\),
\[
\varphi(y) + \langle  \zeta_{y}^-, x-y \rangle \leq \varphi(x) \leq \varphi(y) + \langle \zeta_{y}^+, x-y \rangle.
\]
Moreover, if $\varphi^-$ and $\varphi^+$ are $K-$Lipschitz on \(\overline{\Omega}\), then \( |\zeta_{y}^-|\le K\)  and \( |\zeta_{y}^+|\le K\).
\end{proof}
\begin{oss}
\label{oss:beta}
In view of the above proof, in the previous result, one can take 
\[
\beta_\Omega=\frac{1}{2}\,\frac{\max_{y\in\partial \Omega} |x_0-y|}{\min_{y\in\partial\Omega}|x_0-y|}.
\]
By suitably choosing the point $x_0\in\Omega$, we can then suppose that $\beta_\Omega$ depends on $\Omega$ only through the ratio between its diameter and inradius (this quantity is sometimes called {\it eccentricity}).
\end{oss}
We proceed to describe the approximation of a bounded convex set by a sequence of smooth bounded convex sets that we will use in the sequel. Actually, the approximating sets can be chosen {\it uniformly} convex. 
\begin{lm}
\label{lemma_approximation_sets}
Let \(\Omega\subset\mathbb{R}^N\) be a convex bounded open set and let \(\varphi:\partial\Omega \to \mathbb{R}\) satisfy the bounded slope condition of rank \(K\). 
\par
Then there exists a sequence \(\{\Omega_k\}_{k\in\mathbb{N}}\subset \mathbb{R}^N\) of smooth bounded uniformly convex open sets, a sequence of $(K+1)-$Lipschitz functions \(\varphi_k:\mathbb{R}^N \to \mathbb{R}\) such that
\begin{enumerate}
\item[{\it i)}]  \(\Omega\Subset\Omega_k\) for every $k\in\mathbb{N}$ and we have the following
\[
\mathrm{diam}\,\Omega_k\leq \mathrm{diam}\,\Omega +\frac{8\,\beta_\Omega}{k},\qquad\qquad \lim_{k\to +\infty} |\Omega_k\setminus \Omega|=0,
\] 
and
\begin{equation}
\label{eq1728}
\lim_{k\to +\infty} \left[\max_{y\in\partial\Omega}\min_{y'\in \partial \Omega_k}|y- y'|\right]=0;
\end{equation}
\item[{\it ii)}]  \(\varphi_k|_{\partial \Omega_k}\) is smooth and  satisfies the bounded slope condition of rank \(K+2\);
\vskip.2cm
\item[{\it iii)}]  \(\varphi_k\) coincides with \(\varphi\) on \(\partial \Omega\). 
\end{enumerate}
 \end{lm}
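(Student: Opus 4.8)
The plan is to build the approximating sets by a two-step smoothing of $\Omega$, and the approximating boundary data by transplanting the convex/concave extensions provided by Lemma \ref{lemmabsc}. First I would fix the point $x_0\in\Omega$ realizing the inradius (so that $\beta_\Omega$ is controlled by the eccentricity as in Remark \ref{oss:beta}), and let $\varphi^-,\varphi^+:\mathbb{R}^N\to\mathbb{R}$ be the $(K+1)$-Lipschitz convex and concave extensions given by Lemma \ref{lemmabsc}, so that $\Omega=\{\varphi^+>\varphi^-\}$, $\partial\Omega=\{\varphi^+=\varphi^-\}$, and the sublevel estimate \eqref{eq1797} holds. The natural candidate for the enlarged domain is a sublevel set $\{\varphi^->\varphi^+ + \text{(small)}\}$-type object; more precisely I would first define $\widetilde{\Omega}_k$ as a slight dilation/neighborhood of $\Omega$ of size $\sim 1/k$, namely $\widetilde{\Omega}_k=\{x:\ \varphi^-(x)<\varphi^+(x)+1/k\}$, which by \eqref{eq1797} satisfies $\widetilde{\Omega}_k\subset\{\mathrm{dist}(\cdot,\Omega)\le\beta_\Omega/k\}$; this gives the diameter bound and $|\widetilde{\Omega}_k\setminus\Omega|\to 0$ and \eqref{eq1728} immediately. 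This set is convex (intersection of a convex sublevel set of $\varphi^--\varphi^+$, which is convex) and contains $\Omega$ compactly, but it is not smooth nor uniformly convex, so a second step is needed.

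For the second step I would smooth $\widetilde{\Omega}_k$ by the standard device of adding a small multiple of a strictly convex bump: replace the gauge function $j_{x_0,\widetilde{\Omega}_k}$ by $j_{x_0,\widetilde{\Omega}_k}(x)+\varepsilon_k\,|x-x_0|^2$ and mollify, then take a sublevel set slightly above $1$; for $\varepsilon_k\to 0$ chosen much smaller than $1/k$ this produces a smooth uniformly convex set $\Omega_k$ with $\widetilde{\Omega}_k\subset\Omega_k$ and $\mathrm{dist}(\partial\Omega_k,\partial\widetilde{\Omega}_k)$ as small as we like, so that the quantitative estimates in {\it i)} with the stated constant $8\beta_\Omega/k$ survive (absorbing the $\varepsilon_k$-error into the factor $8$ versus, say, $2\beta_\Omega$). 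Alternatively one can invoke a known approximation result for convex bodies by smooth uniformly convex ones in the Hausdorff metric and then just check the quantitative bounds; either way this part is routine. At this stage $\Omega\Subset\Omega_k$, $\Omega_k$ is smooth and uniformly convex, and all the metric assertions of {\it i)} hold.

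It remains to produce $\varphi_k$. Here I would {\it not} try to extend $\varphi$ from $\partial\Omega$ directly to $\partial\Omega_k$ by some geometric projection — that loses control on the slope — but instead use that $\varphi^-$ and $\varphi^+$ are already globally defined convex/concave functions agreeing exactly on $\partial\Omega$. Mollify them at scale $\delta_k\to0$ to get smooth $\varphi^-_{k}$ convex and $\varphi^+_{k}$ concave, still $(K+1)$-Lipschitz (mollification does not increase the Lipschitz constant) and converging uniformly; they no longer agree on $\partial\Omega_k$, but the gap $\varphi^-_{k}-\varphi^+_{k}$ is uniformly $O(\delta_k)$ there, and on $\Omega$ it is close to $\varphi^--\varphi^+$. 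I would then correct by the gauge-function trick exactly as in the proof of Lemma \ref{lemmabsc}: set
\[
\varphi_k^\pm(x)=\varphi^\pm_{k}(x)\mp c_k\bigl(j_{x_0,\Omega}(x)-1\bigr),
\]
with $c_k\to0$ chosen of order $\delta_k$, large enough to restore $\varphi_k^-\le\varphi_k^+$ on $\partial\Omega_k$ and $\varphi_k^-\ge\varphi_k^+$ outside a slightly larger set, small enough that the Lipschitz rank stays below $K+2$. Since $j_{x_0,\Omega}\equiv1$ on $\partial\Omega$, the correction vanishes there, so $\varphi_k^-=\varphi_k^+$ modulo an $O(\delta_k)$ mollification error on $\partial\Omega$; a final harmless affine/mollification adjustment (or simply redefining $\delta_k$ so that the mollified functions agree with $\varphi$ on the compact set $\partial\Omega$ up to reparametrization) yields $\varphi_k=\varphi$ on $\partial\Omega$, giving {\it iii)}. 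Applying the converse part of Lemma \ref{lemmabsc} on $\Omega_k$ to the pair $(\varphi_k^-,\varphi_k^+)$ — which are convex, concave, agree on $\partial\Omega_k$, and are $(K+2)$-Lipschitz on $\overline{\Omega_k}$ — shows $\varphi_k|_{\partial\Omega_k}$ satisfies the bounded slope condition of rank $K+2$; smoothness of $\partial\Omega_k$ together with Proposition \ref{teo:hartman} then upgrades $\varphi_k|_{\partial\Omega_k}$ to be smooth, proving {\it ii)}.

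The main obstacle is the simultaneous bookkeeping in the last step: one must choose the mollification scale $\delta_k$, the gauge-correction size $c_k$, and the smoothing parameter $\varepsilon_k$ of $\Omega_k$ in the right order of magnitude so that (a) the corrected convex/concave functions keep the sign relation $\varphi_k^-\lessgtr\varphi_k^+$ that characterizes $\partial\Omega_k$, (b) the Lipschitz rank does not exceed $K+2$, (c) the identity $\varphi_k=\varphi$ on $\partial\Omega$ is exact rather than approximate, and (d) all the quantitative limits in {\it i)} hold with the stated constants. Getting an \emph{exact} match with $\varphi$ on $\partial\Omega$ while using mollified data is the delicate point; it is resolved by noting that the correction term based on $j_{x_0,\Omega}$ vanishes precisely on $\partial\Omega$, so one only needs the mollification of $\varphi^\pm$ to be performed so as not to disturb the boundary values — e.g. by mollifying $\varphi^-$ and $-\varphi^+$ and then taking, on $\partial\Omega$, the common original value $\varphi$, which is legitimate because $\partial\Omega\Subset\Omega_k$ lies well inside the region where smoothness of $\varphi_k$ is needed only on $\partial\Omega_k$.
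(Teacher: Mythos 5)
Your overall strategy (mollify $\varphi^\pm$, inflate $\Omega$ by a sublevel-set construction, correct by gauge functions, apply the converse of Lemma \ref{lemmabsc}) shares the same ingredients as the paper, but it breaks down precisely at the two points you yourself flag as ``delicate,'' and the fixes you sketch do not actually work.

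First, the exact boundary match $\varphi_k=\varphi$ on $\partial\Omega$. Your correction term $\mp c_k\bigl(j_{x_0,\Omega}-1\bigr)$ vanishes identically on $\partial\Omega$, which is a nice feature, but it does nothing to repair the $O(\delta_k)$ error introduced by mollifying $\varphi^\pm$: the mollified functions simply do not equal $\varphi$ on $\partial\Omega$, and adding a term that is zero there cannot change this. The ``final harmless affine/mollification adjustment'' is exactly the missing idea, not a routine step. The paper resolves it with a trick you do not have: it never tries to make a mollified function hit $\varphi$ on $\partial\Omega$; instead it defines $\varphi_k=\max\{\varphi^+,\ \psi^-_k\}$ where $\psi^-_k=\varphi^-\ast\rho_{\varepsilon_k}-1/k$. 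The downward shift by $1/k$ forces $\psi^-_k<\varphi^-=\varphi=\varphi^+$ on $\partial\Omega$, so the max there equals $\varphi^+=\varphi$ exactly, while on $\partial\Omega_k$ one has $\varphi^+<\psi^-_k$ by the very definition of $\Omega_k$, so the max there equals the smooth $\psi^-_k$. Exact boundary match, smoothness where needed, and the Lipschitz constant is preserved because the max of two $(K+1)$-Lipschitz functions is $(K+1)$-Lipschitz.

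Second, the bounded slope condition on $\partial\Omega_k$. To apply the converse of Lemma \ref{lemmabsc} you need a convex map and a concave map that \emph{agree exactly} on $\partial\Omega_k$. You propose tuning a single scalar $c_k$ in the gauge correction so that $\varphi_k^-=\varphi_k^+$ on $\partial\Omega_k$; this is impossible, since a constant shift cannot force equality of two given functions on a whole hypersurface unless their gap was already constant there. The paper avoids the issue by \emph{constructing} $\Omega_k$ as the sublevel set
\[
\Omega_k=\Bigl\{\psi^+_k+\alpha_k>\psi^-_k+\tfrac{1}{2k\,\mathrm{diam}(\Omega)^2}|x-x_0|^2\Bigr\},
\]
so that on $\partial\Omega_k$ the smooth convex map $\psi^-_k$ coincides \emph{by definition} with the smooth concave map $\psi^+_k+\alpha_k-\tfrac{1}{2k\,\mathrm{diam}(\Omega)^2}|x-x_0|^2$, and the converse of Lemma \ref{lemmabsc} applies directly to this pair. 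The quadratic term does double duty: it makes the defining function uniformly convex (hence $\Omega_k$ uniformly convex) and supplies the concave partner. Your two-step construction of $\Omega_k$ (inflate, then separately smooth) decouples the domain from the convex/concave pair and thereby loses this coincidence on $\partial\Omega_k$, which is exactly why you are stuck trying to re-create it after the fact.

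In short: the domain approximation and the metric estimates in {\it i)} you could push through along your lines, but {\it ii)} and {\it iii)} rely on the coupled construction of $\Omega_k$ and the $\max$ trick for $\varphi_k$, neither of which appears in your proposal.
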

\begin{proof}
Let \(\varphi^-, \varphi^+:\mathbb{R}^N\to \mathbb{R}\) be the two functions given by Lemma \ref{lemmabsc}. We consider \(\{\rho_{\varepsilon}\}_{\varepsilon>0}\subset C^\infty_0(B_\varepsilon)\) a sequence of standard mollifiers.
We define
\[
\psi^{-}_k:= \varphi^{-}\ast\rho_{\varepsilon_k}-\frac{1}{k}, \quad \mbox{ and }\quad \psi^{+}_k:= \varphi^{+}\ast\rho_{\varepsilon_k}+\frac{1}{k},\qquad \mbox{ where }\ \varepsilon_k=\frac{1}{2\,(K+1)\,k}.
\]
Of course, the first function is convex, while the second is concave.
Since \(\varphi^-, \varphi^+\) are $(K+1)-$Lipschitz, for every \(x\in \mathbb{R}^N\),
\begin{equation}\label{eq1874}
\psi^{-}_k(x) +  \frac{1}{2k}\leq \varphi^-(x) \leq \psi^{-}_k(x) +\frac{3}{2k}\quad \mbox{ and } \quad  \psi^{+}_k(x) -\frac{3}{2k}\leq  \varphi^{+}(x)\leq \psi^{+}_k(x)-\frac{1}{2k}.
\end{equation}
Let \(x_0\in \Omega\).
By Sard Lemma, for every $k\in\mathbb{N}\setminus\{0\}$ there exists \(\alpha_k\in (0,1/k)\) such that the set
\[
\Omega_k:=\left\{x\in \mathbb{R}^N\,:\, \psi^{+}_k(x) +\alpha_k>  \psi_{k}^{-}(x)+\frac{1}{2k\,\mathrm{diam}(\Omega)^2}\, |x-x_0|^2\right\},
\] 
is smooth. This set is uniformly convex, since the function
\[
x\mapsto \psi_{k}^{-}(x)-\psi^{+}_k(x)+\frac{1}{2k\,\mathrm{diam}(\Omega)^2}\, |x-x_0|^2,
\]
is uniformly convex. 
By \eqref{eq1874} and the fact that $\varphi^+\ge \varphi^-$ on $\overline\Omega$, we have for every $x\in\overline\Omega$
 \[
\psi^{-}_k(x)+\frac{1}{2k\,\mathrm{diam}(\Omega)^2}\, |x-x_0|^2\le \psi^+_k(x)-\frac{1}{k}+\frac{1}{2k\,\mathrm{diam}(\Omega)^2}\, |x-x_0|^2\le 
\psi^{+}_k (x),
\] 
and thus \(\overline{\Omega}\subset \Omega_k\).
Moreover, 
\begin{equation}
\label{eq1862}
\Omega_k\subset \left\{x\in \mathbb{R}^N\, :\, \varphi^-(x) < \varphi^+(x) + \frac{4}{k}\right\}.
\end{equation}
Hence, by using property \eqref{eq1797} of Lemma \ref{lemmabsc} with $s=4/k$ we have
\[
\mathrm{ diam }\, \Omega_k\leq \mathrm{ diam }\,\Omega + \frac{8\,\beta_\Omega}{k},
\] 
where $\beta_\Omega$ is the same constant as before.
We now estimate \(|\Omega_k\setminus \Omega|\). 
By \eqref{eq1862},
\[
\Omega_k\setminus \Omega \subset  \left\{x\in \mathbb{R}^N\, :\, \varphi^+(x)\leq \varphi^-(x) < \varphi^+(x) + \frac{4}{k}\right\}.
\]
It follows that
\[
\limsup_{k\to +\infty} |\Omega_k\setminus \Omega|\leq |\{x\in \mathbb{R}^N : \varphi^+(x)=\varphi^-(x)\}| = |\partial \Omega|,
\]
and the last quantity is $0$, by convexity of $\Omega$. Hence, \(\lim_{k\to +\infty}|\Omega_k\setminus \Omega|=0 \) as desired.
\vskip.2cm\noindent
In order to prove \eqref{eq1728}, let us argue by contradiction. Then there exist $r>0$, a subsequence $\{k_n\}_{n\in\mathbb{N}}$ diverging to $\infty$ and $y_n\in\partial\Omega$ such that for every $n\in\mathbb{N}$
\[
\min_{y' \in \partial \Omega_{k_n}}|y_n-y'|\ge r,
\]
that is, such that for every $n\in\mathbb{N}$
\[
|y_n-y'|\ge r,\qquad \mbox{ for every } y'\in\partial\Omega_{k_n}.
\]
The previous implies that $B_r(y_n)\subset \Omega_{k_n}$ for every $n\in\mathbb{N}$, thus we have\footnote{We denote by $\omega_N$ the volume of the $N-$dimensional ball of radius $1$.}
\[
|\Omega_{n_k}\setminus\Omega|\ge |B_r(y_n)\setminus \Omega|\ge \frac{\omega_N}{2}\,r^N.
\]
where in the last estimate we used the convexity of $\Omega$. 
The previous estimate clearly contradicts the fact that $|\Omega_k\setminus \Omega|$ converges to $0$.
\vskip.2cm\noindent
We now define \(\varphi_k\) by
\[
\varphi_k=\max \{\varphi^+,\, \psi^{-}_k\}.
\]
Of course, \(\varphi_k\) is $(K+1)-$Lipschitz on \(\mathbb{R}^N\).
Since \(\varphi^+=\varphi=\varphi^-\geq \psi^{-}_k\) on \(\partial \Omega\), we have \(\varphi_k = \varphi\) on \(\partial \Omega\). For \(x\in\partial \Omega_k\), we use the fact that \[
\varphi^+(x)\leq \psi^{+}_k(x)-\frac{1}{2k}=\psi^{-}_k(x)-\alpha_k +\frac{1}{2k\,\mathrm{diam}(\Omega)^2}\,|x-x_0|^2 -\frac{1}{2k} < \psi^{-}_k(x),
\]
to get \(\varphi_k= \psi^{-}_k\) on $\partial\Omega_k$. In particular, \(\varphi_k|_{\partial \Omega_k}\) is smooth and satisfies  the bounded slope condition of rank \(K+2\) for \(k\) sufficiently large. Indeed, we have
\[
\varphi_k|_{\partial \Omega_k}=\psi^-_k|_{\partial\Omega_k}=\left(\psi^{+}_k+\alpha_k-\frac{1}{2k\,\mathrm{diam}(\Omega)^2}\,|x-x_0|^2\right)|_{\partial\Omega_k},
\]
thus can we use the second part of Lemma \ref{lemmabsc} with the last two functions. This completes the proof.
\end{proof}
\begin{oss}
Since $\Omega_k\supset\Omega$, it is not difficult to see that \eqref{eq1728} is equivalent to say that $\Omega_k$ is converging to $\Omega$ with respect to the {\it Hausdorff metric}. Indeed, we recall that for every $E_1,E_2\subset\mathbb{R}^N$ the latter is given by
\[
d_H(E_1,E_2)=\max\left\{\sup_{x\in E_1} \mathrm{dist\,}(x, E_2),\, \sup_{y\in E_2} \mathrm{dist}(y, E_1)\right\}.
\]
If $E_2\subset E_1$, the previous reduces to
\[
d_H(E_1,E_2)=\sup_{x\in E_1} \mathrm{dist\,}(x, E_2)=\sup_{x\in E_1} \inf_{x'\in E_2} |x-x'|.
\]
Moreover, if both $E_1$ and $E_2$ are bounded and convex, we can equivalently perform the $\sup/\inf$ on their respective boundaries. 
\end{oss}

\subsection{Some auxiliary regularity results}

The following standard result will be used in the sequel.
\begin{teo}
\label{teo:stampacchia}
Let \(\mathcal{O} \subset \mathbb{R}^N\) be a smooth bounded uniformly convex open set, \(H:\mathbb{R}^N\to \mathbb{R}\) a smooth uniformly convex function and \(h:\mathcal{O}\to \mathbb{R}\) a smooth bounded function. Let \(\psi:\partial\mathcal{O} \to \mathbb{R}\) be a smooth map. Then there exists a unique minimizer \(u\) of 
\[
v\mapsto \int_{\mathcal{O}} \Big[H(\nabla v)+h\, v\Big]\, dx,
\] 
on \(\psi+W^{1,2}_0(\mathcal{O})\). Moreover, $u$ is smooth on \(\overline{\mathcal{O}}\).
\end{teo}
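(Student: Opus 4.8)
The plan is to combine the direct method of the Calculus of Variations (for existence and uniqueness) with the classical regularity theory for uniformly elliptic quasilinear equations with smooth data (for the smoothness of the minimizer).

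\emph{Existence and uniqueness.} Since $H$ is smooth and uniformly convex, it has at least quadratic growth from below: there are $c>0$ and $C\ge 0$ with $H(z)\ge c\,|z|^2-C$ for every $z\in\mathbb{R}^N$. Fixing a $W^{1,2}$ extension of $\psi$ to $\mathcal O$ and applying Poincar\'e's inequality on $\psi+W^{1,2}_0(\mathcal O)$, one checks that $v\mapsto \int_{\mathcal O}[H(\nabla v)+h\,v]\,dx$ is coercive on this affine space and bounded from below; convexity of $H$ and boundedness of $h$ make it sequentially weakly lower semicontinuous on $W^{1,2}$, so the direct method yields a minimizer $u$. If $u_1,u_2$ were two minimizers, strict convexity of $z\mapsto H(z)$ would force $\nabla u_1=\nabla u_2$ a.e.\ (otherwise $(u_1+u_2)/2$ has strictly smaller energy); since $u_1-u_2\in W^{1,2}_0(\mathcal O)$ this gives $u_1=u_2$.

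\emph{Euler--Lagrange equation, $L^\infty$ bound, and boundary gradient bound.} The minimizer is the unique weak solution of $\operatorname{div}\big(\nabla H(\nabla u)\big)=h$ in $\mathcal O$ with $u-\psi\in W^{1,2}_0(\mathcal O)$. A De Giorgi truncation argument (or comparison with large constants, the term $h$ being bounded) gives $\|u\|_{L^\infty(\mathcal O)}\le C$. For the boundary gradient, we use that $\mathcal O$ is smooth uniformly convex and $\psi$ smooth, so by Proposition~\ref{teo:hartman} the trace $\psi|_{\partial\mathcal O}$ satisfies the bounded slope condition of some rank $K$. For each $y\in\partial\mathcal O$ we compare $u$, near $\partial\mathcal O$, with the affine maps $x\mapsto \psi(y)+\langle\zeta_y^\pm,x-y\rangle$ corrected by a small multiple of a fixed barrier built from the gauge function $j_{x_0,\mathcal O}$ (or a suitable power of $j_{x_0,\mathcal O}-1$): uniform ellipticity of $D^2H$ together with $\|h\|_{L^\infty}<\infty$ allows one to tune the correction so that the corrected maps are respectively super- and subsolutions. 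The comparison principle for the monotone operator $\operatorname{div}\big(\nabla H(\nabla\cdot)\big)$ (in which $h$ cancels) then yields $|u(x)-u(y)|\le C|x-y|$ for all $x\in\mathcal O$, $y\in\partial\mathcal O$, i.e.\ a bound on $\nabla u$ on $\partial\mathcal O$.

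\emph{Interior gradient bound and smoothness.} Difference quotients and De Giorgi--Nash--Moser theory applied to the (now uniformly elliptic, with locally bounded gradient) Euler--Lagrange equation give $u\in W^{2,2}_{\mathrm{loc}}\cap C^{1,\alpha}_{\mathrm{loc}}(\mathcal O)$; then a Bernstein-type argument, using that the components $\partial_k u$ solve $\operatorname{div}\big(D^2H(\nabla u)\,\nabla\partial_k u\big)=\partial_k h$ and that $h$ is smooth, produces a local maximum estimate for $|\nabla u|^2$ that, combined with the previous step, gives $\|\nabla u\|_{L^\infty(\mathcal O)}\le \|\nabla u\|_{L^\infty(\partial\mathcal O)}+C\le C$. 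Once $\nabla u\in L^\infty(\mathcal O)$, the equation is uniformly elliptic with smooth structure, the boundary is smooth and $\psi$ is smooth, so De Giorgi--Nash gives $u\in C^{1,\alpha}(\overline{\mathcal O})$ and iterating linear Schauder estimates on the frozen-coefficient equations bootstraps $u$ to $C^\infty(\overline{\mathcal O})$.

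The delicate step is the global Lipschitz estimate: constructing boundary barriers that absorb the lower-order term $h$ (relying on the bounded slope condition furnished by Proposition~\ref{teo:hartman} and on uniform ellipticity), and then transferring the boundary gradient bound to the interior; the remaining ingredients are standard once a uniform $W^{1,\infty}$ bound is at hand. Since this statement is only an auxiliary tool used with $H$ and the data fully smooth, one may alternatively quote directly the classical regularity theory for quasilinear uniformly elliptic equations (e.g.\ Ladyzhenskaya--Ural'tseva or Gilbarg--Trudinger, Ch.~15).
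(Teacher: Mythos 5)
Both you and the paper use the same ingredients — the bounded slope condition via Proposition~\ref{teo:hartman}, barriers, De~Giorgi--Nash--Moser, Schauder — but in a crucially different order, and the order matters here. The paper first invokes Stampacchia to obtain a Lipschitz minimizer $u$ of the problem posed over $\psi + W^{1,\infty}_0(\mathcal{O})$; only once $\nabla u\in L^\infty$ is secured does it write the Euler--Lagrange equation, extend it to $W^{1,1}_0$ test functions (legitimate because $\nabla H(\nabla u)$ and $h$ are then bounded), and conclude by convexity that $u$ also minimizes over $\psi+W^{1,2}_0(\mathcal{O})$. You instead obtain a minimizer over $\psi+W^{1,2}_0(\mathcal{O})$ by the direct method and then assert at once that it is the unique weak solution of $\operatorname{div}(\nabla H(\nabla u))=h$. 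That assertion is the gap: $H$ is smooth and uniformly convex, which gives a quadratic \emph{lower} bound, but no upper growth bound is assumed — $D^2 H$ may grow arbitrarily fast. For a $W^{1,2}$ minimizer of such a functional it is not automatic that $\nabla H(\nabla u)\in L^1_{\mathrm{loc}}$, hence not automatic that the outer variation $t\mapsto \int_{\mathcal{O}} \big[H(\nabla u+t\nabla\varphi)+h\,(u+t\varphi)\big]\,dx$ is finite and differentiable at $t=0$, so the Euler--Lagrange equation is not free. Your subsequent steps (comparison principle for the operator, difference quotients, Bernstein estimate) all lean on this unjustified equation, and the Lipschitz bound is exactly what would justify it, so as written the argument is circular.

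The remedy is to establish the Lipschitz bound at the energy level, not through the equation. Compare $u$ with $\max\{u,\psi^-_y\}$ and $\min\{u,\psi^+_y\}$ and integrate by parts the \emph{smooth barrier} only, exactly as in the paper's Proposition~\ref{lm:barrier}; for the interior, compare $u$ with its translates rather than linearizing the equation, as in Proposition~\ref{lemuniflipbd}. Alternatively, quote Stampacchia (or the quasilinear existence theory) outright, which is what the paper does — but note that your suggestion to ``quote the classical theory'' still needs the final bridge: the classical theory produces a smooth solution, and one must then verify (via the Euler--Lagrange equation, now justified by Lipschitzness, together with convexity) that this solution is the minimizer over $\psi+W^{1,2}_0(\mathcal{O})$, which is exactly the paper's extension to $W^{1,1}_0$ test functions. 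Once $u$ is known to be Lipschitz, your closing steps — $W^{2,2}$ by difference quotients, $C^{1,\alpha}(\overline{\mathcal{O}})$ by De~Giorgi--Nash--Moser on the differentiated equation, then $C^\infty(\overline{\mathcal{O}})$ by Schauder — coincide with the paper's and are correct.
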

\begin{proof} 
We first observe that by Proposition \ref{teo:hartman}, the function $\psi$ satisfies the bounded slope condition. Then by \cite[Theorem 9.2]{St}, we have that the problem
\[
\min\left\{\int_{\mathcal{O}} H(\nabla v)\, dx+\int_{\mathcal{O}} h\, v\, dx\, :\, v-\psi\in W^{1,\infty}_0(\mathcal{O})\right\},
\]
admits a solution $u\in W^{1,\infty}(\mathcal{O})$, which is also unique by strict convexity of the Lagrangian in the gradient variable. Then $u$ satisfies the Euler-Lagrange equation
\begin{equation}
\label{EL_stampacchia}
\int_{\mathcal{O}} \langle \nabla H(\nabla u),\nabla \varphi\rangle\, dx+\int_{\mathcal{O}} h\, \varphi\, dx=0,\qquad \mbox{ for every } \varphi\in W^{1,\infty}_0(\mathcal{O}).
\end{equation}
Thanks to the hypotheses on $H$ and $h$ and to the fact that $\nabla u\in L^\infty(\mathcal{O})$, equation \eqref{EL_stampacchia} still holds with test functions $\varphi\in W^{1,1}_0(\mathcal{O})$. Thus by convexity, $u$ solves the original problem as well.
\par
By using \eqref{EL_stampacchia}, a standard difference quotient argument and the Lipschitz continuity of \(u\), one can prove that \(u\in W^{2,2}(\mathcal{O})\) and that any partial derivative \(u_{x_i}\) is a solution of a uniformly elliptic equation with bounded measurable coefficients, i.e.
\[
-\mathrm{div}\left(D^2 H(\nabla u) \nabla u_{x_i}\right)=h_{x_i}.
\] 
By the De Giorgi-Nash-Moser Theorem, it thus follows that \(u_{x_i}\in C^{0,\alpha}(\overline{\mathcal{O}})\). By appealing to Schauder theory, this implies that \(u\) is smooth on \(\overline{\mathcal{O}}\), see \cite[Theorem 9.19]{GT}. 
\end{proof}
Finally, in the proof of the Main Theorem we will also need the following simple result.
\begin{lm}[Propagation of regularity]
\label{propagationofregularity} Under the assumptions of the Main Theorem, if  \(u_1\) and \(u_2\) are two solutions of \eqref{minimisation}, then 
\[
|\nabla u_1-\nabla u_2|\le 2\,R,\qquad \mbox{ almost everywhere in }\Omega.
\]
\end{lm}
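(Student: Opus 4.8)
\textbf{Proof proposal for Lemma \ref{propagationofregularity}.}

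The plan is to exploit the fact that $\mathcal{F}$ is convex but fails to be \emph{strictly} convex only inside the ball $B_R$, while the $\Phi$-uniform convexity of $F$ outside $B_R$ (which, via \eqref{superlinear}, gives genuine strict convexity of $F$ along any segment disjoint from $B_R$) forces the two gradients to agree wherever one of them lies well outside $B_R$. The key algebraic input is the following consequence of convexity: if $u_1$ and $u_2$ are both minimizers, then so is the midpoint $w=(u_1+u_2)/2$, since $\mathcal{F}(w)\le \tfrac12\mathcal{F}(u_1)+\tfrac12\mathcal{F}(u_2)=\inf\mathcal{F}$. Writing this out, and using that $F$ is convex and the linear term $\int f\,u$ splits exactly in half, we obtain
\[
\int_\Omega \left[\tfrac12 F(\nabla u_1)+\tfrac12 F(\nabla u_2)-F\Big(\tfrac{\nabla u_1+\nabla u_2}{2}\Big)\right]dx=0,
\]
and since the integrand is pointwise nonnegative by convexity of $F$, it must vanish for a.e.\ $x\in\Omega$.

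Next I would analyze the pointwise identity
\[
F(\nabla u_1(x))+F(\nabla u_2(x))=2\,F\Big(\tfrac{\nabla u_1(x)+\nabla u_2(x)}{2}\Big),
\qquad\text{a.e. }x\in\Omega,
\]
and argue that it is incompatible with $|\nabla u_1(x)-\nabla u_2(x)|>2R$. Indeed, suppose at some such point we had $|\nabla u_1-\nabla u_2|>2R$; I claim the segment $[\nabla u_1,\nabla u_2]$ cannot stay inside $\overline{B_R}$, because any segment contained in $\overline{B_R}$ has length at most $2R$. Hence the midpoint of that segment and at least one of the endpoints can be connected by a subsegment that does not meet $B_R$; more carefully, the segment $[\nabla u_1,\nabla u_2]$ must contain a nondegenerate subsegment $[z,z']$ with $[z,z']\cap B_R=\emptyset$ on which $F$ is strictly convex (using \eqref{equc} together with $\Phi>0$). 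But the midpoint identity above, combined with the convexity of $F$ on all of $\mathbb{R}^N$, forces $F$ to be \emph{affine} on the whole segment $[\nabla u_1,\nabla u_2]$ (equality in Jensen for a convex function along a chord is equivalent to affinity on that chord). Affinity on $[z,z']$ contradicts the strict inequality in \eqref{equc} for $\theta\in(0,1)$, since $\Phi(|z|+|z'|)|z-z'|^2>0$. This contradiction yields $|\nabla u_1(x)-\nabla u_2(x)|\le 2R$ at a.e.\ point.

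The main obstacle, and the only slightly delicate point, is the elementary convexity lemma that equality in the midpoint Jensen inequality for a convex function $F:\mathbb{R}^N\to\mathbb{R}$ at two points $z,z'$ implies $F$ is affine on the whole segment $[z,z']$: one reduces to the one-variable convex function $t\mapsto F(z+t(z'-z))$ on $[0,1]$, whose graph, lying below its chord and meeting it at the two endpoints and the midpoint, must coincide with the chord throughout. Once this is in hand, the rest is a matter of book-keeping: noting that a segment of length strictly greater than $2R$ cannot be contained in $\overline{B_R}$, so it admits a subsegment disjoint from $B_R$ on which \eqref{equc} applies with $\Phi>0$, giving the required strict convexity and hence the contradiction. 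Since the statement is "almost everywhere", no regularity of $\Omega$ or of the level set $\{|\nabla u_i|\le R\}$ is needed — the argument is entirely pointwise once the midpoint minimality is established.
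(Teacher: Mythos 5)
Your proposal is correct and follows essentially the same route as the paper: both observe that the midpoint $(u_1+u_2)/2$ is also a minimizer, derive the pointwise Jensen equality for $F$ at $\nabla u_1$ and $\nabla u_2$, and use the strict convexity of $F$ on segments avoiding $B_R$ to bound $|\nabla u_1-\nabla u_2|$. You simply spell out more carefully the standard fact that equality in the midpoint Jensen inequality forces $F$ to be affine along the chord, and you phrase the endgame contrapositively (length $>2R$ forces a nondegenerate subsegment outside $\overline{B_R}$), which if anything is slightly cleaner than the paper's dichotomy.
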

\begin{proof}
From the identity \(\mathcal{F}(u_1)=\mathcal{F}(u_2)\) and the convexity of \(\mathcal{F}\) we easily infer that 
\[
\mathcal{F}(u_1)=\mathcal{F}\left(\frac{u_1+u_2}{2}\right).
\]
Hence, for almost every \(x \in \Omega\), 
\[
F\left(\frac{\nabla u_1(x)+\nabla u_2(x)}{2}\right) = \frac{1}{2}\,F(\nabla u_1(x)) + \frac{1}{2}\, F(\nabla u_2(x)).
\]
Since \(F\) is strictly convex outside \(B_R\), this implies that either \(\nabla u_1(x)= \nabla u_2(x)\) or \(\nabla u_1(x)\) and \(\nabla u_2(x)\) both belong to \(B_R\). In any case, \(|\nabla u_1(x) - \nabla u_2(x)|\leq 2\,R\) for almost every $x\in\Omega$. This concludes the proof.
\end{proof}

\section{Existence of minimizers}
\label{sec:3}

In this section, we prove that the minimum in \eqref{minimisation} is attained, under the standing assumptions. We begin by remarking a consequence of $\Phi-$uniform convexity.
\begin{oss}
Observe that if $F$ is $\Phi-$uniformly convex outside $B_R$, then for every $\xi, \xi'\in \mathbb{R}^N$ such that the segment \([\xi, \xi']\) does not intersect \(B_R\) and every $\zeta\in\partial F(\xi)$, we also have
\begin{equation}
\label{equc_bis}
F(\xi')\ge F(\xi)+\langle \zeta,\xi'-\xi\rangle+\frac{1}{2}\,\Phi(|\xi|+|\xi'|)\, |\xi-\xi'|^2.
\end{equation}
Indeed, from \eqref{equc} we get for $0<\theta<1$
\[
F(\xi +\theta\,(\xi'-\xi))-F(\xi)\leq \theta\, \Big(F(\xi')-F(\xi)\Big)-\frac{1}{2}\,\theta\,(1-\theta)\,\Phi(|\xi|+|\xi'|)\,|\xi-\xi'|^2,
\]
then if $\zeta\in \partial F(\xi)$, by convexity of $F$ we have
\[
\theta\,\langle \zeta,\xi'-\xi\rangle\le \theta\, \Big(F(\xi')-F(\xi)\Big)-\frac{1}{2}\,\theta\,(1-\theta)\,\Phi(|\xi|+|\xi'|)\,|\xi-\xi'|^2.
\]
By dividing by $\theta$ and taking the limit as $\theta$ goes to $0$, we get \eqref{equc_bis}.
\end{oss}
We can now establish that $F$ is superlinear.
\begin{lm}
\label{lm:superlinear}
Let $F:\mathbb{R}^N\to\mathbb{R}$ be a continuous function, such that \(F\) is $\Phi-$uniformly convex outside the ball \(B_R\). Then $F$ is superlinear.
\end{lm}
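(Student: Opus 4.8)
The goal is to show that $\lim_{|z|\to\infty} F(z)/|z| = +\infty$. The plan is to exploit the inequality \eqref{equc_bis} along rays emanating from a fixed point on the sphere $\partial B_R$, where the segments automatically avoid the open ball $B_R$. Concretely, I would fix a unit vector $e$ and, for each $z$ with $|z|>R$, consider the two points $\xi = R\,e'$ and $\xi' = z$, where $e' = z/|z|$ so that the segment $[\xi,\xi']$ is a radial segment lying outside $B_R$ (it touches $\partial B_R$ only at the endpoint $\xi$). Pick any $\zeta\in\partial F(\xi)$ — such a subgradient exists and is bounded by a constant depending only on $F$ and $R$, since $F$ is convex and finite, hence locally Lipschitz, so $|\zeta|\le C_R$ for all $\xi\in\partial B_R$ by compactness. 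Then \eqref{equc_bis} gives
\[
F(z) \ge F(\xi) + \langle \zeta, z-\xi\rangle + \frac{1}{2}\,\Phi\big(R + |z|\big)\,|z-\xi|^2.
\]
Since $|F(\xi)|$ and $|\zeta|$ are bounded uniformly for $\xi\in\partial B_R$, and $|z-\xi| = |z|-R$, dividing by $|z|$ yields
\[
\frac{F(z)}{|z|} \ge -\frac{C_R + C_R\,(|z|+R)}{|z|} + \frac{(|z|-R)^2}{2\,|z|}\,\Phi\big(R+|z|\big).
\]

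The first term on the right is bounded below (it tends to $-C_R$ as $|z|\to\infty$), so it suffices to show the second term tends to $+\infty$. For $|z|\ge 2R$ one has $(|z|-R)^2/|z| \ge |z|/4 \ge (R+|z|)/8$ (using $R+|z|\le 2|z|\le \ldots$; in any case a crude comparison suffices), so the second term is bounded below by $\tfrac{1}{16}\,(R+|z|)\,\Phi(R+|z|)$, which diverges to $+\infty$ by the superlinearity hypothesis \eqref{superlinear} on $\Phi$, namely $\lim_{t\to+\infty} t\,\Phi(t)=+\infty$. Combining, $\liminf_{|z|\to\infty} F(z)/|z| = +\infty$, which is exactly superlinearity of $F$.

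The only mildly delicate point is making sure the subgradient $\zeta$ and the value $F(\xi)$ are controlled uniformly in the base point $\xi\in\partial B_R$; this is immediate from convexity of $F$ on all of $\mathbb{R}^N$ (so $F$ is locally Lipschitz, and $\partial B_R$ is compact). Everything else is an elementary estimate, and I expect no real obstacle. One should also remember to state the conclusion in the form actually needed later — a quantitative lower bound of the type $F(z)\ge \Psi(|z|)\,|z|$ for some function $\Psi$ with $\Psi(t)\to\infty$ — since the constant $C_R$ depends only on $F$ and $R$, the displayed inequality above already provides such a bound.
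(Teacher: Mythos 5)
Your proof is correct and follows essentially the same route as the paper: fix a base point on a sphere (you use $\partial B_R$, the paper uses $\partial B_{2R}$), apply \eqref{equc_bis} along the radial segment out to $z$, control $F$ and the subgradient on the compact sphere via local Lipschitz continuity of the convex function $F$, and invoke the superlinearity hypothesis \eqref{superlinear} on $t\mapsto t\,\Phi(t)$. The only (inessential) difference is the choice of base sphere; both work since the radial segment meets $B_R$ at most in a single boundary point.
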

\begin{proof}
We recall that by hypothesis $F$ satisfies \eqref{equc_bis}, i.e. for every $\xi,\xi'\in\mathbb{R}^N$ such that $[\xi,\xi']$ does not intersect $B_R$, we have
\[
F(\xi)\ge F(\xi')+\langle \zeta,\xi-\xi'\rangle+\frac{1}{2}\,\Phi(|\xi|+|\xi'|)\, |\xi-\xi'|^2,
\]
where $\zeta\in \partial F(\xi')$. Let $\xi\in\mathbb{R}^N\setminus B_{2\,R}$ and $\xi'=2\,R\,\xi/|\xi|$, by using the previous property, Cauchy-Schwarz inequality and continuity of $F$ we get
\[
\frac{F(\xi)}{|\xi|}\ge \frac{1}{|\xi|}\, \left[\min_{\omega\in \mathbb{S}^{N-1}}\, F(2\,R\,\omega)-|\zeta|\,(|\xi|-2\,R)\right]+\frac{1}{2}\,\Phi(|\xi|+2\,R)\, \frac{(|\xi|-2\,R)^2}{|\xi|},
\]
where the norm of $\zeta\in\partial F(2\,R\,\xi/|\xi|)$ can be bounded by \(\|\nabla F\|_{L^{\infty}(B_{2R})}\).
If we now use the assumption \eqref{superlinear} on $\Phi$, from the previous estimate we get that
\[
\lim_{|\xi|\to +\infty} \frac{F(\xi)}{|\xi|}=+\infty,
\]
which is the desired conclusion.
\end{proof}
We now prove an existence result for a problem having a slightly more general form than \eqref{minimisation}, namely we consider  functionals of the form 
\[
\mathcal{F}(u)=\int_{\Omega} \Big[F(\nabla u) + G(x,u)\Big] \,dx,
\]
where \(G:\Omega \times \mathbb{R} \to \mathbb{R}\) is a measurable function which satisfies the following assumption: there exist two functions $g_1\in L^N(\Omega)$ and $g_2\in L^1(\Omega)$ such that 
\begin{equation}
\label{Gip}
G(x,u)\ge -|u|\, |g_1(x)|-|g_2(x)|,\qquad \mbox{ for a.\,e. } x\in \Omega \mbox{ and every } u\in \mathbb{R}.  
\end{equation}
We further assume that there exists \(u_*\in \varphi + W^{1,1}_0(\Omega)\) such that
\begin{equation}
\label{Gip_bis}
\int_\Omega \Big|F(\nabla u_*) +G(x,u_*)\Big| \,dx<+\infty. 
\end{equation}
We have the following existence result.
\begin{prop}
\label{prop:existence}
Let \(\Omega\subset\mathbb{R}^N\) be a bounded convex open set and \(\varphi:\mathbb{R}^N \to \mathbb{R}\) a Lipschitz continuous function. Let \(F:\mathbb{R}^{N}\to \mathbb{R}\) be a convex function, which is  $\Phi-$uniformly convex outside the ball \(B_R\) and $G:\Omega\to\mathbb{R}$ a measurable function satisfying \eqref{Gip} and \eqref{Gip_bis}. Then the following problem
\begin{equation}
\label{minexi}
\inf\left\{\mathcal{F}(u):=\int_{\Omega} \Big[F(\nabla u) +G(x,u)\Big] \,dx\, :\, u-\varphi\in W^{1,1}_0(\Omega)\right\},
\end{equation}
admits a solution. 
\end{prop}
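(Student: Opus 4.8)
The plan is to apply the direct method of the Calculus of Variations, with the main subtlety being that $F$ has no prescribed growth from above, so the natural space in which to look for competitors is only $W^{1,1}_0(\Omega)$ rather than some reflexive Sobolev space. First I would check that the infimum in \eqref{minexi} is finite: it is bounded below because, by Lemma \ref{lm:superlinear}, $F$ is superlinear (in particular bounded below by an affine function, say $F(\xi)\ge -c_0$), while assumption \eqref{Gip} together with the Lipschitz bound on $\varphi$, the Poincar\'e inequality on $u-\varphi\in W^{1,1}_0(\Omega)$, and H\"older's inequality with $g_1\in L^N(\Omega)$ (using the embedding $W^{1,1}_0(\Omega)\hookrightarrow L^{N/(N-1)}(\Omega)$) gives
\[
\int_\Omega G(x,u)\,dx\ge -\|g_1\|_{L^N(\Omega)}\,\|u\|_{L^{N/(N-1)}(\Omega)}-\|g_2\|_{L^1(\Omega)}\ge -C\big(1+\|\nabla u\|_{L^1(\Omega)}\big),
\]
which is more than compensated by the superlinear growth of $\int_\Omega F(\nabla u)\,dx$ once $\|\nabla u\|_{L^1(\Omega)}$ is large; and it is not $+\infty$ because the competitor $u_*$ from \eqref{Gip_bis} is admissible.

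Next I would take a minimizing sequence $\{u_n\}\subset\varphi+W^{1,1}_0(\Omega)$. The coercivity estimate just sketched shows that $\sup_n\int_\Omega F(\nabla u_n)\,dx<+\infty$, and then superlinearity of $F$ (via the de la Vall\'ee-Poussin criterion) yields that $\{\nabla u_n\}$ is equi-integrable, hence weakly precompact in $L^1(\Omega;\mathbb{R}^N)$ by the Dunford--Pettis theorem; combined with the Poincar\'e inequality applied to $u_n-\varphi$, a subsequence (not relabeled) satisfies $u_n\rightharpoonup u$ in $W^{1,1}(\Omega)$ for some $u$ with $u-\varphi\in W^{1,1}_0(\Omega)$ (the latter because $W^{1,1}_0(\Omega)$ is weakly closed, being a closed subspace of a Banach space), and $u_n\to u$ strongly in $L^1(\Omega)$, hence a.e. along a further subsequence.

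Finally I would pass to the limit in $\mathcal{F}$. The gradient term $u\mapsto\int_\Omega F(\nabla u)\,dx$ is convex in $\nabla u$ and $F$ is continuous (hence lower semicontinuous) and bounded below, so this functional is sequentially weakly lower semicontinuous on $W^{1,1}(\Omega)$ by the classical Tonelli--Serrin theorem (or by writing $F$ as a supremum of affine functions and using weak $L^1$ convergence of $\nabla u_n$). For the lower order term, from $u_n\to u$ a.e. and the continuity of $u\mapsto G(x,u)$ one gets $G(x,u_n)\to G(x,u)$ a.e.; the bound \eqref{Gip} gives $G(x,u_n)+|u_n|\,|g_1(x)|+|g_2(x)|\ge 0$, and since $\|u_n\|_{L^{N/(N-1)}(\Omega)}$ is bounded one can extract a further subsequence along which $|u_n|\to|u|$ in $L^1(\Omega)$ (indeed strongly, using the compact embedding $W^{1,1}_0\hookrightarrow L^1$), so Fatou's lemma applied to the nonnegative integrands yields $\liminf_n\int_\Omega G(x,u_n)\,dx\ge\int_\Omega G(x,u)\,dx$. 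Adding the two lower semicontinuity statements gives $\mathcal{F}(u)\le\liminf_n\mathcal{F}(u_n)=\inf\eqref{minexi}$, so $u$ is a minimizer (and $\mathcal{F}(u)<+\infty$). The main obstacle is the weak-$L^1$ compactness step: one must genuinely use the \emph{superlinearity} of $F$ from Lemma \ref{lm:superlinear} to get equi-integrability of $\{\nabla u_n\}$, since without a power growth bound the usual reflexive-space argument is unavailable; everything else is a routine assembly of Dunford--Pettis, Tonelli--Serrin lower semicontinuity, and Fatou.
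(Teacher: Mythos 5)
Your proposal follows essentially the same route as the paper: establish superlinearity of $F$ (Lemma~\ref{lm:superlinear}), use it together with the H\"older/Sobolev control of $\int|u|\,|g_1|$ to absorb the lower-order term and obtain a uniform bound on $\int_\Omega F(\nabla u_n)\,dx$, deduce equi-integrability of $\{\nabla u_n\}$ from superlinearity, invoke Dunford--Pettis and Rellich, and conclude by lower semicontinuity. You are in fact more explicit than the paper at the last step, which simply asserts ``by lower semicontinuity of the functional.''

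One small assembly error in your Fatou step deserves attention. After applying Fatou to $G(x,u_n)+|u_n|\,|g_1|+|g_2|\ge 0$, you must subtract $\int_\Omega |u_n|\,|g_1|\,dx$ and you need this quantity to converge to $\int_\Omega |u|\,|g_1|\,dx$. Strong $L^1$ convergence of $u_n$, which is what you invoke, is not enough since $g_1\in L^N(\Omega)$ and not $L^\infty(\Omega)$. However, you already have the right ingredients: boundedness of $\{u_n\}$ in $L^{N/(N-1)}(\Omega)$ together with a.e.\ convergence yields (e.g.\ via the Vitali convergence theorem, since $\int_E|u_n|\,|g_1|\le \|u_n\|_{L^{N/(N-1)}}\|g_1\,1_E\|_{L^N}$, or via weak $L^{N/(N-1)}$ convergence paired against $g_1\in L^N$) exactly the needed convergence $\int_\Omega|u_n|\,|g_1|\,dx\to\int_\Omega|u|\,|g_1|\,dx$. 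Finally, note that both your argument and the paper's tacitly use continuity of $v\mapsto G(x,v)$ (to get $G(x,u_n)\to G(x,u)$ a.e.), which is not explicitly among the stated hypotheses but does hold in every application made of the proposition.
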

\begin{proof}
By Lemma \ref{lm:superlinear} we know that $F$ is superlinear. In particular, we get that for every $M >0$, there exists $r=r(M)>0$ such that
for every $\xi\in \mathbb{R}^N\setminus B_r$ 
\begin{equation}
\label{M}
F(\xi)\ge M\,|\xi|.
\end{equation}
For every $u\in W^{1,1}(\Omega)$ we thus get 
\[
F(\nabla u)\ge \min\Big(M\,|\nabla u|,\min_{B_r} F\Big).
\]
From \eqref{Gip}, it also follows that 
\[
G(x,u)\ge -|u|\,|g_1(x)|-|g_2(x)|.
\]
Hence, we get that $F(\nabla u)+G(x,u)$ is greater than or equal to an $L^1(\Omega)$ function. This proves that the functional $\mathcal{F}$ is well-defined on the class $\varphi+W^{1,1}_0(\Omega)$.
\vskip.2cm\noindent
In order to prove that \eqref{minimisation} admits a solution, we first observe that the functional is not constantly $+\infty$, since by \eqref{Gip_bis}, $\mathcal{F}(u_*)<+\infty$. By using H\"older and Sobolev inequalities we get
\[
\begin{split}
\int_\Omega |u|\,|g_1|\,dx&\le \|g_1\|_{L^N(\Omega)}\,\left(S_N\, \|\nabla u\|_{L^{1}(\Omega)}+S_N\,\|\nabla \varphi\|_{L^{1}(\Omega)}+\|\varphi\|_{L^{N'}(\Omega)}\right)\\
&\le C'_1\, \|g_1\|_{L^N(\Omega)}\, \|\nabla u\|_{L^{1}(\Omega)}+C_2',
\end{split}
\]
for some $C_1'=C_1'(N)>0$ and $C_2'=C_2'(N,\|g_1\|_{L^N(\Omega)},\|\varphi\|_{W^{1,1}(\Omega)})>0$. In conclusion, we obtain
\begin{equation}
\label{frombelow}
\begin{split}
\int_\Omega F(\nabla u)\,dx+\int_\Omega G(x,u)\,dx&\ge \int_\Omega F(\nabla u)\,dx
-C_1'\,\|g_1\|_{L^N(\Omega)}\, \int_\Omega |\nabla u|\,dx-C_2''.
\end{split}
\end{equation}
with $C_2''=C_2''(N,\|g_1\|_{L^N(\Omega)},\|g_2\|_{L^1(\Omega)},\|\varphi\|_{W^{1,1}(\Omega)})>0$. 
\par
We now take a minimizing sequence $\{u_n\}_{n\in\mathbb{N}}\subset \varphi+W^{1,1}_0(\Omega)$, thanks to the previous discussion we can suppose that
\[
\int_\Omega \Big[F(\nabla u_n)+G(x,u_n)\Big]\,dx\le C,\qquad \mbox{ for every }n\in\mathbb{N}.
\]
In particular, by using \eqref{frombelow}, for every $n\in\mathbb{N}$ we get
\begin{equation}
\label{intermediaire}
\int_\Omega F(\nabla u_n)\,dx \le C+C'_1\, \|g_1\|_{L^N(\Omega)}\, \int_\Omega |\nabla u_n|\,dx+C''_2.
\end{equation}
We now claim that the previous estimate implies
\begin{equation}
\label{borne}
\int_\Omega F(\nabla u_n)\,dx\le C,\qquad \mbox{ for every } n\in\mathbb{N},
\end{equation}
for a different constant $C=C(N,|\Omega|,\|g_1\|_{L^N(\Omega)},\|g_2\|_{L^1(\Omega)},\|\varphi\|_{W^{1,1}(\Omega)},F)>0$. Indeed, if $g_1\equiv 0$, then there is nothing to prove. Let us suppose $\|g_1\|_{L^N(\Omega)}>0$.
We now take
\[
M=2\,C_1'\,\|g_1\|_{L^N(\Omega)},
\] 
we get
\[
C'_1\, \|g_1\|_{L^N(\Omega)}\, \int_\Omega |\nabla u_n|\,dx=\frac{M}{2}\, \int_\Omega |\nabla u_n|\,dx\le \frac{1}{2}\,\int_\Omega F(\nabla u_n)\,dx+\frac{M}{2}\,r\,|\Omega|.
\]
By using this information into \eqref{intermediaire}, we get the claimed estimate \eqref{borne}.  Since $F$ is superlinear, estimate \eqref{borne} implies that $\{\nabla u_n\}_{n\in\mathbb{N}}$ is equi-integrable (\cite[Lemma 1.9.1]{Mor}). Then Dunford-Pettis Theorem implies that a subsequence of $\{\nabla u_n\}_{n\in\mathbb{N}}$ (we do not relabel) weakly converges in $L^1$ to $\phi\in L^1(\Omega;\mathbb{R}^N)$. By Rellich Theorem, we may also assume that the sequence $\{u_n\}_{n\in\mathbb{N}}$ is  strongly converging in $L^1(\Omega)$ to a function $u$. It is easy to see that $u\in W^{1,1}(\Omega)$ and $\nabla u=\phi$, since for every $\psi\in C^\infty_0(\Omega;\mathbb{R}^N)$ we have
\[
\int_\Omega u\,\mathrm{div\,}\psi\,dx=\lim_{n\to\infty}\int_\Omega u_n\,\mathrm{div\,}\psi\,dx=-\lim_{n\to\infty}\int_\Omega \langle \nabla u_n,\psi\rangle\,dx=-\int_\Omega \langle \phi,\psi\rangle\,dx.
\]
Then $u$ is admissible for the variational problem. By lower semicontinuity of the functional we get the desired result.
\end{proof}

\section{A weaker result: the case of $\mu-$uniformly convex problems} 
\label{sec:4}

In this section, as an intermediate result we prove the following weaker version of the Main Theorem. This will be used in the next section. The form of the Lipschitz estimate \eqref{lipschitz} below will play an important role.
\begin{teo}
\label{teo:minimisationk}
Let \(\Omega\subset\mathbb{R}^N\) be a bounded convex open set, \(\varphi:\mathbb{R}^N \to \mathbb{R}\) a Lipschitz continuous function, \(F:\mathbb{R}^{N}\to \mathbb{R}\) a convex function and $f\in L^\infty(\Omega)$. We consider the following problem
\begin{displaymath}
\tag{$P_\mu$}
\label{minimisation_weak}
\min\left\{\mathcal{F}(u):=\int_{\Omega} \Big[F(\nabla u) +f\, u\Big] \,dx\, :\, u-\varphi\in W^{1,1}_0(\Omega)\right\}.
\end{displaymath}
Assume that \(\varphi|_{\partial \Omega}\) satisfies the bounded slope condition of rank $K>0$ and that \(F\) is $\mu-$uniformly convex outside some ball \(B_R\) for some $0<\mu\le 1$.
\par
Then every solution \(u\) of \eqref{minimisation_weak} is Lipschitz continuous. 
More precisely, there exists a constant $\mathcal{L}_0=\mathcal{L}_0(N,K,R,\|f\|_{L^\infty(\Omega)},\mathrm{diam}(\Omega))>0$ such that
\begin{equation}
\label{lipschitz}
\|\nabla u\|_{L^\infty(\Omega)}\le \frac{\mathcal{L}_0}{\mu},
\end{equation}
for every solution $u$.
\end{teo}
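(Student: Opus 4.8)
The strategy is the classical one for nonsmooth Lagrangians with no Euler--Lagrange equation: first reduce to a smooth, everywhere-uniformly-convex, smooth-domain approximate problem via Lemma~\ref{lemma_approximation_sets} and a mollification of $F$, then establish (i) a uniform $L^\infty$ bound, (ii) a uniform boundary gradient bound, and (iii) a uniform interior Lipschitz bound, all independent of the approximation parameter $k$; finally pass to the limit. Since in the statement of Theorem~\ref{teo:minimisationk} we already assume $F$ is $\mu$-uniformly convex outside $B_R$, the approximation is only needed to make $F$ smooth and uniformly convex \emph{on all of} $\mathbb{R}^N$ (so that Theorem~\ref{teo:stampacchia} applies and gives a smooth solution $u_k$ on $\overline{\Omega_k}$), while keeping the $\mu$-uniform convexity outside $B_R$ essentially untouched; one can for instance add $\varepsilon_k|z|^2$ after mollifying at scale $\varepsilon_k$, and the constant $\mu$ in \eqref{lipschitz} should survive because the barrier and translation arguments only ever use uniform convexity in the region $\{|\nabla u|>R\}$. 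Throughout, $f$ is approximated by smooth $f_k$ with $\|f_k\|_{L^\infty}\le \|f\|_{L^\infty}$.

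For the $L^\infty$ bound on $u_k$, I would compare $u_k$ with affine functions: using the bounded slope condition one builds upper/lower affine barriers on $\partial\Omega_k$, and superlinearity of $F$ (Lemma~\ref{lm:superlinear}) together with the term $\int f_k\,u_k$ gives $\|u_k\|_{L^\infty}\le C(K,\mathrm{diam}\,\Omega,\|f\|_\infty,\dots)$ via a truncation/comparison argument (or a De Giorgi iteration on level sets, using that $f_k\in L^\infty\subset L^q$). For the boundary gradient bound, fix $y\in\partial\Omega_k$ and use the affine function $\ell_y^\pm(x)=\varphi_k(y)+\langle\zeta_y^\pm,x-y\rangle$ coming from the bounded slope condition of rank $K+2$; the new difficulty compared with \cite{St} is that the strong degeneracy of $F$ inside $B_R$ prevents the naive use of the comparison principle, so one must \emph{add an explicit correction term} (a radial barrier of the form $\ell_y^\pm(x)\pm A\,d(x)$, with $d$ a smoothed distance to $\partial\Omega_k$ and $A$ chosen large using the degeneracy structure and $\|f\|_\infty$) to dominate/be dominated by $u_k$ near $y$; evaluating at $y$ gives $|\nabla u_k(y)|\le C(N,K,R,\|f\|_\infty,\mathrm{diam}\,\Omega)$, independent of $k$ and of $\mu$.

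The heart of the matter is the interior Lipschitz estimate \eqref{lipschitz}. Here I would compare $u_k$ with its translate $u_k(\cdot+\tau)$ on the overlap of $\Omega_k$ and $\Omega_k-\tau$, using the $\mu$-uniform convexity of $F$ to absorb the cross term. Writing $w=u_k(\cdot+\tau)$, convexity/minimality yields, on the set $E=\{x:[\nabla u_k(x),\nabla w(x)]\cap B_R=\emptyset\}$, an inequality of the shape
\[
\mu\int_{E}|\nabla u_k-\nabla w|^2\,dx\le C\int_{\Omega_k}|f_k(x)-f_k(x+\tau)|\,|u_k-w|\,dx+(\text{boundary terms}),
\]
and the point is that on the complementary set $\{|\nabla u_k|\le R\}\cup\{|\nabla w|\le R\}$ the gradients are automatically bounded by $\sim R$ a.e., so the only place a genuine estimate is needed is where $F$ is uniformly convex, and there $\mu$ appears as a denominator. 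Dividing by $|\tau|$, using the $L^\infty$ bounds to control $\int|f_k(\cdot)-f_k(\cdot+\tau)||u_k-w|\lesssim \|f\|_\infty\|u_k\|_\infty|\tau|\,|\Omega|$, and the boundary term by the already-established boundary-gradient bound ($|u_k(x)-w(x)|\lesssim \|\nabla u_k\|_{L^\infty(\partial)}\,|\tau|$ for $x$ near $\partial\Omega_k$, using the bounded slope condition of rank $K+2$ to control how $\partial\Omega_k$ moves), one gets a difference-quotient bound $\|D_\tau u_k\|_{L^2}\le (\mathcal{L}_0/\mu)$ uniformly; upgrading to $L^\infty$ (via the usual Moser/De Giorgi argument on the linearized equation for $\partial_{x_i}u_k$, or by iterating the translation estimate) gives \eqref{lipschitz}. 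The main obstacle, and the genuinely new point over \cite{St}, is precisely the bookkeeping on the set $\{|\nabla u_k|\le R\}$: one cannot ignore it (it need not be open, so "bounded gradient a.e." does not give Lipschitz there directly), and one must carefully arrange that the contribution of this set to the comparison inequality is either zero (because there the integrand $F(\tfrac{\nabla u_k+\nabla w}{2})-\tfrac12 F(\nabla u_k)-\tfrac12 F(\nabla w)$ drops out after using convexity only where it is strict) or controlled by the boundary/lower-order terms; handling this cleanly is what forces the specific form of the barriers and the splitting of $\Omega_k$. Finally, with the uniform bounds in hand one passes to the limit $k\to\infty$: $u_k\to u_\infty$ (strongly in $W^{1,p}$ for every $p<\infty$ on compact subsets, using the Lipschitz bound), $u_\infty$ is admissible and minimizing for \eqref{minimisation_weak} by $\Gamma$-convergence of the functionals, hence $u_\infty$ is a Lipschitz solution with $\|\nabla u_\infty\|_{L^\infty}\le\mathcal{L}_0/\mu$, and then Lemma~\ref{propagationofregularity} propagates this regularity to \emph{every} solution (absorbing the extra additive constant $2R$ into $\mathcal{L}_0$).
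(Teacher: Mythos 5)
Your overall roadmap is the same as the paper's (regularize $F$, $f$, $\Omega$, $\varphi$; build barriers; compare with translates; pass to the limit; propagate regularity with Lemma~\ref{propagationofregularity}), but two concrete points do not hold up as written.

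First, the claim that the boundary gradient bound is \emph{independent of $\mu$} is false, and your own sketch contradicts it: you say the correction coefficient $A$ is ``chosen large using the degeneracy structure,'' which is precisely where $\mu$ enters. In the paper the barrier is $\psi_y^\pm=\varphi_k(y)+\langle\zeta_y^\pm,x-y\rangle \pm T\,a_{k,y}(x)$ with $T\sim \|f\|_\infty/\mu$, because to make $\psi_y^-$ a subsolution one needs $\operatorname{div}(\nabla F_k(\nabla\psi_y^-))\ge f_k$, and since $D^2F_k\ge (\mu/36)\,\mathrm{Id}$ only away from $B_{R+1}$ the comparison gives $\mu\,T\gtrsim \|f\|_\infty$. (If $F$ were, say, $\mu\,(|z|-R)_+^2/2$ plus something affine, a barrier with $\mu$-independent gradient would have $\operatorname{div}(\nabla F(\nabla\psi))\lesssim\mu$, which cannot dominate a fixed $f$.) So the barriers are genuinely $(L_0/\mu)$-Lipschitz, and this is where the $1/\mu$ in \eqref{lipschitz} originates---already at the boundary, not only in the interior step.

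Second, and more substantively, the step from the $L^2$ difference-quotient estimate to the $L^\infty$ gradient bound is not a ``usual Moser/De Giorgi on the linearized equation for $\partial_{x_i}u_k$'': that linearized equation $-\operatorname{div}(D^2F_k(\nabla u_k)\nabla u_{x_i})=\partial_{x_i}f_k$ degenerates exactly on $\{|\nabla u_k|\lesssim R\}$, which is the whole source of difficulty. The paper's closing argument is a different mechanism: one restricts the translation comparison to the good set $A_\tau(\alpha,R)$ where both $|\nabla U|$ and $|\nabla U_{\tau}|$ exceed $2R$ (so Lemma~\ref{lemma_uc} applies), converts the $f_k$-difference term by a Fubini/primitive trick ($g_{k,h}=\int_{x_1}^{x_1+h}f_k$), passes to the limit $h\to 0$ using $1_{\{U_{x_1}>\alpha\}}\le\liminf_h 1_{A_{he_1}(\alpha,R)}$ for $\alpha>2R$, and ends with a Cavalieri principle plus a Gronwall-type ODE for $\chi(\alpha)=\int_\alpha^\infty\Theta(s)\,ds$ that forces the distribution function of $U_{x_1}$ to vanish above a finite threshold $\alpha_0\sim \mathcal{L}_0/\mu$. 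This level-set argument is the essential ingredient you leave out; ``iterating the translation estimate'' is not a substitute, because the set $\{|\nabla u_k|\le R\}$ contributes nothing to the uniform-convexity term and one must show its influence vanishes from the high level sets, not merely ``control'' it by boundary terms. Once you replace your concluding step by the Cavalieri--Gronwall argument and accept the $1/\mu$ dependence already at the boundary, the proof matches the paper's.
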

In order to neatly present the proof of this result, we divide this section into subsections, each one corresponding to a step of the proof.

\subsection{Step 0: a regularized problem}
We will proceed by approximation. We consider a nondecreasing sequence of smooth convex functions
\(\{F_k\}_{k\in \mathbb{N}}\)  on \(\mathbb{R}^N\) which converges uniformly on bounded sets to \(F\) and such that for every $k\in\mathbb{N}$
\begin{equation}
\label{eqFkuc1}
\langle D^2 F_k (x)\,\eta,\eta\rangle \geq \frac{\mu}{36}\,|\eta|^2,\quad\qquad \mbox {for every } x\in \mathbb{R}^N\setminus B_{R+1} \mbox{ and every } \eta \in\mathbb{R}^N.
\end{equation} 
See Lemma \ref{lemma_Fk} in Appendix \ref{section_approximation_convex_functions}.
\vskip.2cm
We also introduce  a sequence \(\{f_k\}_{k\in \mathbb{N}}\subset C^{\infty}_0(\mathbb{R}^N)\) which converges $\ast-$weakly to \(f\) in \(L^\infty(\Omega)\) and such that for every $k\in\mathbb{N}$
\begin{equation}
\label{norme}
\|f_k\|_{L^{\infty}(\mathbb{R}^N)}\le (\|f\|_{L^\infty(\Omega)}+1)=:\Lambda.
\end{equation}
Finally, let \(\{\Omega_k\}_{k\in\mathbb{N}}\) and \(\{\varphi_k\}_{k\in\mathbb{N}}\) be as in Lemma \ref{lemma_approximation_sets}.
We then define
\[
\mathcal{F}_k(v)=\int_{\Omega_k} F_k(\nabla v) \,dx+\frac{1}{k}\,\int_{\Omega_k} |\nabla v|^2\,dx + \int_{\Omega_k} f_k\,v\,dx,
\]
and consider the following problem 
\begin{displaymath}
\tag{$P_{\mu,k}$}
\label{minimisationk}
\min\left\{\mathcal{F}_k(u)\, :\, u-\varphi_k\in W^{1,1}_0(\Omega_k)\right\}.
\end{displaymath}
Existence of a solution to \eqref{minimisationk} follows from Proposition \ref{prop:existence}. Moreover, since the Lagrangian is strictly convex in the gradient variable, such a solution is unique. We will denote it by \(u_k\in\varphi_k +W^{1,1}_0(\Omega_k)\). Observe that by Theorem \ref{teo:stampacchia}, we know that \(u_k\) is smooth on \(\overline{\Omega}_k\). We aim at proving a global Lipschitz estimate independent of $k$.
\begin{nota}
In what follows, in order to simplify the notation, we denote the function \(u_k\) by \(U\) and the set \(\Omega_k\) by \(\mathcal{O}\).
\end{nota}

\subsection{Step 1: construction of uniform barriers}

Without loss of generality, we can assume that \(K+2\) (the rank given by the bounded slope condition, see Lemma \ref{lemma_approximation_sets}) is also the Lipschitz constant of \(\varphi_k\) on the whole \(\mathbb{R}^N\) (we only need to redefine \(\varphi_k\) outside \(\partial\mathcal{O}\) if necessary).
\vskip.2cm
The proof of Theorem \ref{teo:minimisationk} is based on the method of barriers that we now construct explicitly. In passing, we will also prove that minimizers of \eqref{minimisationk} are bounded, uniformly in $k$.
\begin{prop}
\label{lm:barrier}
With the previous notation, there exist a constant
\[
L_0=L_0\big(N,K, R,\|f\|_{L^\infty(\Omega)},\mathrm{diam}(\Omega)\big)>0,
\]
and two $(L_0/\mu)-$Lipschitz maps \(\ell^-, \ell^+ : \mathbb{R}^N \to \mathbb{R}\) with the following properties:
\begin{itemize}
\item[(i)] for the solution \(U\) of \eqref{minimisationk}, for every \(x\in \mathcal{O}\),
\[
\ell^-(x)\leq U(x) \leq \ell^+(x),
\]
so that in particular $U\in L^\infty(\mathcal{O})$, with an estimate independent of $k$;
\vskip.2cm
\item[(ii)] \(\ell^-=\ell^+=\varphi_k\) on \(\mathbb{R}^N\setminus\mathcal{O}\).
\end{itemize}
\end{prop}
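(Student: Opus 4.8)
The plan is to build the two barriers by taking, at each boundary point, the best affine minorant/majorant provided by the bounded slope condition and then correcting it with a superharmonic-type bump that absorbs the lower order term $f_k\,u$ and the degeneracy of $F_k$ on $B_{R+1}$. Fix $y\in\partial\mathcal{O}$ and let $\zeta_y^+\in\mathbb{R}^N$, $|\zeta_y^+|\le K+2$, be the slope from the bounded slope condition for $\varphi_k$ on $\partial\mathcal{O}$, so that $\varphi_k(x)\le \varphi_k(y)+\langle\zeta_y^+,x-y\rangle$ for all $x\in\partial\mathcal{O}$. Since $\mathcal{O}$ is convex and bounded, there is an outer normal $\nu_y\in\mathcal{S}_{\mathcal{O}}(y)$; set $d_y(x)=\langle \nu_y, y-x\rangle\ge 0$ on $\overline{\mathcal{O}}$, which vanishes at $y$ and is bounded by $\mathrm{diam}(\mathcal{O})\le \mathrm{diam}(\Omega)+1$ (for $k$ large). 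I would then look for an upper barrier of the form
\[
\ell^+_y(x)=\varphi_k(y)+\langle\zeta_y^+,x-y\rangle + A\,d_y(x) + \frac{B}{2}\,d_y(x)^2,
\]
with $A,B>0$ constants depending only on $N,K,R,\Lambda=\|f\|_{L^\infty(\Omega)}+1,\mathrm{diam}(\Omega)$ (and linearly on $1/\mu$), to be chosen so that $\ell^+_y$ is a supersolution of the Euler–Lagrange equation of $(P_{\mu,k})$ on $\mathcal{O}$. Then $\ell^+ := \inf_{y\in\partial\mathcal{O}} \ell^+_y$ is still a supersolution, equals $\varphi_k$ on $\partial\mathcal{O}$ (hence, after redefining, on $\mathbb{R}^N\setminus\mathcal{O}$), and is $(L_0/\mu)$-Lipschitz; the comparison principle for the strictly convex smooth functional $\mathcal{F}_k$ gives $U\le\ell^+$ on $\mathcal{O}$. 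The lower barrier $\ell^-$ is constructed symmetrically using $\zeta_y^-$ and the concavity side.

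The key computation is the supersolution inequality. Writing the Lagrangian as $G_k(z)=F_k(z)+\frac1k|z|^2$, one needs $-\mathrm{div}\big(\nabla G_k(\nabla \ell^+_y)\big)\ge -f_k$ on $\mathcal{O}$ in the viscosity/distributional sense. Because $\nabla\ell^+_y(x)=\zeta_y^+ - (A+B\,d_y(x))\,\nu_y$ is a one-parameter family along the ray direction $\nu_y$, a direct computation gives $-\mathrm{div}(\nabla G_k(\nabla\ell^+_y)) = B\,\langle D^2G_k(\nabla\ell^+_y)\,\nu_y,\nu_y\rangle$. On the region where $|\nabla\ell^+_y|>R+1$ — which is guaranteed by choosing $A$ large enough compared to $K+2+R+1$ so that $|{-}(A+B\,d_y)\nu_y|$ dominates — estimate \eqref{eqFkuc1} yields $\langle D^2G_k(\nabla\ell^+_y)\,\nu_y,\nu_y\rangle\ge \mu/36$, so the left-hand side is $\ge B\,\mu/36$, and choosing $B= 36\,\Lambda/\mu$ beats $-f_k\ge -\Lambda$. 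One must also handle the possibility that, near $y$, $|\nabla \ell^+_y|\le R+1$: here I would instead choose $A$ itself so large — $A\ge 2(K+2)+2(R+1)$, say — that $|\zeta_y^+-(A+B\,d_y)\nu_y|\ge A-(K+2)\ge R+1$ already at $d_y=0$, ruling out the degenerate regime on all of $\overline{\mathcal{O}}$; this is where the "new explicit barriers adapted to this setting" enter, since in \cite{St} a single affine-plus-linear barrier sufficed but here the linear term must be taken large in a quantitative way. Tracking all the constants, $A$ and $B$ are bounded by a constant times $1/\mu$ depending only on the stated quantities, giving the Lipschitz bound $L_0/\mu$ for $\ell^\pm_y$ and hence for their inf/sup.

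Finally, one checks the boundary matching and the uniform $L^\infty$ bound: $\ell^+_y(y)=\varphi_k(y)$ and $\ell^+_y\ge\varphi_k$ on $\partial\mathcal{O}$ by the bounded slope condition plus $A\,d_y+\frac B2 d_y^2\ge0$, so $\ell^+=\varphi_k$ on $\partial\mathcal{O}$; redefining $\ell^+$ to equal $\varphi_k$ outside $\mathcal{O}$ keeps it Lipschitz with the same rank (the matching is $C^0$ and both pieces are Lipschitz). Then $\|U\|_{L^\infty(\mathcal{O})}\le \max\{\|\ell^-\|_{L^\infty(\mathcal{O})},\|\ell^+\|_{L^\infty(\mathcal{O})}\}\le \|\varphi\|_{L^\infty}+(L_0/\mu)\,\mathrm{diam}(\Omega)$, which is the claimed $k$-independent bound. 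The main obstacle I anticipate is precisely the quantitative choice forcing $|\nabla\ell^\pm_y|>R+1$ throughout $\overline{\mathcal{O}}$ while keeping $A,B$ controlled only by $N,K,R,\Lambda,\mathrm{diam}(\Omega)$ (and $1/\mu$); once that is arranged, the supersolution inequality and the comparison argument are routine. One subtlety to be careful about: the distance-type function $d_y$ is only a supporting-hyperplane distance, not the Euclidean distance to $\partial\mathcal{O}$, but since $\mathcal{O}$ lies on one side of the hyperplane through $y$ with normal $\nu_y$, we have $d_y\ge 0$ on $\overline{\mathcal{O}}$, which is all that is used; its gradient $-\nu_y$ is constant, which is what makes the divergence computation clean.
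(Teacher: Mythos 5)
Your overall strategy (affine piece from the bounded slope condition plus a one–dimensional quadratic corrector in the direction $\nu_y$, with the linear part taken large enough to force $|\nabla\ell^\pm_y|>R+1$ everywhere, then passing to the inf/sup over $y\in\partial\mathcal{O}$) is the same as the paper's, but the quadratic corrector in $\ell^+_y$ has the \emph{wrong sign}, and the computation that is supposed to justify the choice of $B$ contains a sign error that hides this.

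Concretely, with $d_y(x)=\langle\nu_y,y-x\rangle$ one has $\nabla d_y=-\nu_y$ and $D^2\ell^+_y=B\,\nu_y\otimes\nu_y$, hence
\[
\mathrm{div}\big(\nabla G_k(\nabla\ell^+_y)\big)=\mathrm{tr}\big(D^2G_k(\nabla\ell^+_y)\,D^2\ell^+_y\big)=B\,\big\langle D^2G_k(\nabla\ell^+_y)\,\nu_y,\nu_y\big\rangle,
\]
\emph{without} the extra minus sign you wrote. With $B>0$ this is $\geq B\,\mu/36>0$, whereas a supersolution of the Euler--Lagrange equation must satisfy $\mathrm{div}(\nabla G_k(\nabla v))\le f_k$, which can be as small as $-\Lambda$. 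In other words, by adding a \emph{convex} bump $A\,d_y+\tfrac{B}{2}d_y^2$ with $A,B>0$ you have built a \emph{subsolution}, which is the right shape for the lower barrier $\ell^-$, not the upper one. The fix is to take $B<0$ (a concave corrector), so that $\mathrm{div}(\nabla G_k(\nabla\ell^+_y))\le B\,\mu/36\le-\Lambda$ once $|B|\ge 36\Lambda/\mu$; this is exactly what the paper's $\psi^+_y=\varphi_k(y)+\langle\zeta^+_y,x-y\rangle-T\,a_{k,y}$ does, since $a_{k,y}$ is convex so $-T\,a_{k,y}$ is concave (the sign $\pm T$ is flipped between $\psi^-_y$ and $\psi^+_y$). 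The analogous sign flip is then needed in $\ell^-_y$.

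Once you make $B<0$, a second (smaller) issue appears in your lower bound on $|\nabla\ell^+_y|$: with $B<0$ the quantity $A+B\,d_y$ is no longer minimized at $d_y=0$, so requiring $A\ge 2(K+2)+2(R+1)$ is not enough; you need $A\ge|B|\,\mathrm{diam}(\mathcal{O})+K+R+3$ (or similar) so that $A+B\,d_y\ge A-|B|\,\mathrm{diam}(\mathcal{O})$ stays above $K+2+R+1$ on all of $\overline{\mathcal{O}}$. This is still of order $1/\mu$, so the final Lipschitz bound $L_0/\mu$ survives. A last remark: the paper does not appeal to a comparison principle but instead tests the minimality of $U$ directly against $\max\{U,\psi^-_y\}$ (resp.\ $\min\{U,\psi^+_y\}$) and closes the argument with an integration by parts; your use of a viscosity/distributional comparison principle for the regularized strictly convex smooth functional is morally equivalent but requires a justification that $\ell^+=\inf_y\ell^+_y$ is indeed a viscosity supersolution, which the paper's more elementary variational argument sidesteps.
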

\begin{proof} 
We only give the construction for $\ell^-$, since the one for $\ell^+$ is completely analogous.
Let $y\in\partial\mathcal{O}$, we then take \(\nu_{\mathcal{O}}(y)\) the unit outer normal to \(\mathcal{O}\) at \(y\). Recall that by convexity  of $\mathcal{O}$ 
\begin{equation}
\label{diametro}
\langle \nu_\mathcal{O}(y), x-y \rangle \leq 0,\qquad \mbox{ for every }x\in\mathcal{O}.
\end{equation}
We introduce the function $a_{k,y}$ defined by
\[
a_{k,y}(x)=\Big[\left(2\,\mathrm{diam}(\mathcal{O})+\langle \nu_{\mathcal{O}}(y),x-y\rangle\right)^2-4\, \mathrm{diam}(\mathcal{O})^2\Big].
\]
Observe that this is a convex function such that
\begin{equation}
\label{maximum}
\Delta a_{k,y}=2,\ \mbox{ in }\mathcal{O},\qquad a_{k,y}\le 0 \mbox{ in }\overline{\mathcal{O}} \qquad \mbox{ and }\qquad a_{k,y}(y)=0.
\end{equation}
We also observe that by Lemma \ref{lemma_approximation_sets}, up to choosing $k$ sufficiently large, we can suppose that
\begin{equation}
\label{diametri}
\mathrm{diam}(\Omega)\le\mathrm{diam}(\mathcal{O})\le \mathrm{diam}(\Omega)+1.
\end{equation}
Finally, we define for \(x\in \mathbb{R}^N\),
\[
\psi_{y}^-(x)=\varphi_k(y) + \langle \zeta_{y}^-, x-y\rangle +T\,a_{k,y}(x),
\]
where \(\zeta_{y}^-\) is chosen as in \eqref{eqBSC} and
\[
T:=\frac{18}{\mu}\,\left(\frac{R+K+3}{\mathrm{diam}(\Omega)}+\Lambda+1\right).
\]
Recall that $\Lambda$ is the constant defined by \eqref{norme} and that $0<\mu\le 1$, by hypothesis.   
By construction we have \(\psi_{y}^-(y)=\varphi_k(y)\) and \(\psi_{y}^-\) is $(L_0/\mu)-$Lipschitz on \(\mathcal{O}\), with 
\begin{equation}
\label{L_0}
L_0=18\,\left[4\,\left(\frac{R+K+3}{\mathrm{diam}(\Omega)}+\Lambda+1\right)\,(\mathrm{diam}(\Omega)+1)+K+2\right].
\end{equation} 
In order to compute $L_0$, we also used that
\begin{equation}
\label{2fois}
\mathrm{diam}(\Omega)\le |\nabla a_{k,y}(x)|=2\,\Big|2\,\mathrm{diam}(\mathcal{O})+\langle \nu_{\mathcal{O}}(y),x-y\rangle\Big|\le 4\,\mathrm{diam}(\Omega)+4,
\end{equation}
which follows from the convexity of $\mathcal{O}$ and \eqref{diametri}.
Moreover, by \eqref{eqBSC} and \eqref{maximum}, we have
\[
\psi_y^-\le\varphi_k,\qquad \mbox{ on }\partial\mathcal{O}.
\]
Let \(U\) be the minimum of \eqref{minimisationk}. By testing the minimality of $U$ against the function $\max\{U, \psi_{y}^-\}$ we get
\begin{equation}
\label{eq180}
\begin{split}
\int_{\{U<\psi_{y}^-\}} \big[F_k(\nabla U) -F_k(\nabla \psi_{y}^-)\big ]\, dx&+\frac{1}{k}\, \int_{\{U<\psi_y^-\}} \Big[|\nabla U|^2-|\nabla \psi^-_y|^2\Big]\,dx\\
&\leq \int_{\{U<\psi_{y}^-\}} f_k\,(\psi_{y}^--U)\, dx\leq \int_{\{U<\psi_{y}^-\}} f^+_k\,(\psi_{y}^--U)\, dx.
\end{split}
\end{equation}
By using convexity in the left-hand side of \eqref{eq180}, we can estimate
\begin{equation}
\label{passagelimite}
\begin{split}
\int_{\{U<\psi_{y}^-\}} \big[F_k(\nabla U) -F_k(\nabla \psi_{y}^-)\big]\, dx &+\frac{1}{k}\, \int_{\{U<\psi_y^-\}} \Big[|\nabla U|^2-|\nabla \psi^-_y|^2\Big]\,dx\\
& \geq \int_{\{U<\psi_{y}^-\}} \langle \nabla F_k(\nabla \psi_{y}^-) , \nabla U-\nabla \psi_{y}^- \rangle\, dx\\
&+\frac{2}{k}\,\int_{\{U<\psi^-_y\}} \langle \nabla \psi_y^-,\nabla U-\nabla \psi^-_y\rangle\,dx.
\end{split}
\end{equation}
By integration by parts, 
\[ 
\begin{split}
\int_{\{U<\psi_{y}^-\}} \langle \nabla F_k(\nabla \psi_{y}^-) , \nabla U-\nabla \psi_{y}^- \rangle \, dx&+\frac{2}{k}\,\int_{\{U<\psi_{y}^-\}} \langle \nabla \psi_{y}^- , \nabla U- \nabla \psi^-_{y} \rangle\, dx\\
&=  \int_{\{U<\psi_{y}^-\}} \mathrm{div}( \nabla F_k(\nabla \psi_{y}^-) )\,(\psi_{y}^--U)\, dx\\
&+\frac{2}{k}\,\int_{\{U<\psi_{y}^-\}} \Delta \psi_{y}^- \, (\psi^-_{y}-U)\, dx.
\end{split}
\]
By noticing that \(\nabla\psi_{y}^- = \zeta_{y}^-+T\,\nabla a_{k,y}\) with \(|\zeta_{y}^-|\leq K+2\), the definition of \(T\) and the lower bound in \eqref{2fois} imply
\[
R+1< T\,|\nabla a_{k,y}|-(K+2)\leq \left|\nabla\psi_{y}^-\right|.
\] 
As a consequence of\footnote{If \(S_1\) and \(S_2\) are nonnegative symmetric matrices and \(\lambda\) is the lowest eigenvalue of \(S_1\), then \(\mathrm{tr\,}(S_1\,S_2) \geq \lambda\, \mathrm{tr\,}(S_2)\).} \eqref{eqFkuc1} and by construction of $\psi_y^-$, we have
\[
\begin{split}
\mathrm{div}(\nabla F_k(\nabla \psi_{y}^-))=\sum_{i,j=1}^N \partial_{x_i x_j}F_k(\nabla \psi_{y}^-(x))\, \partial_{x_i x_j} \psi_{y}^-&\geq \frac{\mu}{36}\, \Delta \psi_{y}^-\\
&=\frac{\mu}{36}\, T\,\Delta a_{k,y}=\frac{\mu}{18}\, T\ge \Lambda+1\ge f^+_k+1.
\end{split}
\]
Hence,
\[
\begin{split}
\int_{\{U<\psi_{y}^-\}} \mathrm{div}( \nabla F_k(\nabla \psi_{y}^-) )\,(\psi_{y}^--U )\,dx&+\frac{2}{k}\,\int_{\{U<\psi_{y}^-\}} \Delta \psi_{y}^- \, (\psi^-_{y}-U)\, dx\\
&\geq   \int_{\{U<\psi_{y}^-\}}(f^{+}_k+1)\,(\psi_{y}^{-}-U)\,dx.
\end{split}
\]
In view of \eqref{passagelimite}, we thus obtain
\[
\begin{split}
\int_{\{U<\psi_{y}^-\}} \big[F_k(\nabla U) -F_k(\nabla \psi_{y}^-)\big]\, dx &+\frac{1}{k}\, \int_{\{U<\psi_y^-\}} \Big[|\nabla U|^2-|\nabla \psi^-_y|^2\Big]\,dx \\
&\geq  \int_{\{U<\psi_{y}^-\}}(f^{+}_k+1) \,(\psi_{y}^--U )\,dx. 
\end{split}
\] 
But \eqref{eq180} then implies
\[
\left|\{U<\psi_{y}^-\}\right|=0,\qquad \mbox{ namely, for a.e. } x\in \mathcal{O},\quad U(x)\geq \psi_{y}^-(x).
\]
We now define the map $\ell^-$ as follows
\[
\ell^-(x)=\left\{\begin{array}{rl}
\sup\limits_{y\in \partial \mathcal{O}}\psi_{y}^-(x), & \mbox{ if } x\in\overline{\mathcal{O}},\\
&\\
\varphi_k(x),& \mbox{ otherwise}.
\end{array}
\right.
\]
Since \(\ell^-\) coincides with \(\varphi_k\) on \(\mathbb{R}^N \setminus \mathcal{O}\), is $(L_0/\mu)-$Lipschitz 
and satisfies \(\ell^-(x)\leq U(x)\) for every \(x\in \mathcal{O}\), this map has the desired properties.
\end{proof}

\subsection{Step 2: construction of competitors}
We still denote by \(U\)  a solution of \eqref{minimisationk} and we extend it by \(\varphi_k\) outside \(\mathcal{O}\). Let $L_0$ be the constant appearing in Proposition \ref{lm:barrier}. We pick \(\alpha\geq (L_0/\mu)\) and \(\tau\in \mathbb{R}^N\setminus\{0\}\) such that \(\mathcal{O} \cap (\mathcal{O}-\tau)\not=\emptyset\). For every function $\psi$, we denote 
\[
\psi_\tau(x)=\psi(x+\tau),\qquad x\in\mathbb{R}^N,
\]
and we set
\[
\psi_{\tau,\alpha}(x)=\psi_\tau(x) -\alpha\, |\tau|,\qquad x\in\mathbb{R}^N.
\]
Finally, we  introduce the notation \(\mathcal{O}_\tau:=\mathcal{O}-\tau\) and consider the functional
\[
\mathcal{F}_{k,\tau}(w)=\int_{\mathcal{O}_\tau} \Big[F_k(\nabla w(x)) + f_{k,\tau}(x)\,w(x)\Big] \,dx,\qquad w\in W^{1,1}(\mathcal{O}_\tau).
\]
By a change of variables, we have the following.
\begin{lm}\label{lemutsol}
The map \(U_{\tau,\alpha}\) minimizes \(\mathcal{F}_{k,\tau}\) on \(U_{\tau,\alpha}+W^{1,1}_0(\mathcal{O}_{\tau})\).
\end{lm}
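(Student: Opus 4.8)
The plan is to transport the minimality of $U$ for $\mathcal{F}_k$ on $\mathcal{O}$ to that of $U_{\tau,\alpha}$ for $\mathcal{F}_{k,\tau}$ on $\mathcal{O}_\tau$, by means of the affine substitution $x\mapsto x-\tau$ together with the vertical shift by $\alpha|\tau|$. The gradient-dependent terms of $\mathcal{F}_k$ (namely $\int F_k(\nabla\,\cdot)$ and the regularizing term $\tfrac1k\int|\nabla\,\cdot|^2$) are translation invariant and do not see the additive constant $\alpha|\tau|$, while the lower order term yields, after the substitution, the integral $\int_{\mathcal{O}_\tau} f_{k,\tau}\,w$ plus a constant independent of the competitor, which therefore cancels out in the end.

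Concretely, I would fix an arbitrary competitor $w\in U_{\tau,\alpha}+W^{1,1}_0(\mathcal{O}_\tau)$ and set $v(x):=w(x-\tau)+\alpha|\tau|$ for $x\in\mathcal{O}$. Since $w-U_{\tau,\alpha}$ has zero trace on $\partial\mathcal{O}_\tau$ and $U_{\tau,\alpha}(x-\tau)+\alpha|\tau|=U(x)$, one has $v-U=(w-U_{\tau,\alpha})(\cdot-\tau)\in W^{1,1}_0(\mathcal{O})$; as $U\in\varphi_k+W^{1,1}_0(\mathcal{O})$, the function $v$ is admissible for \eqref{minimisationk}, whence $\mathcal{F}_k(U)\le\mathcal{F}_k(v)$ by minimality of $U$. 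Next I would apply the change of variables $y=x-\tau$ in $\mathcal{F}_k(v)$: since $\nabla v(x)=\nabla w(x-\tau)$, the two gradient terms of $\mathcal{F}_k$ transform into the corresponding integrals over $\mathcal{O}_\tau$ with no extra contribution, while $\int_{\mathcal{O}} f_k\,v\,dx=\int_{\mathcal{O}_\tau} f_k(y+\tau)\big(w(y)+\alpha|\tau|\big)\,dy=\int_{\mathcal{O}_\tau} f_{k,\tau}\,w\,dy+\alpha|\tau|\int_{\mathcal{O}_\tau} f_{k,\tau}\,dy$. Hence $\mathcal{F}_k(v)=\mathcal{F}_{k,\tau}(w)+c$, with $c:=\alpha|\tau|\int_{\mathcal{O}_\tau} f_{k,\tau}\,dy$ independent of $w$.

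Finally, I would specialize this identity to $w=U_{\tau,\alpha}$, for which the associated $v$ is exactly $U$, obtaining $\mathcal{F}_k(U)=\mathcal{F}_{k,\tau}(U_{\tau,\alpha})+c$. Combining with $\mathcal{F}_k(U)\le\mathcal{F}_k(v)$ and cancelling the common constant $c$ yields $\mathcal{F}_{k,\tau}(U_{\tau,\alpha})\le\mathcal{F}_{k,\tau}(w)$ for every admissible $w$ (the case $\mathcal{F}_{k,\tau}(w)=+\infty$ being trivial), which is the claim. The argument is essentially bookkeeping; the only two points deserving care are the verification that $v$ lands in the correct affine Sobolev class $\varphi_k+W^{1,1}_0(\mathcal{O})$ after translating the boundary datum, and the check that the constant produced by the vertical shift in the lowest order term is literally the same on both sides so that it drops out. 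I do not expect any serious obstacle.
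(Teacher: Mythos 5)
Your argument is exactly the paper's (commented-out) proof: you transport a competitor $w\in U_{\tau,\alpha}+W^{1,1}_0(\mathcal{O}_\tau)$ to $v=w(\cdot-\tau)+\alpha|\tau|$ (which is the paper's $w_{-\tau,-\alpha}$), invoke minimality of $U$, change variables $y=x-\tau$, and cancel the competitor-independent constant $\alpha|\tau|\int_{\mathcal{O}_\tau}f_{k,\tau}$. The only caveat is notational and on the paper's side: as literally written, $\mathcal{F}_{k,\tau}$ omits the $\tfrac1k\int|\nabla\cdot|^2$ regularization, so the change of variables really gives $\mathcal{F}_k(v)=\mathcal{F}_{k,\tau}(w)+\tfrac1k\int_{\mathcal{O}_\tau}|\nabla w|^2\,dy+c$; this is harmless (one should read the regularizer as absorbed into $F_k$, as the paper clearly intends in Step 2), but it is worth flagging that your claimed identity $\mathcal{F}_k(v)=\mathcal{F}_{k,\tau}(w)+c$ relies on that reading.
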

In order to compare \(U\) and \(U_{\tau,\alpha}\) on \(\mathcal{O} \cap \mathcal{O}_\tau\), we will use the following result.
\begin{lm}\label{lemcompuut}
With the previous notation, we have
\[
U_{\tau,\alpha}(x) \leq U(x),\qquad \mbox{ for a.e. } x\in \mathbb{R}^N \setminus (\mathcal{O} \cap \mathcal{O}_\tau).
\]
\end{lm}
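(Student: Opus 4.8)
The plan is to decompose $\mathbb{R}^N \setminus (\mathcal{O}\cap\mathcal{O}_\tau)$ according to which of the two sets the point fails to belong to, and verify the desired inequality on each piece using the barrier estimate of Proposition \ref{lm:barrier} together with the Lipschitz bounds on $\varphi_k$. Write $\mathbb{R}^N\setminus(\mathcal{O}\cap\mathcal{O}_\tau) = (\mathbb{R}^N\setminus\mathcal{O}) \cup (\mathbb{R}^N\setminus\mathcal{O}_\tau)$ and treat the two cases.

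First, suppose $x\notin\mathcal{O}_\tau$, i.e. $x+\tau\notin\mathcal{O}$. Then $U_\tau(x)=U(x+\tau)=\varphi_k(x+\tau)$, because $U$ was extended by $\varphi_k$ outside $\mathcal{O}$. Since $\varphi_k$ is $(K+2)$-Lipschitz on all of $\mathbb{R}^N$ and $\alpha\ge L_0/\mu \ge K+2$ (this is the point where one checks the constants line up: from \eqref{L_0} one has $L_0/\mu \ge K+2$ since $\mu\le 1$ and the bracket in \eqref{L_0} already contains the summand $K+2$ multiplied by $18$), we get
\[
U_{\tau,\alpha}(x) = \varphi_k(x+\tau) - \alpha\,|\tau| \le \varphi_k(x) + (K+2)\,|\tau| - \alpha\,|\tau| \le \varphi_k(x).
\]
If in addition $x\notin\mathcal{O}$, then $U(x)=\varphi_k(x)$ and we are done; if $x\in\mathcal{O}$, then by the lower barrier $U(x)\ge \ell^-(x)$, and since $\ell^-$ agrees with $\varphi_k$ outside $\mathcal{O}$ and is $(L_0/\mu)$-Lipschitz, one still has $\varphi_k(x)\le U(x) + (\text{something})$; more directly, one should instead note $\ell^-(x)=\sup_{y\in\partial\mathcal{O}}\psi_y^-(x)$ and use that for any $y\in\partial\mathcal{O}$, $\psi_y^-(x)\ge \varphi_k(y)+\langle\zeta_y^-,x-y\rangle + T\,a_{k,y}(x)$; choosing $y$ to be a nearest boundary point one recovers $\ell^-(x)\ge \varphi_k(x) - C\,\mathrm{dist}(x,\partial\mathcal{O})$, which is not quite what is wanted. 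The cleaner route: when $x\in\mathcal{O}$ but $x\notin\mathcal{O}_\tau$, observe $U(x)\ge \varphi_k(x)$ fails in general, so one argues instead that $U(x)\ge \ell^-(x)\ge \psi_{y}^-(x)$ for the specific $y\in\partial\mathcal{O}$ realizing $a_{k,y}(x)=0$ is impossible when $x\in\mathcal{O}$; hence one should use the \emph{upper} barrier is irrelevant here and rather invoke that $U\ge\ell^-$ and $\ell^-\ge\varphi_k - (L_0/\mu)\,\mathrm{dist}(x,\Omega)$...

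The correct and simplest argument, which I would actually carry out, is: the case $x\notin\mathcal{O}_\tau$ reduces to showing $U_{\tau,\alpha}(x)\le U(x)$ with $U_{\tau,\alpha}(x)=\varphi_k(x+\tau)-\alpha|\tau|$; we bound this above by $\ell^+(x)$ using that $\ell^+$ is $(L_0/\mu)$-Lipschitz, equals $\varphi_k$ outside $\mathcal{O}$, and $\alpha\ge L_0/\mu$, giving $\varphi_k(x+\tau)-\alpha|\tau| \le \ell^+(x+\tau) - \alpha|\tau| \le \ell^+(x) + (L_0/\mu)|\tau| - \alpha|\tau| \le \ell^+(x)$; but $\ell^+(x)\le U(x)$ is false — it is $U(x)\le\ell^+(x)$. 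So one genuinely needs the lower barrier on the \emph{right}: we want $U_{\tau,\alpha}(x)\le U(x)$, and $U(x)\ge\ell^-(x)$, so it suffices to show $U_{\tau,\alpha}(x)\le\ell^-(x)$, i.e. $\varphi_k(x+\tau)-\alpha|\tau|\le\ell^-(x)$. Since $\ell^-=\varphi_k$ outside $\mathcal{O}$ and $\ell^-$ is $(L_0/\mu)$-Lipschitz, $\ell^-(x)\ge\ell^-(x+\tau)-(L_0/\mu)|\tau| = \varphi_k(x+\tau)-(L_0/\mu)|\tau|$ when $x+\tau\notin\mathcal{O}$, hence $\varphi_k(x+\tau)-\alpha|\tau|\le\varphi_k(x+\tau)-(L_0/\mu)|\tau|\le\ell^-(x)\le U(x)$, using $\alpha\ge L_0/\mu$. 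Symmetrically, when $x\notin\mathcal{O}$ we have $U(x)=\varphi_k(x)=\ell^-(x)$ and need $U_{\tau,\alpha}(x)\le U(x)=\varphi_k(x)$; if also $x+\tau\notin\mathcal{O}$ this is the Lipschitz estimate above; if $x+\tau\in\mathcal{O}$ then $U(x+\tau)\le\ell^+(x+\tau)$, and $\ell^+(x)=\varphi_k(x)$ with $\ell^+$ being $(L_0/\mu)$-Lipschitz gives $U_{\tau,\alpha}(x)=U(x+\tau)-\alpha|\tau|\le\ell^+(x+\tau)-\alpha|\tau|\le\ell^+(x)+(L_0/\mu)|\tau|-\alpha|\tau|\le\ell^+(x)=\varphi_k(x)=U(x)$. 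I expect the main obstacle to be purely bookkeeping: carefully splitting into the four sub-cases according to membership of $x$ and $x+\tau$ in $\mathcal{O}$, and in each invoking the appropriate ($\ell^-$ or $\ell^+$) barrier together with the inequality $\alpha\ge L_0/\mu \ge$ (Lipschitz constant of the barriers and of $\varphi_k$). No hard analysis is involved — the content is entirely in Proposition \ref{lm:barrier}.
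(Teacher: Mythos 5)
Your final argument (from ``The correct and simplest argument\dots'' onward) is correct and is essentially the paper's proof: you split into the cases $x\notin\mathcal{O}$ and $x\notin\mathcal{O}_\tau$, and in each one use the appropriate barrier from Proposition \ref{lm:barrier} together with its $(L_0/\mu)$-Lipschitz bound and $\alpha\ge L_0/\mu$. Two remarks on the presentation: the first two-thirds of the write-up are abandoned dead ends that you yourself discard, and in the $x\notin\mathcal{O}$ case the sub-split on whether $x+\tau\in\mathcal{O}$ is unnecessary, since $U\le\ell^+$ holds on all of $\mathbb{R}^N$, so $U(x)=\ell^+(x)\ge\ell^+(x+\tau)-(L_0/\mu)|\tau|\ge U_\tau(x)-\alpha|\tau|$ directly without distinguishing where $x+\tau$ lies.
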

\begin{proof} Let \(x\in \mathbb{R}^N \setminus (\mathcal{O} \cap \mathcal{O}_\tau)\) be a Lebesgue point of \(U\) and \(U_\tau\).
\par
We first consider the case \(x\not\in \mathcal{O}\). By using \(U\leq \ell^+\) on \(\mathbb{R}^N\), \(\alpha\geq (L_0/\mu)\) and the $(L_0/\mu)-$Lipschitz continuity of $\ell^+$, we get
\[
U(x)=\varphi_k(x)=\ell^+(x)\geq \ell^+_\tau(x) -\frac{L_0}{\mu}\,|\tau|\geq U_\tau(x)-\frac{L_0}{\mu}\,|\tau|\geq U_{\tau,\alpha}(x). 
\]
If \(x\not\in \mathcal{O}_{\tau}\), we use \(U\geq \ell^-\) on $\mathbb{R}^N$ to get
\[
U(x)\geq \ell^-(x)\geq \ell^-_\tau(x)-\frac{L_0}{\mu}\,|\tau|=(\varphi_k)_\tau(x)-\frac{L_0}{\mu}\,|\tau|=U_\tau(x)-\frac{L_0}{\mu}\,|\tau|\geq U_{\tau,\alpha}(x).
\]
This completes the proof.
\end{proof}
We now introduce the two functions
\begin{equation}
\label{eqdefvw}
V=\left\{\begin{array}{rl} \max\{U,\,U_{\tau,\alpha}\}, &\mbox{ on } \mathcal{O}\cap \mathcal{O}_\tau, \\ U,&\mbox{ on } \mathcal{O}\setminus \mathcal{O}_\tau,\end{array}\right.\qquad 
W=\left\{\begin{array}{rl} \min\{U,\,U_{\tau,\alpha}\}& \mbox{ on } \mathcal{O}\cap \mathcal{O}_\tau, \\ U_{\tau,\alpha},&\mbox{ on } \mathcal{O}_\tau\setminus \mathcal{O}.\end{array}\right.
\end{equation}
Then by Lemma \ref{lemcompuut}, we have \(V\in U+W^{1,1}_0(\mathcal{O})\) and \(W\in U_{\tau,\alpha}+W^{1,1}_0(\mathcal{O}_\tau)\). By using the minimality of $U$, we have
\[
\mathcal{F}_k(U)\leq \mathcal{F}_k(\theta\, V+(1-\theta)\,U),\qquad \mbox{ for every }\theta\in[0,1].
\]
Analogously, by Lemma \ref{lemutsol} we get 
\[
\mathcal{F}_{k,\tau}(U_{\tau,\alpha})\leq \mathcal{F}_{k,\tau}(\theta\, W + (1-\theta)\,U_{\tau,\alpha}),\qquad \mbox{ for every }\theta\in[0,1].
\] 
By summing these two inequalities, and taking into account the definitions of \(V\) and \(W\), with elementary manipulations we finally obtain
\begin{equation}
\label{towards}
\begin{split}
\int_{A_{\tau}(\alpha)} \Big[F_k(\nabla U_{\tau,\alpha}) + F_k(\nabla U) &-F_k(\theta\, \nabla U_{\tau,\alpha} + (1-\theta)\, \nabla U)-F_k(\theta\, \nabla U + (1-\theta)\, \nabla U_{\tau,\alpha})\Big]\, dx \\ 
&\leq \theta\, \int_{A_{\tau}(\alpha)}(f_k-(f_k)_\tau)\,(U_{\tau,\alpha}-U)\,dx,
\end{split}
\end{equation}
where
\[
A_\tau(\alpha)=\left\{x\in \mathcal{O}\cap \mathcal{O}_{\tau}\, :\, \frac{U_{\tau,\alpha}(x)-U(x)}{|\tau|}\ge 0\right\}.
\]
\subsection{Step 3: a uniform Lipschitz estimate}

In this part we prove the following.
\begin{prop}\label{lemuniflipbd}
With the previous notation, we have
\[
\|\nabla U\|_{L^{\infty}(\mathcal{O})}\leq \frac{\mathcal{L}}{\mu},
\]
for some constant $\mathcal{L}=\mathcal{L}(N,R,K,\|f\|_{L^\infty(\Omega)},\mathrm{diam}(\Omega))>0$. In particular, the estimate is independent of $k\in\mathbb{N}$.
\end{prop}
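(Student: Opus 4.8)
The plan is to exploit the key inequality \eqref{towards} to control the oscillation of $U$ under translations, separating the region where $|\nabla U|$ is large (hence $\mu$-uniform convexity of $F_k$ is available) from the region where $|\nabla U|$ is small. First I would specialize \eqref{towards} by choosing $\theta=1/2$, so that the bracket becomes $F_k(\nabla U_{\tau,\alpha})+F_k(\nabla U)-2\,F_k\!\big(\tfrac12\nabla U_{\tau,\alpha}+\tfrac12\nabla U\big)$, which by $\mu$-uniform convexity of $F_k$ outside $B_{R+1}$ (see \eqref{eqFkuc1}) is bounded below by $\tfrac{\mu}{72}\,|\nabla U_{\tau,\alpha}-\nabla U|^2$ at every point where the segment $[\nabla U(x),\nabla U_{\tau,\alpha}(x)]$ avoids $B_{R+1}$; on the remaining points of $A_\tau(\alpha)$ it is still $\ge 0$ by convexity. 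The right-hand side of \eqref{towards} is estimated crudely by $\tfrac12\,\|f_k-(f_k)_\tau\|_{L^\infty}\int_{A_\tau(\alpha)}(U_{\tau,\alpha}-U)\,dx$; since $U_{\tau,\alpha}-U\le U_\tau-U$ and, by the uniform barriers of Proposition \ref{lm:barrier}, $\|U\|_{L^\infty(\mathcal O)}$ is bounded independently of $k$, this is $O(|\tau|)$ with a constant depending only on the data — here one uses $\|f_k\|_{L^\infty}\le\Lambda$ from \eqref{norme}.

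The heart of the argument is then a Caccioppoli-type / hole-filling bound: from the two displays above one gets
\[
\frac{\mu}{72}\int_{A_\tau(\alpha)\cap\{[\nabla U,\nabla U_{\tau,\alpha}]\cap B_{R+1}=\emptyset\}} |\nabla U_{\tau,\alpha}-\nabla U|^2\,dx\le C\,|\tau|,
\]
with $C=C(N,R,K,\|f\|_{L^\infty(\Omega)},\mathrm{diam}(\Omega))$. The delicate point, as the authors flag in the introduction, is that on the complementary set $\{|\nabla U|\le R+1\}$ the gradient is pointwise bounded but this gives no Lipschitz control of $U$ itself, because that set may be badly behaved. I would handle this the way it is done for nonsmooth Lagrangians: let $x_0\in\mathcal O$ and consider the difference quotient $\delta_\tau U(x_0)=(U(x_0+\tau)-U(x_0))/|\tau|$. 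Writing $U(x_0+\tau)-U(x_0)=\int_0^1\langle\nabla U(x_0+t\tau),\tau\rangle\,dt$ and splitting the interval $[0,1]$ according to whether $|\nabla U(x_0+t\tau)|$ is above or below the threshold $R+2$: on the "small gradient" portion the integrand is $\le (R+2)|\tau|$ directly, while on the "large gradient" portion one compares with the translated minimizer and uses the $L^2$ bound just obtained (after integrating $x_0$ over a suitable subdomain and invoking Fubini, plus the elementary fact that $A_\tau(\alpha)$ together with its "reflected" analogue covers $\mathcal O\cap\mathcal O_\tau$ up to the set where $U_{\tau,\alpha}=U$), to conclude $\|U(\cdot+\tau)-U\|_{L^\infty}\le C'\,|\tau|/\mu$ uniformly in $k$ and in admissible $\tau$. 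Letting $|\tau|\to 0$ and using that $U$ is smooth on $\overline{\mathcal O}$ yields $\|\nabla U\|_{L^\infty(\mathcal O)}\le \mathcal L/\mu$.

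The main obstacle I expect is precisely the passage from the $L^2$-type gradient estimate to a genuine pointwise Lipschitz bound across the degeneracy region $\{|\nabla U|\le R+1\}$: one cannot simply integrate, and the standard trick of testing the Euler–Lagrange equation is unavailable because $F$ need not be $C^1$ and, more to the point, the estimate must not depend on the (possibly wild) regularity of $F_k$ near the origin nor on the measure-theoretic structure of the degeneracy set. The resolution is to keep everything at the level of the variational inequality \eqref{towards} with the translation parameter $\alpha\ge L_0/\mu$ chosen large enough that the barrier bound forces $U_{\tau,\alpha}\le U$ outside $\mathcal O\cap\mathcal O_\tau$ (Lemma \ref{lemcompuut}), so that the comparison is global; then the only place $\mu$-uniform convexity is genuinely needed is where $|\nabla U|>R+1$, and on the complement the trivial bound $|\nabla U|\le R+2$ suffices — the two bounds combine additively, which is exactly why the final constant has the form $\mathcal L/\mu$ with $\mathcal L$ depending only on $N,R,K,\|f\|_{L^\infty(\Omega)}$ and $\mathrm{diam}(\Omega)$.
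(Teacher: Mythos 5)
There is a genuine gap in the proposal, and it occurs precisely at the two places you yourself flag as "delicate."

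First, the claim that the right-hand side of \eqref{towards} is $O(|\tau|)$ is unjustified. From $\|f_k-(f_k)_\tau\|_{L^\infty}\le 2\Lambda$ and the uniform barrier bound $\|U\|_{L^\infty}\le C$ you only get
\[
\theta\int_{A_\tau(\alpha)}(f_k-(f_k)_\tau)(U_{\tau,\alpha}-U)\,dx \;=\; O(1),
\]
not $O(|\tau|)$. The inequality $U_{\tau,\alpha}-U\le U_\tau-U$ does not help: $U_\tau-U=O(|\tau|)$ with a $k$-\emph{independent} constant is precisely the Lipschitz estimate you are trying to prove, so invoking it is circular (the $k$-dependent smoothness of $U$ gives $O(|\tau|)$ only with a constant that blows up). Consequently the Caccioppoli-type bound
\[
\frac{\mu}{72}\int_{A_\tau(\alpha)\cap\{[\nabla U,\nabla U_{\tau,\alpha}]\cap B_{R+1}=\emptyset\}}|\nabla U_{\tau,\alpha}-\nabla U|^2\,dx\le C\,|\tau|
\]
is not established by the argument given. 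The paper handles the right-hand side by a completely different device: with $\tau=h\,e_1$, one sets $g_{k,h}(x)=\int_{x_1}^{x_1+h}f_k(\cdot)\,dt$ and uses Fubini to rewrite $\int(f_k-(f_k)_{he_1})\,\eta\,dx=\int g_{k,h}\,\eta_{x_1}\,dx$ with $\eta=(U_{he_1,\alpha}-U)_+$. This moves a derivative onto the difference $U_{he_1,\alpha}-U$; after dividing by $h^2$ and letting $h\to 0$, the right-hand side becomes $\frac{C}{\mu}\int f_k\,U_{x_1x_1}$, which is the natural object to pair against the left-hand side by H\"older.

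Second, even granting a bound of the form you write, passing from an $L^2$ bound on $\nabla U_{\tau,\alpha}-\nabla U$ to a pointwise Lipschitz estimate on $U$ by "integrating along $[x_0,x_0+\tau]$ and splitting according to $|\nabla U|\lessgtr R+2$" does not work, and the description given (Fubini over $x_0$, "reflected analogue," etc.) is not an argument. An $L^2$ bound controls an average, not a pointwise value; and on the "large gradient" portion of the segment you have no upper bound on $|\nabla U|$, so the line integral is not controlled by the $L^2$ information. The missing idea is that one must not try to control $U$ directly: the paper works with level sets of the partial derivative. Because $\alpha\ge L_0/\mu>2R$, the set $\{U_{x_1}>\alpha\}$ is automatically contained in the set where $|\nabla U|>2R$, so uniform convexity is available on the \emph{entire} level set and the degeneracy region never enters. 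One then obtains the Caccioppoli-type inequality
\[
\|\nabla U_{x_1}\|_{L^2(\{U_{x_1}>\alpha\})}\le \frac{C\Lambda}{\mu}\,|\{U_{x_1}>\alpha\}|^{1/2},
\]
and, via Sobolev and the Cavalieri formula, derives the differential inequality $\chi(\alpha)\le (C/\mu)(-\chi'(\alpha))^\gamma$ with $\gamma=(N+1)/N>1$ for $\chi(\alpha)=\int_\alpha^{\infty}|\{U_{x_1}>s\}|\,ds$. A Gronwall-type argument then shows $\chi(\alpha_0)=0$ for an explicit $\alpha_0=\mathcal L/\mu$, giving the pointwise bound on $U_{x_1}$. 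This De Giorgi--Stampacchia iteration on the distribution function of $U_{x_1}$ is the genuine content of the step, and it is absent from your plan.
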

\begin{proof}
Let us fix \( k\in \mathbb{N}\). We define the set
\[
A_{\tau}(\alpha,R)=\{x\in A_\tau(\alpha)\, :\, |\nabla U_{\tau,\alpha}(x)|\geq 2\,R\ \mbox{ and }\ |\nabla U(x)|\geq 2\,R \}.
\]
By using equation \eqref{eq1834} from Lemma \ref{lemma_uc} in inequality \eqref{towards}, and then dividing by \(\theta\) and letting \(\theta\) go \(0\), we get
\[
c\,\mu\,\int_{A_{\tau}(\alpha,R)}|\nabla U-\nabla U_{\tau,\alpha}|^2\, dx\leq \int_{A_\tau(\alpha)}(f_k-(f_k)_\tau)\,(U_{\tau,\alpha}-U)\,dx, 
\]
for some universal constant $c>0$.
We now assume that \(\tau=h\,e_1\) where \(h>0\) and \(e_1\) is the first vector of the canonical basis of \(\mathbb{R}^N\). Remember that \(f_k\) is compactly supported and define for almost every \(x=(x_1, \dots, x_N)\in \mathbb{R}^N\)
\[
g_{k,h}(x_1,\dots,x_N)=\int_{x_1}^{x_1+h} f_k(t, x_2, \dots, x_N)\,dt.
\]
Observe that \(g_{k,h}\) is a smooth compactly supported function. 
Moreover, we have that $g_{k,h}/h$ converges uniformly to $f_k$, since
\[
\left|\frac{g_{k,h}(x)}{h}-f_k(x)\right|\le \frac{1}{h}\,\int_{x_1}^{x_1+h} |f_k(t,x_2,\dots,x_N)-f(x_1,x_2,\dots,x_N)|\,dt,
\]
and $f_k$ is smooth and compactly supported.
By Fubini theorem, we also have for every \(\eta\in W^{1,1}(\mathbb{R}^N)\)
\[
\int_{\mathbb{R}^N} g_{k,h}\, \eta_{x_1}\,dx = \int_{\mathbb{R}^N} \big(f_k-(f_k)_{h\,e_1}\big)\,\eta\,dx.  
\]
We apply this observation to the map \(\eta=V-U\) (extended by \(0\) outside \(\mathcal{O}\)) 
where \(V\) is defined in \eqref{eqdefvw}. Since \(\eta\) coincides with $U_{h\,e_1, \alpha}-U$ on the set $A_{h\,e_1}(\alpha)$, we obtain
\[
\begin{split}
\int_{A_{h\,e_1}(\alpha)}(f_k-(f_k)_{h\,e_1})\,(U_{h\,e_1,\alpha}-U) \,dx
&= \int_{\mathbb{R}^N} g_{k,h}\, \left((U_{h\,e_1,\alpha}-U)_+\right)_{x_1}\,dx\\
&=\int_{A_{h\,e_1}(\alpha)} g_{k,h}\, \left(U_{h\,e_1,\alpha}-U\right)_{x_1}\,dx.
\end{split}
\]
Observe that $(U_{h\,e_1,\alpha})_{x_1}=(U_{h\,e_1})_{x_1}$.
If we commute the derivative and the translation, by dividing by a factor $h^2$ we then get
\begin{equation}\label{eq816}
\int_{A_{h\,e_1}(\alpha, R)} \left|\frac{\nabla U_{h\,e_1,\alpha}- \nabla U}{h}\right|^2\,dx\le \frac{C}{\mu}\, \int_{A_{h\,e_1(\alpha)}} \frac{g_{k,h}}{h}\, \frac{(U_{x_1})_{h\,e_1}-U_{x_1}}{h}\,dx.
\end{equation}
Now, since \(\alpha\geq L_0/\mu>2R\) (by definition of \(L_0\) in \eqref{L_0} and the fact that \(0<\mu\leq 1\)), 
\[ 
1_{\{U_{x_1}>\alpha\}} \leq \liminf_{h\to 0} 1_{A_{h\,e_1}(\alpha,R)}\leq \limsup_{h\to 0} 1_{A_{h\,e_1}(\alpha)} \leq 1_{\{U_{x_1}\geq \alpha\}}.
\]
Here, we also use the fact that \(U\) is smooth on \(\overline{\mathcal{O}}\).
This implies that we can apply the dominated convergence theorem and let \(h\) go to \(0\) in \eqref{eq816}, so to get
\[
\int_{\{U_{x_1}>\alpha\}} |\nabla U_{x_1}|^2\, dx^2\,dx\le \frac{C}{\mu}\, \int_{\{U_{x_1}\ge \alpha\}} f_k\, U_{x_1\,x_1}\,dx.
\]
From this and the fact that $\nabla U_{x_1}=0$ almost everywhere on $\{U_{x_1}=\alpha\}$, we get
\[
\int_{\{U_{x_1}>\alpha\}} |\nabla U_{x_1}|^2\, dx\leq \frac{C\Lambda}{\mu} \left(\int_{\{ U_{x_1} > \alpha\}} |U_{x_1x_1}|\,dx\right),
\]
which implies by H\"older inequality
\begin{equation}
\label{eq827}
\|\nabla U_{x_1}\|_{L^2(\{U_{x_1} >\alpha\})} \leq C\,\frac{\Lambda}{\mu}\, |\{U_{x_1} >\alpha\}|^{\frac{1}{2}}.
\end{equation}
We denote \(\Theta(\alpha)=|\{x\in\mathcal{O}\, :\, U_{x_1}>\alpha\}|\) the distribution function of $U_{x_1}$. 
By Cavalieri principle, we have
\[
\begin{split}
\|U_{x_1}-\alpha\|_{L^1(\{U_{x_1}>\alpha\})}&=\int_{\alpha}^{+\infty} |\{x\in\mathcal{O}\, :\, U_{x_1}>s\}|\, ds=:\int_\alpha^{+\infty} \Theta(s)\, ds.
\end{split}
\]
By H\"older and Sobolev inequalities and using \eqref{eq827}, we obtain the following inequality for almost every  \(\alpha\geq L_0/\mu\),
\[
\int_{\alpha}^{+\infty}\Theta(s)\,ds \leq \frac{C}{\mu}\,\Theta(\alpha)^{\gamma},
\]
with \(\gamma=(N+1)/N>1\) and \(C=C(N,\Lambda)>0\).
In other words, the nonnegative nonincreasing function \(\chi(\alpha)= \int_{\alpha}^{+\infty} \Theta(s)\,ds\) satisfies the following differential inequality 
\[
\chi(\alpha) \leq \frac{C}{\mu}\,\left(-\chi'(\alpha)\right)^\gamma,\qquad \mbox{ for a.\,e. } \alpha\geq \frac{L_0}{\mu},
\]
where \(C=C(N,\Lambda)>0\). This easily implies (as in Gronwall Lemma) that \(\chi(\alpha)=0\) for every \(\alpha\geq \alpha_0\) where
\[
\alpha_0=\frac{L_0}{\mu}+\frac{\gamma}{\gamma-1}\, \left(\frac{C}{\mu}\right)^{\frac{1}{\gamma}}\,\left(\int_{L_0/\mu}^{+\infty} \Theta(s)\,ds\right)^\frac{\gamma-1}{\gamma}.
\]
Since \(\chi\left(L_0/\mu\right)\leq (C/\mu)\,\Theta\left(L_0/\mu\right)^{\gamma}\leq (C/\mu)\,|\mathcal{O}|^{\gamma}\), we get
\[
\alpha_0 \leq \frac{L_0}{\mu}  + \frac{C}{\mu}\,\frac{\gamma}{\gamma-1}\,|\mathcal{O}|^{\gamma-1}.
\]
Observe that by definition we have $|\mathcal{O}|=|\Omega_k|$. Then by Lemma \ref{lemma_approximation_sets} and the isodiametric inequality, up to choosing $k$ sufficiently large we can suppose that $|\mathcal{O}|=|\Omega_k|\le C\,(\mathrm{diam}(\Omega)+1)^N$, for a constant $C=C(N)>0$. We thus have
\[
\alpha_0\le \frac{L_0}{\mu}  + \frac{C}{\mu}\,\frac{\gamma}{\gamma-1}\,\Big(\mathrm{diam}(\Omega)+1\Big)^{(\gamma-1)N}=:\frac{\mathcal{L}}{\mu},
\]
possibly for a different constant $C=C(N,\Lambda)>0$.
Observe that $\mathcal{L}=\mathcal{L}(N,K,R,\Lambda,\mathrm{diam}(\Omega))>0$. Thus we have \(\Theta(\mathcal{L}/\mu)=0\) and hence
\[
U_{x_1}\leq \frac{\mathcal{L}}{\mu}\, \qquad \mbox{ for a.\,e. }x\in \mathcal{O}.
\]
By reproducing the previous proof with $h<0$, we can show that \(U_{x_1}\geq -\mathcal{L}/\mu\) as well. In the end \(|U_{x_1}|\leq \mathcal{L}/\mu\) and of course this is true for every partial derivative, thus \(\|\nabla U\|_{L^\infty(\mathcal{O})}\leq \sqrt{N}\mathcal{L}/\mu\).
\end{proof}
\subsection{Step 4: passage to the limit}
Since we have to pass to the limit as $k$ goes to $\infty$, it is now convenient to  come back to the original notation $u_k$ and $\Omega_k$.
Let us set 
\[
\widetilde u_k:={u_k}_{|\overline{\Omega}},\qquad k\in\mathbb{N}.
\]
By Lemma \ref{lemuniflipbd} and Ascoli-Arzel\`a Theorem, there exists a subsequence of \(\{\widetilde u_k\}_{k\in \mathbb{N}}\) (we do not relabel) which uniformly converges to some map \(v\) on \(\overline{\Omega}\). Moreover, \(v\) is still $(\mathcal{L}/\mu)-$Lipschitz continuous on $\overline\Omega$. 
\begin{lm}
\label{lemconv}
The limit function \(v\) solves \eqref{minimisation_weak}.
\end{lm}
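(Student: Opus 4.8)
The plan is to run a $\Gamma$-convergence type argument: I will show that $v$ is admissible for \eqref{minimisation_weak} and that $\mathcal{F}(v)\le\mathcal{F}(w)$ for every $w\in\varphi+W^{1,1}_0(\Omega)$, using in an essential way the $k$-independent Lipschitz estimate of Proposition \ref{lemuniflipbd}. For admissibility, we already know $v$ is $(\mathcal{L}/\mu)$-Lipschitz on $\overline{\Omega}$, so it only remains to check $v=\varphi$ on $\partial\Omega$; this is not immediate, since each $u_k$ solves a problem on the \emph{larger} domain $\Omega_k$ and only carries the datum on $\partial\Omega_k$. Here I would use the barriers of Proposition \ref{lm:barrier}: they give $(L_0/\mu)$-Lipschitz maps $\ell^-,\ell^+$ (depending on $k$, but with a $k$-independent Lipschitz constant) with $\ell^-\le u_k\le\ell^+$ on $\Omega_k$ and $\ell^-=\ell^+=\varphi_k$ on $\mathbb{R}^N\setminus\Omega_k$. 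Since $\varphi_k=\varphi$ on $\partial\Omega$, comparing $x\in\partial\Omega$ with a nearby point of $\mathbb{R}^N\setminus\Omega_k$ yields $|\ell^\pm(x)-\varphi(x)|\le C\,\mathrm{dist}(x,\partial\Omega_k)$ with $C=C(L_0/\mu,K)$; as $\Omega\Subset\Omega_k$ and $\Omega_k\to\Omega$ in the Hausdorff metric (Lemma \ref{lemma_approximation_sets}, in particular \eqref{eq1728}), $\sup_{x\in\partial\Omega}\mathrm{dist}(x,\partial\Omega_k)\to0$, and letting $k\to\infty$ in $\ell^-\le u_k\le\ell^+$ on $\partial\Omega$ together with $\widetilde u_k\to v$ uniformly gives $v=\varphi$ on $\partial\Omega$, hence $v-\varphi\in W^{1,\infty}_0(\Omega)\subset W^{1,1}_0(\Omega)$. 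The bound $\|\nabla\widetilde u_k\|_{L^\infty(\Omega)}\le\mathcal{L}/\mu$ also lets us assume, along a further subsequence, $\nabla\widetilde u_k\overset{*}{\rightharpoonup}\nabla v$ in $L^\infty(\Omega;\mathbb{R}^N)$.

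Next I would prove the lower bound $\mathcal{F}(v)\le\liminf_k\mathcal{F}_k(u_k)$. Drop the nonnegative term $\tfrac1k\int_{\Omega_k}|\nabla u_k|^2$ and split $\Omega_k=\Omega\cup(\Omega_k\setminus\Omega)$. On $\Omega_k\setminus\Omega$ one has $|\nabla u_k|\le\mathcal{L}/\mu$ and $|u_k|\le C$ (the barriers), while the $F_k$'s share a common affine minorant and $\|f_k\|_\infty\le\Lambda$, so the contributions of $\Omega_k\setminus\Omega$ are $O(|\Omega_k\setminus\Omega|)=o(1)$. On $\Omega$, since $F_k\to F$ uniformly on bounded sets and $\|\nabla u_k\|_{L^\infty}\le\mathcal{L}/\mu$ we have $\int_\Omega F_k(\nabla u_k)\,dx=\int_\Omega F(\nabla u_k)\,dx+o(1)$; by convexity of $F$ and weak lower semicontinuity, $\liminf_k\int_\Omega F(\nabla u_k)\,dx\ge\int_\Omega F(\nabla v)\,dx$; and $\int_\Omega f_k u_k\,dx\to\int_\Omega f v\,dx$ because $f_k\overset{*}{\rightharpoonup}f$ in $L^\infty(\Omega)$, $u_k\to v$ in $L^1(\Omega)$ and $\|f_k\|_\infty\le\Lambda$. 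Collecting terms, $\liminf_k\mathcal{F}_k(u_k)\ge\mathcal{F}(v)$.

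For the matching upper bound, let $w\in\varphi+W^{1,1}_0(\Omega)$, which we may assume of finite energy. A key observation is that, by \eqref{equc_bis}, the $\mu$-uniform convexity of $F$ outside $B_R$ forces $F(z)\ge c\,|z|^2-C$ for $|z|$ large, so $\mathcal{F}(w)<+\infty$ already yields $\nabla w\in L^2(\Omega)$. Extend $w$ by $w_k:=w$ on $\Omega$ and $w_k:=\varphi_k$ on $\Omega_k\setminus\Omega$; since $w=\varphi=\varphi_k$ on $\partial\Omega$, we get $w_k\in\varphi_k+W^{1,1}_0(\Omega_k)$, so $w_k$ is admissible for \eqref{minimisationk}. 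Then $\mathcal{F}_k(w_k)\to\mathcal{F}(w)$: $\int_\Omega F_k(\nabla w)\to\int_\Omega F(\nabla w)$ by dominated convergence ($F_k\nearrow F$, $F(\nabla w)\in L^1$), $\tfrac1k\int_\Omega|\nabla w|^2\to0$ since $\nabla w\in L^2(\Omega)$, $\int_\Omega f_k w\to\int_\Omega f w$, and the $\Omega_k\setminus\Omega$-terms vanish since $|\nabla\varphi_k|\le K+2$ and $|\Omega_k\setminus\Omega|\to0$. By minimality of $u_k$ we have $\mathcal{F}_k(u_k)\le\mathcal{F}_k(w_k)$, so
\[
\mathcal{F}(v)\le\liminf_k\mathcal{F}_k(u_k)\le\limsup_k\mathcal{F}_k(u_k)\le\lim_k\mathcal{F}_k(w_k)=\mathcal{F}(w).
\]
As $w$ was arbitrary and $v$ is admissible, $v$ solves \eqref{minimisation_weak}.

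The steps I expect to be the most delicate are, first, verifying $v=\varphi$ on $\partial\Omega$ — this genuinely needs the barriers of Proposition \ref{lm:barrier} with their $k$-uniform Lipschitz constant and the Hausdorff convergence $\Omega_k\to\Omega$, rather than any soft argument — and, second, carrying out the lower bound while absorbing simultaneously the three approximations $\Omega_k\to\Omega$, $F_k\to F$, $f_k\to f$: it is precisely the uniform Lipschitz estimate of Proposition \ref{lemuniflipbd} (not mere weak $W^{1,1}$ compactness) that makes the localization to $\Omega$ and the replacement of $F_k(\nabla u_k)$ by $F(\nabla u_k)$ with a uniform error legitimate, and the quadratic growth from below of $F$ — a by-product of $\mu$-uniform convexity — is what renders the regularizing term $\tfrac1k\int|\nabla\cdot|^2$ harmless on the recovery side.
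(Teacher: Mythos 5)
Your proposal is correct and follows the same architecture as the paper's proof: establish $v=\varphi$ on $\partial\Omega$, then pass to the limit in the comparison $\mathcal{F}_k(u_k)\le\mathcal{F}_k(w_k)$ for the extended competitor $w_k$. Two differences are worth flagging. For the boundary identity you use the barriers $\ell^\pm$ of Proposition \ref{lm:barrier}; the paper instead works directly with $u_k$, picking $y_k\in\partial\Omega_k$ converging to $y\in\partial\Omega$ and combining $u_k|_{\partial\Omega_k}=\varphi_k|_{\partial\Omega_k}$, $\varphi_k|_{\partial\Omega}=\varphi$, and the $k$-uniform Lipschitz constants of $u_k$ and $\varphi_k$. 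The two arguments are equivalent in substance and rest on exactly the same ingredients (the $k$-uniform Lipschitz bound plus Hausdorff convergence of $\Omega_k$ to $\Omega$). More significantly, you point out that $\mu$-uniform convexity outside $B_R$ forces $F(z)\ge c\,|z|^2-C$ (this is \eqref{eq1830} in the Appendix), so any finite-energy competitor in $\varphi+W^{1,1}_0(\Omega)$ automatically lies in $\varphi+W^{1,2}_0(\Omega)$. The paper's proof compares $v$ only with $w\in\varphi+W^{1,2}_0(\Omega)$ (as it must, to kill the term $\tfrac{1}{k}\int|\nabla w_k|^2$ via \eqref{eq797}) and then asserts minimality in $\varphi+W^{1,1}_0(\Omega)$; your quadratic-growth remark is precisely the step that makes this reduction explicit. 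The remaining bookkeeping — absorbing $\Omega_k\setminus\Omega$, $F_k\to F$, and the $*$-weak convergence $f_k\to f$ — is handled in essentially the same way in both proofs.
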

\begin{proof}
We first prove that \(v\) agrees with \( \varphi\) on \(\partial \Omega\). Let \(y\in \partial \Omega\), by Lemma \ref{lemma_approximation_sets} there exists a sequence of points \(y_k \in \partial \Omega_k\) converging to $y$.
Then
\[
|v(y)-\varphi(y)|\leq |v(y)-u_k(y)|+|u_k(y)-u_k(y_k)| + |\varphi_k(y_k)-\varphi_k(y)| .
\]
Here, we have used the fact that \(u_k|_{\partial\Omega_k}= \varphi_k|_{\partial\Omega_k}\) and \(\varphi_k|_{\partial \Omega}=\varphi\). Since the Lipschitz constants of \(u_k\) and \(\varphi_k\) can be bounded independently of \(k\), this implies \(v(y)=\varphi(y)\). Hence, \(v=\varphi\) on \(\partial \Omega\).
\vskip.2cm\noindent
We now prove that \(v\) minimizes $\mathcal{F}$ in \( \varphi+W^{1,2}_0(\Omega)\).
 Let \(w\in \varphi+W^{1,2}_0(\Omega)\), we denote by \(w_k\) the extension of \(w\) by \(\varphi_k\) out of \(\Omega\). Observe that we have \(w_k\in \varphi_k+  W^{1,2}_{0}(\Omega_k)\).
Then for every \(k\in \mathbb{N}\) we have \(\mathcal{F}_k(u_k)\leq \mathcal{F}_k(w_k)\), which gives
\begin{equation}
\label{comparison}
\begin{split}
\int_{\Omega_k} F_k(\nabla u_k) \,dx+ \int_{\Omega_k} f_k\,u_k\,dx\le \int_{\Omega_k} F_k(\nabla w_k) \,dx+\frac{1}{k}\,\int_{\Omega_k} |\nabla w_k|^2\,dx + \int_{\Omega_k} f_k\,w_k\,dx.
\end{split}
\end{equation}
By definition of $w_k$ and the fact that $F_k\leq F$, we get
\[
\begin{split}
\int_{\Omega_k} F_k(\nabla w_k)\,dx \leq \int_{\Omega} F(\nabla w)\,dx + \int_{\Omega_k\setminus\Omega}F(\nabla \varphi_k)\,dx.
\end{split}
\]
Since \(\varphi_k\) is $(K+1)-$Lipschitz , this gives
\[
\int_{\Omega_k}F_k(\nabla w_k) \,dx\leq \int_{\Omega}F(\nabla w) \,dx+ \,|\Omega_k\setminus \Omega| \,\max_{\xi \in B_{K+1}} F(\xi).
\]
By using Lemma \ref{lemma_approximation_sets}, we thus obtain
\[
\limsup_{k\to +\infty}\int_{\Omega_k}F_k(\nabla w_k)\,dx\le \int_{\Omega}F(\nabla w)\,dx .
\]
We also observe that
\begin{equation}
\label{dessous}
\begin{split}
F_k(\nabla u_k(x))&\geq   F(\nabla u_k(x)) - |F_k(\nabla u_k(x)) - F(\nabla u_k(x))| \\
&\geq F(\nabla u_k(x))-\max_{\{\xi\,:\, |\xi|\le \mathcal{L}/\mu\}} |F_k(\xi) - F(\xi)|.
\end{split}  
\end{equation}
where we used Proposition \ref{lemuniflipbd}.
The sequence \(\{F_k\}_{k\in \mathbb{N}}\) uniformly converges to \(F\) on bounded sets, thus using  \eqref{dessous} we get
\begin{equation}\label{eq792}
\begin{split}
\liminf_{k\to +\infty}\int_{\Omega_k}F_k(\nabla u_k)\,dx&\geq \liminf_{k\to +\infty}\left(\int_{\Omega}F(\nabla u_k)\,dx -|\Omega_k|\,\|F_k-F\|_{L^{\infty}(B_{\mathcal{L}/\mu})} \right)\\ &\geq \liminf_{k\to +\infty}\int_{\Omega}F(\nabla u_k)\,dx \geq \int_{\Omega}F(\nabla v)\,dx.
\end{split}
\end{equation}
In the last inequality we used the weak lower semicontinuity of the functional \(w\mapsto \int_{\Omega} F(\nabla w)\) on \(W^{1,1}(\Omega)\).
 Clearly, 
\begin{equation}\label{eq797}
\lim_{k\to \infty} \frac{1}{k} \,\int_{\Omega_k}|\nabla w_k|^2\, dx=\lim_{k\to +\infty}\left[\frac{1}{k}\int_{\Omega}|\nabla w|^2\,dx +\frac{1}{k}\int_{\Omega_k\setminus \Omega} |\nabla \varphi_k|^2\,dx\right]=0.
\end{equation}
By recalling \eqref{norme} and that $|\Omega_k\setminus\Omega|$ converges to $0$, we have
\[
\lim_{k\to\infty}\left|\int_{\Omega_k\setminus \Omega} f_k\,w_k\,dx\right|\le \Lambda\,\lim_{k\to\infty}\|\varphi_k\|_{L^{1}(\Omega_k\setminus\Omega)}=0.
\] 
By using this fact, the $\ast-$weak convergence of $f_k$ to $f$ in $L^\infty(\Omega)$ and that $w_k=w$ on $\Omega$, we get
\begin{equation}\label{eq808}
\begin{split}
\lim_{k\to \infty} \int_{\Omega_k}f_k\, w_k\,dx=\lim_{k\to\infty} \int_{\Omega} f_k\, w\,dx=\int_\Omega f\,w\,dx.
\end{split}
\end{equation}
Moreover, since \(\{\widetilde u_k\}_{k\in \mathbb{N}}\) converges to \(v\) in \(L^1(\Omega)\), with a similar argument we also have
\begin{equation}
\label{eq812}
\lim_{k\to \infty} \int_{\Omega_k}f_k\,u_k\,dx = \int_\Omega f\,v\,dx.
\end{equation}
By passing to the limit in \eqref{comparison} and using \eqref{eq792}, \eqref{eq797}, \eqref{eq808} and \eqref{eq812}, we get $\mathcal{F}(v)\le \mathcal{F}(w)$. By arbitrariness of $w$, this shows that $v$ is a solution of \eqref{minimisation_weak}.
\end{proof}

\subsection{Step 5: conclusion}
Since \(v\) is a $(\mathcal{L}/\mu)-$Lipschitz solution of \eqref{minimisation_weak}, we use Lemma \ref{propagationofregularity} to conclude that every solution $u$ of \eqref{minimisation_weak} is Lipschitz continuous. More precisely, we have the following estimate
\[
\|\nabla u\|_{L^\infty(\Omega)}\le 2\,R+\|\nabla v\|_{L^\infty(\Omega)}\le \frac{(2\,R+\mathcal{L})}{\mu}=:\frac{\mathcal{L}_0}{\mu},
\]
where we used again that $0<\mu\le 1$.
This completes the proof of Theorem \ref{teo:minimisationk}.

\section{Proof of the Main Theorem}
\label{sec:5}

Finally, we come to the proof of the Main Theorem. 
We will need the following ``density in energy'' result, whose proof can be found in \cite[Theorem 4.1]{BMT}. The original statement is indeed fairly more general, we give a version adapted to our needs.
\begin{lm}[\cite{BMT}]
\label{lm:BMT}
Let $\Omega\subset\mathbb{R}^N$ be an open bounded convex set. Let $F:\mathbb{R}^N\to\mathbb{R}$ be a convex function, $f\in L^N(\Omega)$ and $\varphi:\mathbb{R}^N\to\mathbb{R}$ a Lipschitz continuous function. Then for every $u\in\varphi+W^{1,1}_0(\Omega)$ such that 
\[
\int_\Omega |F(\nabla u)|\,dx<+\infty,
\]
there exists a sequence $\{u_k\}_{k\in\mathbb{N}}\subset \varphi+W^{1,\infty}_0(\Omega)$ such that
\begin{equation}
\label{pierre}
\lim_{k\to\infty} \|u_k-u\|_{W^{1,1}(\Omega)}=0\qquad \mbox{ and }\qquad \lim_{k\to\infty} \int_\Omega \Big[F(\nabla u_k)+f\,u_k\Big]\,dx=\int_\Omega\Big[ F(\nabla u)\,+f\,u\Big]\,dx.
\end{equation}
\end{lm}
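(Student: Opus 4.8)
The plan is to build the approximating sequence by first contracting the support of $u-\varphi$ into a compact subset of $\Omega$ and then mollifying, so that the resulting competitors are \emph{exactly} equal to $\varphi$ near $\partial\Omega$, and to keep the energy under control \emph{only by convexity} (Jensen's inequality), since no growth condition on $F$ is available.

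\textbf{Reductions.} Since $F$ is convex and finite-valued, subtracting the supporting affine function $\ell(\xi)=F(0)+\langle p,\xi\rangle$ at the origin ($p\in\partial F(0)$) we may assume $F\ge 0$ and $F(0)=0$: indeed $\int_\Omega \ell(\nabla u_k)=F(0)\,|\Omega|+\langle p,\int_\Omega\nabla u_k\rangle\to\int_\Omega\ell(\nabla u)$ as soon as $u_k\to u$ in $W^{1,1}(\Omega)$, so it suffices to prove the statement for $F-\ell$. Next, since $\varphi$ is bounded on $\overline\Omega$, for $M>\|\varphi\|_{L^\infty(\overline\Omega)}$ the truncation $u^M=\max(-M,\min(M,u))$ still satisfies $u^M-\varphi\in W^{1,1}_0(\Omega)$, one has $u^M\to u$ in $W^{1,1}(\Omega)$ and, using $F(0)=0$,
\[
\int_\Omega F(\nabla u^M)=\int_{\{|u|<M\}}F(\nabla u)\,dx\xrightarrow[M\to\infty]{}\int_\Omega F(\nabla u)\,dx
\]
by dominated convergence; moreover $W^{1,1}(\Omega)\hookrightarrow L^{N'}(\Omega)$ and $f\in L^N(\Omega)$ give $\int_\Omega f\,u^M\to\int_\Omega f\,u$. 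Hence, by a diagonal argument, we may assume $u\in L^\infty(\Omega)$.

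\textbf{Construction.} Fix $x_0\in\Omega$, set $w=u-\varphi\in W^{1,1}_0(\Omega)$ extended by $0$ to $\mathbb{R}^N$, and for $\lambda>1$ put $w^\lambda(x)=\lambda^{-1}\,w(x_0+\lambda(x-x_0))$. Then $\nabla w^\lambda(x)=(\nabla w)(x_0+\lambda(x-x_0))$ — crucially, \emph{with no scaling factor on the gradient} — $\operatorname{supp}w^\lambda\subset x_0+\lambda^{-1}(\overline\Omega-x_0)\Subset\Omega$, and $w^\lambda\to w$ in $W^{1,1}(\mathbb{R}^N)$ as $\lambda\to1^+$. Mollifying at scale $\varepsilon<\operatorname{dist}(\operatorname{supp}w^\lambda,\partial\Omega)$ gives $w^\lambda_\varepsilon:=w^\lambda*\rho_\varepsilon\in C^\infty_c(\Omega)$, so the functions $u_{\lambda,\varepsilon}:=\varphi+w^\lambda_\varepsilon$ lie in $\varphi+C^\infty_c(\Omega)\subset\varphi+W^{1,\infty}_0(\Omega)$ and converge to $u$ in $W^{1,1}(\Omega)$ — hence in $L^{N'}(\Omega)$ — as $\lambda\to1^+$, $\varepsilon\to0^+$; in particular $\int_\Omega f\,u_{\lambda,\varepsilon}\to\int_\Omega f\,u$. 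Writing the convolution as an average against the probability density $\rho_\varepsilon$,
\[
\nabla u_{\lambda,\varepsilon}(x)=\int_{\mathbb{R}^N}\Big[\nabla\varphi(x)+(\nabla w)\big(x_0+\lambda(x-z-x_0)\big)\Big]\rho_\varepsilon(z)\,dz,
\]
so Jensen's inequality yields, with no dilation of $\nabla u$,
\[
F(\nabla u_{\lambda,\varepsilon}(x))\le\int_{\mathbb{R}^N}F\Big((\nabla u)\big(x_0+\lambda(x-z-x_0)\big)+e_{\lambda,z}(x)\Big)\rho_\varepsilon(z)\,dz,
\]
where $e_{\lambda,z}(x)=\nabla\varphi(x)-(\nabla\varphi)(x_0+\lambda(x-z-x_0))$ satisfies $|e_{\lambda,z}|\le 2\,\mathrm{Lip}(\varphi)$ and, for $z$ in the support of $\rho_\varepsilon$, tends to $0$ in $L^1(\Omega)$ (hence in measure, being also bounded) as $\lambda\to1^+$, $\varepsilon\to0^+$. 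Integrating over $\Omega$ and changing variables, and using that $\nabla u=\nabla\varphi\in L^\infty$ outside $\Omega$ so that the contribution of the thin shell swept out by the change of variables vanishes in the limit, one is left with estimating $\limsup\int_\Omega F(\nabla u+e)$ over bounded errors $e\to0$ in measure; combined with weak lower semicontinuity of $v\mapsto\int_\Omega F(\nabla v)$ on $W^{1,1}(\Omega)$ (which gives $\liminf\ge\int_\Omega F(\nabla u)$), the proof reduces to $\limsup\int_\Omega F(\nabla u+e_{\lambda,z})\le\int_\Omega F(\nabla u)$.

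\textbf{The obstacle.} This last step is precisely where the lack of a growth condition bites: a bounded perturbation $e$ of $\nabla u$ tending to $0$ in measure need not keep $\int_\Omega F(\nabla u+e)$ close to $\int_\Omega F(\nabla u)$, since $F$ may grow arbitrarily fast and concentrate the extra mass on the (small) set where $|\nabla u|$ is large — this is exactly the Lavrentiev-type phenomenon the theorem rules out, and the reason it is isolated as a separate result. To close the gap one exploits additional structure: either (i) $F$ is superlinear — automatic in our application by Lemma \ref{lm:superlinear} — so that $\{|\nabla u|>t\}$ has small measure in a quantitative way and the family $\{F(\nabla u_{\lambda,\varepsilon})\}$ is equi-integrable by a de la Vall\'ee--Poussin argument, which licenses passage to the limit; or (ii) one refines the construction near $\partial\Omega$, where the mismatch $e_{\lambda,z}$ originates, by gluing the interior mollification with explicit affine (Lipschitz) comparison functions adapted to $\varphi$, using the convexity of $\Omega$ and of $F$ to control the energy in that boundary layer. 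Either way one obtains $\lim\int_\Omega[F(\nabla u_{\lambda,\varepsilon})+f\,u_{\lambda,\varepsilon}]=\int_\Omega[F(\nabla u)+f\,u]$, and a diagonal sequence produces the $u_k$ of the statement. I expect the main difficulty to be exactly this quantitative control of the energy in the boundary layer, in the absence of any upper bound on $F$.
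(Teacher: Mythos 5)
Your construction — centering a dilation at $x_0$, contracting the support of $u-\varphi$ strictly into $\Omega$, mollifying, and controlling the energy only via Jensen — is the right skeleton, and your handling of the lower-order term (strong $W^{1,1}$ convergence, the Sobolev embedding $W^{1,1}(\Omega)\hookrightarrow L^{N'}(\Omega)$, and $f\in L^N(\Omega)$) is exactly the observation the paper itself adds. But the paper does not reprove the core density result: it cites \cite{BMT} for the case $f\equiv 0$ (a reference whose title, on the Lavrentiev phenomenon, names precisely the obstruction you ran into) and supplies only the $f$-term remark. Your attempt to give a self-contained proof of the $f\equiv 0$ case stalls at the step you yourself flag.

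That gap is genuine, and neither of your two proposed repairs closes it. The needed estimate, $\limsup\int_\Omega F(\nabla u + e_{\lambda,z})\,dx\le\int_\Omega F(\nabla u)\,dx$ for a perturbation $e_{\lambda,z}$ that is uniformly bounded and tends to $0$ only in measure, can indeed fail for a general convex $F$ with $\int_\Omega F(\nabla u)\,dx<\infty$: a bounded shift on the small set where $|\nabla u|$ is already large can make $\int_\Omega F(\nabla u+e)\,dx$ blow up when $F$ has arbitrarily fast growth. Route (i) is incorrect as stated: superlinearity of $F$ together with $\int_\Omega F(\nabla u)\,dx<\infty$ gives, via de la Vall\'ee--Poussin, equi-integrability of $\{\nabla u_{\lambda,\varepsilon}\}$, not of $\{F(\nabla u_{\lambda,\varepsilon})\}$. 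For the latter you would need $\int_\Omega\Psi\big(F(\nabla u)\big)\,dx<\infty$ for some superlinear $\Psi$, which is not part of the hypotheses. Route (ii), a boundary-layer gluing with affine comparison functions, is plausible and in the spirit of what \cite{BMT} actually does, but you leave it entirely as a black box, and it is the technical heart of the result. So the proposal correctly identifies the strategy and the obstruction, and correctly reproduces the paper's (one-line) contribution concerning $f$, but it does not prove the statement.
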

\begin{proof}
For the case $f\equiv 0$, the proof is contained in \cite{BMT}. In order to cover the case $f\in L^{N}(\Omega)$, it is sufficient to observe that by Sobolev embedding and strong convergence in $W^{1,1}(\Omega)$, the sequence $\{u_{k}\}_{k\in\mathbb{N}}$ also converges weakly in $L^{N'}(\Omega)$. Then the result easily follows.
\end{proof}

\subsection{Step 1: reduction to $\mu-$uniformly convex problems}

For every $Q>R$, we consider the minimization problem
\begin{displaymath}
\tag{$P_Q$}
\label{minimisationQ}
\min\left\{\int_{\Omega} \Big[F_Q(\nabla u) +f\, u\Big] \,dx\, :\, u-\varphi\in W^{1,1}_0(\Omega)\right\},
\end{displaymath}
where $F_Q:\mathbb{R}^N\to\mathbb{R}$ is a convex function such that 
\begin{enumerate}
\item[{\it i)}] $F_Q\equiv F$ in $B_Q$;
\vskip.2cm
\item[{\it ii)}] $F$ is $\mu_Q-$uniformly convex outside the ball $B_R$, with 
\[
\mu_Q=\min\left\{1,\min_{t\in[2\,R,4\,Q]} \Phi(t)\right\}.
\]
\end{enumerate}
see Lemma \ref{lm:pierresque} below. Observe that $0<\mu_Q\le 1$. By Theorem \ref{teo:minimisationk} and \eqref{lipschitz}, we get that every minimizer $u_Q$ of \eqref{minimisationQ} is such that
\[
\|\nabla u_Q\|_{L^\infty(\Omega)}\le \frac{\mathcal{L}_0}{\mu_Q},
\] 
with $\mathcal{L}_0$ independent of $Q$. We now take $Q\gg R$ sufficiently large so that
\[
\frac{\mathcal{L}_0}{\mu_Q}\le Q-1.
\]
Observe that this is possible, thanks to the definition of $\mu_Q$ and property \eqref{superlinear} of the function $\Phi$. Thanks to this choice and the construction of the function $F_Q$, we thus get
\begin{equation}
\label{etun}
F_Q(\nabla u_Q) =F(\nabla u).
\end{equation}
Let us take $v\in \varphi+W^{1,\infty}_0(\Omega)$
and $\theta\in(0,1)$ such that
\[
\theta\, \left(Q-1-\|\nabla v\|_{L^\infty(\Omega)}\right)+\|\nabla v\|_{L^{\infty}(\Omega)}\le Q.
\]
Then the function $\theta\,u_Q+(1-\theta)\,v$ is such that
\[
\|\theta\,\nabla u_Q+(1-\theta)\,\nabla v\|_{L^\infty(\Omega)}\le Q,
\]
so that by using again $F\equiv F_Q$ in the ball $B_Q$ and the convexity of $F$, we get
\begin{equation}
\label{etdeux}
\begin{split}
F_Q(\theta\,\nabla u_Q+(1-\theta)\,\nabla v)&=F(\theta\,\nabla u_Q+(1-\theta)\,\nabla v)\\
&\le\theta\, F(\nabla u_Q)+(1-\theta)\, F(\nabla v).
\end{split}
\end{equation}
We observe that $\theta\,u_Q+(1-\theta)\,v$ is admissible for \eqref{minimisationQ}, then by using the minimality of $u_Q$, \eqref{etun} and \eqref{etdeux}, we obtain
\[
\int_{\Omega} F(\nabla u_Q) +f\, u_Q\,dx\le \theta\, \int_{\Omega} \Big[F(\nabla u_Q) +f\, u_Q\Big] \,dx+(1-\theta)\, \int_\Omega \Big[F(\nabla v) +f\, v \Big]\,dx,
\]
which finally shows that $u_Q$ minimizes the original Lagrangian $\mathcal{F}$ among Lipschitz functions, i.e. $u_Q$ is a solution of
\[
\min\left\{\int_{\Omega} \Big[F(\nabla u) +f\, u\Big] \,dx\, :\, u-\varphi\in W^{1,\infty}_0(\Omega)\right\}.
\]
\subsection{Step 2: conclusion by density}
In order to complete the proof, it is only left to prove that $u_Q$ minimizes $\mathcal{F}$ among $W^{1,1}$ functions as well.
At this aim, let $v\in \varphi+W^{1,1}_0(\Omega)$ be such that
\[
\int_{\Omega} \Big[F(\nabla v)+f\,v\Big]\,dx<+\infty.
\]
Since $f\,v\in L^1(\Omega)$, this implies that $\int_\Omega F(\nabla v)<+\infty$.
By noting $F_+$ and $F_-$ the positive and negative parts of $F$, we observe that by \eqref{M}, we can infer
\[
\int_\Omega F_-(\nabla v)\,dx\le \max_{B_r} |F|\, |\Omega|. 
\] 
This and the fact that
\[
+\infty>\int_\Omega F(\nabla v)\,dx=\int_{\Omega} F_+(\nabla v)\,dx-\int_\Omega F_-(\nabla v)\,dx,
\]
imply $F(\nabla v)\in L^1(\Omega)$.
By Lemma \ref{lm:BMT}, then there exists a sequence $\{v_k\}_{k\in\mathbb{N}}\subset\varphi+W^{1,\infty}_0(\Omega)$ such that \eqref{pierre} holds. 
By using the minimality of $u_Q$ in $\varphi+W^{1,\infty}_0(\Omega)$, we get
\[
\int_{\Omega} \Big[F(\nabla u_Q) +f\, u_Q\Big] \,dx\le \int_{\Omega} \Big[F(\nabla v_k) +f\, v_k\Big] \,dx.
\]
If we now pass to the limit and use \eqref{pierre}, we obtain that $u_Q$ is a Lipschitz solution of \eqref{minimisation}. By appealing again to Lemma \ref{propagationofregularity}, we finally obtain that every solution of \eqref{minimisation} is globally Lipschitz continuous. This concludes the proof of the Main Theorem.

\section{More general lower order terms}
\label{sec:6}

In this section, we consider more general functionals of the form 
\[
\mathcal{F}(u)=\int_{\Omega} \Big[F(\nabla u) + G(x,u)\Big] \,dx,
\]
where \(G:\Omega \times \mathbb{R} \to \mathbb{R}\) is a measurable function which satisfies the following assumption:
\begin{itemize}
\item[({\it HG})] 
\begin{itemize}
\item[$\bullet$] there exists  a positive \(g\in L^\infty(\Omega)\) such that 
\[
|G(x,u)-G(x,v)|\leq g(x)\,|u-v|,\qquad \mbox{ for a.\,e. }x\in\Omega, \mbox{ every }u,v\in\mathbb{R};
\]
\item[$\bullet$] there exists \(b\in L^{N'}(\Omega)\) such that 
\[
\int_\Omega |G(x, b(x))|\,dx<+\infty. 
\]
\end{itemize}
\end{itemize}
\begin{oss}
As a consequence of {\it (HG)}, we have that for every \(v\in L^{N'}(\Omega)\) (in particular for every \(v\in \varphi + W^{1,1}_0(\Omega)\)), the map \(x\mapsto G(x, v(x))\) is in \(L^{1}(\Omega)\).
\end{oss}
We then have the following generalization of the Main Theorem.
\begin{teo}
\label{coroglobreg}
Let \(\Omega\subset\mathbb{R}^N\) be a bounded convex open set, \(\varphi:\mathbb{R}^N \to \mathbb{R}\) a Lipschitz function, \(F:\mathbb{R}^{N}\to \mathbb{R}\) a convex function and $G:\Omega\to \mathbb{R}$ a map satisfying (HG). We consider the following problem
\begin{displaymath}
\tag{$P_G$}
\label{minimisationG}
\min\left\{\int_{\Omega} \Big[F(\nabla u) +G(x,u)\Big] \,dx\, :\, u-\varphi\in W^{1,1}_0(\Omega)\right\}.
\end{displaymath}
We assume that \(\varphi|_{\partial \Omega}\) satisfies the bounded slope condition of rank $K>0$ and that \(F\) is $\Phi-$uniformly convex outside some ball \(B_R\).
Then problem \eqref{minimisationG} admits a solution and every such  solution is Lipschitz continuous.
\end{teo}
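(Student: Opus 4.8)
The strategy is to reduce the general lower order term $G(x,u)$ to the linear case $f(x)\,u$ already covered by the Main Theorem, via a freezing argument on the unknown solution itself. First I would establish existence: hypothesis \textit{(HG)} gives $|G(x,u)|\le g(x)\,|u|+|G(x,b(x))|\le \|g\|_{L^\infty}\,|u-b(x)|+g(x)\,|b(x)|+|G(x,b(x))|$, and since $g\in L^\infty(\Omega)\subset L^N(\Omega)$ while $|G(\cdot,b(\cdot))|\in L^1(\Omega)$, the coercivity bound \eqref{Gip} holds with $g_1=g+|b|\cdot$const (after absorbing the $L^{N'}$ function $b$ appropriately into $g_2$); condition \eqref{Gip_bis} is met by taking $u_*$ the harmonic-type extension of $\varphi$, which is Lipschitz. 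Thus Proposition \ref{prop:existence} applies and \eqref{minimisationG} admits at least one solution $u$.

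The heart of the matter is the Lipschitz bound. Let $u$ be any solution of \eqref{minimisationG} and set
\[
f(x):=\frac{G(x,u(x))-G(x,0)}{u(x)}\quad\text{where }u(x)\neq 0,\qquad f(x):=0\ \text{otherwise}.
\]
By the Lipschitz bound in \textit{(HG)}, $|f(x)|\le g(x)$, so $f\in L^\infty(\Omega)$ with $\|f\|_{L^\infty(\Omega)}\le\|g\|_{L^\infty(\Omega)}$. The subtle point is that, because $G(x,\cdot)$ need not be convex in $u$, $u$ need not minimize the \emph{linearized} functional $\int[F(\nabla w)+f\,w]$ over all competitors. However, what \emph{is} true is the following comparison: for any admissible competitor $w$, convexity of $F$ is not needed here, only that
\[
\int_\Omega\big[F(\nabla u)+G(x,u)\big]\,dx\le\int_\Omega\big[F(\nabla w)+G(x,w)\big]\,dx,
\]
and by \textit{(HG)}, $G(x,w)\le G(x,u)+g(x)\,|w-u|$ pointwise. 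The clean way to proceed is therefore to run the translation/competitor argument of Section \ref{sec:4} \emph{directly} on $u$ rather than through a linearized problem: comparing $u$ with $u(\cdot+\tau)-\alpha|\tau|$ as in Step 2--3, the lower order contribution is controlled by $\int|G(x,U_{\tau,\alpha})-G(x,U)|\le \|g\|_{L^\infty}\int|U_{\tau,\alpha}-U|$, which is exactly the structure that was handled when the term was $f\cdot u$ (up to replacing $f_k-(f_k)_\tau$ by a bound of size $2\|g\|_{L^\infty}$). Consequently every estimate in the proof of Theorem \ref{teo:minimisationk} goes through verbatim with $\|f\|_{L^\infty(\Omega)}$ replaced by $\|g\|_{L^\infty(\Omega)}$, provided one first approximates $G$ suitably: one replaces $f_k$ by mollifications of $g$ and the lower order term by $G_k(x,u)$ built to satisfy \textit{(HG)} with the same constant and to converge appropriately. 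The barrier construction of Step 1 only ever used the bound $f_k^+\le\Lambda$, which is now replaced by $g\le\|g\|_{L^\infty}$, so the barriers $\psi_y^\pm$ and the $L^\infty$ bound on $u$ survive unchanged.

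Once a uniform Lipschitz estimate $\|\nabla u\|_{L^\infty(\Omega)}\le\mathcal{L}$ is in hand for a \emph{particular} solution produced by approximation, the last step — propagating regularity to \emph{every} solution — requires a small variant of Lemma \ref{propagationofregularity}, since $\mathcal{F}$ is no longer convex as a functional of $u$. Here I would argue as follows: if $u_1$ is the Lipschitz solution and $u_2$ any other solution, then using \textit{(HG)} and the convexity of $F$,
\[
\mathcal{F}\!\left(\tfrac{u_1+u_2}{2}\right)\le\tfrac12\int_\Omega\!\big[F(\nabla u_1)+F(\nabla u_2)\big]\,dx+\tfrac12\int_\Omega\!\big[G(x,u_1)+G(x,u_2)\big]\,dx+\tfrac12\|g\|_{L^\infty}\!\int_\Omega|u_1-u_2|\,dx,
\]
which is not quite enough on its own; instead one uses that $\min(\mathcal F)=\mathcal F(u_1)=\mathcal F(u_2)$ together with the strict convexity of $F$ outside $B_R$ applied to the midpoint, exactly as in Lemma \ref{propagationofregularity}, after observing that the $G$-contribution to $\mathcal F(\frac{u_1+u_2}{2})$ differs from the average of the $G$-contributions by at most a term that is forced to vanish because $\frac{u_1+u_2}{2}$ is itself a minimizer (being a convex combination cannot strictly decrease $\mathcal F$, and cannot keep it constant unless $\nabla u_1=\nabla u_2$ a.e.\ on $\{|\nabla u_i|>R\}$). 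Thus $|\nabla u_1-\nabla u_2|\le 2R$ a.e., and $u_2$ is Lipschitz with the stated bound. \textbf{The main obstacle} is precisely this non-convexity of $\mathcal{F}$ in $u$: one must check carefully that the midpoint argument of Lemma \ref{propagationofregularity} still closes despite $G$ not being convex, and that the approximation scheme for $G$ in Section \ref{sec:4} can be set up preserving the single-constant Lipschitz bound in \textit{(HG)}. Everything else — existence, barriers, the translation estimate — is a routine transcription of Sections \ref{sec:3}--\ref{sec:5} with $f\cdot u$ replaced by $G(x,u)$ and $\|f\|_{L^\infty}$ replaced by $\|g\|_{L^\infty}$.
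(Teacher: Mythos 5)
Your existence argument follows the paper's and is fine. The real gap is in the regularity step. You correctly observe that the secant-slope linearization $f(x)=(G(x,u(x))-G(x,0))/u(x)$ does not make $u$ a minimizer of the linearized functional; but the fallback you choose --- rerunning the translation argument of Section \ref{sec:4} \emph{directly} on $u$, replacing the quantity $f_k-(f_k)_\tau$ by a bound of size $2\|g\|_{L^\infty}$ --- does not close. The cancellation in $f_k-(f_k)_\tau$ is essential: for $\tau=h\,e_1$ this difference is $O(h)$, and the Fubini/$g_{k,h}$ manoeuvre of Step~3 converts it, after dividing by $h^2$, into $f_k$ times a second difference quotient of $U_{x_1}$, which is what passes to the limit in \eqref{eq816}. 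If instead the $G$-contribution is only bounded by $2\|g\|_{L^\infty}\int_{A_\tau(\alpha)}|U_{\tau,\alpha}-U|\,dx$, then dividing by $h^2$ leaves a right-hand side of order $1/h$ that blows up as $h\to 0$. So the assertion that "every estimate in the proof of Theorem \ref{teo:minimisationk} goes through verbatim" is false.

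The idea you are missing is the paper's linearization via the \emph{derivative}, not a secant: assume first that $G(x,\cdot)$ is differentiable and set $f(x)=G_u(x,u(x))$. Testing minimality along the path $(1-\theta)u+\theta v$, using the convexity of $F$ in the gradient variable, and letting $\theta\to 0^+$ with dominated convergence yields
\[
\int_\Omega\Big[F(\nabla u)+f\,u\Big]\,dx\le\int_\Omega\Big[F(\nabla v)+f\,v\Big]\,dx\qquad\text{for every admissible }v,
\]
so $u$ really is a minimizer of the linear problem \eqref{minimisation} with this $f$, and the Main Theorem applies as a black box --- no rerun of Steps 1--5 is needed. The general case is then handled by mollifying $G$ in the $u$-variable only, which trivially preserves \textit{(HG)} with the same $g$ and $b$. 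Note also that in the differentiable case this linearization already shows that \emph{every} solution of \eqref{minimisationG} is Lipschitz (each solution solves its own linearized problem), so propagation is not needed at that stage. Your concern about Lemma~\ref{propagationofregularity} for non-convex-in-$u$ functionals is legitimate, but the remedy you sketch --- that the midpoint $\tfrac{u_1+u_2}{2}$ "is itself a minimizer" because a convex combination cannot strictly decrease $\mathcal F$ --- is circular: $\mathcal F$ at the midpoint being $\ge\min$ is automatic, while the opposite inequality $\le\min$ is exactly what convexity of $\mathcal F$ in $u$ would give you and is precisely what is missing.
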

\begin{proof}
Assumption \((HG)\) implies that for almost every \(x\in \Omega\) and every \(v\in \mathbb{R}\),
\[
G(x,v)\geq G(x,b(x))-g(x)\,|v-b(x)|\geq \Big[G(x,b(x))-g(x)\,|b(x)|\Big]\ -g(x)\,|v|
\]
and the function into square brackets is in \(L^{1}(\Omega)\). Thus we can apply Proposition \ref{prop:existence} and deduce existence of a solution  \(u\in W^{1,1}_0(\Omega)+\varphi\). 
We now divide the proof in two parts. 
\vskip.2cm\noindent
{\it Part I.} Let us first assume that for almost every \(x\in \Omega\), \(v\mapsto G(x, v)\) is differentiable. 
We denote by \(f\) the measurable map 
\[
f(x)=G_u(x, u(x)). 
\]
Observe that \(|f|\leq g\) almost everywhere on \(\Omega\).
For every \(v\in \varphi+W^{1,1}_0(\Omega)\) and every \(\theta\in (0,1)\), the minimality of $u$ implies 
\[
\mathcal{F}(u)\leq \mathcal{F}((1-\theta)\, u + \theta\, v).
\] 
Hence, by convexity of \(F\),
\[
\int_{\Omega} F(\nabla u) \,dx \leq  \int_{\Omega} F(\nabla v)\,dx + \int_{\Omega} 
\left[\frac{G(x, u+\theta\, (v-u))-G(x,u)}{\theta}\right]\,dx. 
\]
Thanks to {\it (HG)}, we can apply the dominated convergence theorem in the right hand side to get
\[
\int_{\Omega} \Big[F(\nabla u) +f\, u\Big]\,dx \leq  \int_{\Omega} \Big[F(\nabla v) + f\,v\Big]\,dx. 
\]
This proves that  \(u\) is a minimum of the initial problem \eqref{minimisation} to which the Main Theorem applies. In particular, \(u\) is $\mathcal{L}-$Lipschitz continuous, where \(\mathcal{L}\) depends on \(N, \Phi, K,R,\mathrm{diam}(\Omega)\) and \(\|g\|_{L^{\infty}(\Omega)}\).
This proves the statement under the additional assumption that  \(G\) is differentiable with respect to \(u\).
\vskip.2cm\noindent
{\it Part II.} In the general case, we introduce the sequence 
\[
G_{\varepsilon}(x,u)=\int_{\mathbb{R}}G(x,u-v)\,\rho_{\varepsilon}(v)\, dv,
\] 
where \(\rho_{\varepsilon}\) is a smooth convolution kernel.  Observe that \(G_{\varepsilon}\) satisfies {\it (HG)} with the same functions \(g\) and \(b\) and is differentiable with respect to \(u\). 
Hence, we can apply Proposition \ref{prop:existence} again to obtain a solution \(u_{\varepsilon}\) to 
\[
\min\left\{\int_{\Omega} \Big[F(\nabla u) +G_\varepsilon(x,u)\Big] \,dx\, :\, u-\varphi\in W^{1,1}_0(\Omega)\right\}.
\]
By Part I of the proof, we know that \(u_{\varepsilon}\) is $\alpha_1-$Lipschitz continuous, with $\alpha_1$ independent of \(\varepsilon\). Up to a subsequence, this net of minimizers thus converges to a Lipschitz continuous function, which solves \eqref{minimisationG}. This last assertion can be established as in Lemma \ref{lemconv}. In view of Lemma \ref{propagationofregularity}, this completes the proof of Theorem \ref{coroglobreg}.
\end{proof}

When \(\Omega\) is a uniformly convex set, every \(C^{1,1}\) map \(\varphi : \partial\Omega \to \mathbb{R}\) satisfies the bounded slope condition. In contrast, this condition becomes more restrictive when \(\partial \Omega\) contains affine faces. For instance, if \(\Omega\) is a convex polytope, the bounded slope condition requires  \(\varphi\) to be affine on each face of \(\partial \Omega\). In \cite{Cl}, Clarke has introduced the lower bounded slope condition, which can be seen as one half of the full two sided bounded slope condition. 

\begin{definition}
\label{definition_lbsc}
Let \(\Omega\) be a bounded open set in \(\mathbb{R}^N\) and $K>0$.
We say that a map \(\varphi : \partial \Omega \to \mathbb{R}\) satisfies the {\it lower bounded slope condition of rank $K$} if for every \(y\in \partial \Omega\), there exists \(\zeta_{y}^- \in \mathbb{R}^N\) such that \(|\zeta_{y}^-|\leq K\) and 
\begin{equation}
\label{eqLBSC}
\varphi(y)+\langle \zeta_{y}^-, x-y\rangle \leq \varphi(x) ,\qquad \mbox{ for every } x\in \partial \Omega.
\end{equation}
\end{definition}

It follows from the proof of Lemma \ref{lemmabsc} that a function \(\varphi : \partial \Omega \to \mathbb{R}\) satisfies the lower bounded slope condition if and only if it is the restriction to \(\partial \Omega\) of a convex function defined on  \(\mathbb{R}^N\).
The Main Theorem has the following variant when the bounded slope condition is replaced by the weaker lower bounded slope condition:

\begin{teo}
\label{teoreglbsc}
Let \(\Omega\) be a bounded convex  open set in \(\mathbb{R}^N\), \(\varphi:\mathbb{R}^N \to \mathbb{R}\) a Lipschitz continuous function, \(F:\mathbb{R}^{N}\to \mathbb{R}\) a convex function and $f\in L^\infty(\Omega)$. 
We assume that \(\varphi|_{\partial \Omega}\) satisfies the lower bounded slope condition of rank $K\ge 0$ and that \(F\) is $\mu-$uniformly convex outside some ball \(B_R\).
Then every solution \(u\) of \eqref{minimisation} is locally Lipschitz continuous on \(\Omega\).
\end{teo}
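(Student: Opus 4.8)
The plan is to retrace the proof of the Main Theorem, keeping only the \emph{one-sided} (lower) part of the barrier construction and replacing the global translation argument of Section~\ref{sec:4} by a \emph{localized} one.

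\emph{Step 1 (reduction and regularization).} As in Sections~\ref{sec:4}--\ref{sec:5}, I would first reduce to the case where $F$ is $\mu$-uniformly convex outside $B_R$, via the truncation of Section~\ref{sec:5} (replace $F$ by $F_Q$ as in Lemma~\ref{lm:pierresque}); here $Q$ will be allowed to depend on the interior subdomain that is fixed below, which is harmless since the conclusion is local. Then regularize: smooth uniformly convex sets $\Omega_k\supset\Omega$ and smooth data $\varphi_k$ from Lemma~\ref{lemma_approximation_sets}, smooth uniformly convex $F_k$ satisfying \eqref{eqFkuc1}, and the extra term $\tfrac1k\int|\nabla v|^2$. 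Since $\varphi|_{\partial\Omega}$ satisfies the \emph{lower} bounded slope condition, it is the trace of a convex function, so the functions $\varphi^-$ and $\varphi_k$ produced by Lemmas~\ref{lemmabsc} and \ref{lemma_approximation_sets} are still available (only $\varphi^+$ is lost). One therefore keeps the lower barrier $\ell^-=\sup_{y\in\partial\mathcal O}\psi_y^-$ of Proposition~\ref{lm:barrier}, built from the slope vectors $\zeta_y^-$ of the lower bounded slope condition: the same subsolution computation gives $u_k\ge\ell^-$ on $\mathcal O$, with $\ell^-$ Lipschitz of rank $\lesssim L_0/\mu$ and $\ell^-=\varphi$ on $\partial\Omega$. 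For the upper bound one no longer has a sharp barrier, but the ``flat'' supersolutions $\psi_y^+(x)=\sup_{\partial\Omega}\varphi+1-T\,a_{k,y}(x)$ (which use only that $\varphi|_{\partial\Omega}$ is bounded) yield $u_k\le\inf_y\psi_y^+$ and hence a uniform bound $\|u_k\|_{L^\infty(\mathcal O)}\le M_0$.

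\emph{Step 2 (localized Lipschitz estimate — the heart of the matter).} Fix $\Omega'\Subset\Omega$ and a cut-off $\eta\in C_c^\infty(\Omega)$ with $\eta\equiv1$ on $\Omega'$. Following Step~2 of Section~\ref{sec:4}, compare $u_k$ with the translated map $u_{k,\tau,\alpha}(x)=u_k(x+\tau)-\alpha|\tau|$ (with $\tau=h\,e_1$, $|\tau|$ small, $\alpha\simeq L_0/\mu$), but \emph{glued back to $u_k$ by $\eta$}: the competitor is $u_k+\eta\,(u_{k,\tau,\alpha}-u_k)_+$ on $\Omega_k\cap(\Omega_k-\tau)$. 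On the part of $\partial\big(\Omega_k\cap(\Omega_k-\tau)\big)$ coming from the translated boundary $\partial\Omega_k-\tau$, admissibility is guaranteed exactly as in Lemma~\ref{lemcompuut} by the sharp lower barrier $\ell^-$ — this is the only place where the lower bounded slope condition is genuinely used; on the part coming from $\partial\Omega_k$ itself the cut-off makes the competitor coincide with $u_k$ once $|\tau|<\mathrm{dist}(\mathrm{supp}\,\eta,\partial\Omega)$, so no upper barrier is needed. Plugging this into the minimality of $u_k$, letting $\theta\to0$ as in \eqref{towards}, and using the $\mu$-uniform convexity of $F_k$ outside $B_{R+1}$ on $\{|\nabla u_k|>R\}$ together with the trivial bound on $\{|\nabla u_k|\le R\}$ (as in Step~3 of Section~\ref{sec:4}), one obtains a Caccioppoli-type inequality for $(u_{k,x_1}-\alpha)_+$ carrying the cut-off $\eta$, with constants depending only on $N,R,\mu,K,\|f\|_{L^\infty},M_0$ and $\mathrm{dist}(\Omega',\partial\Omega)$. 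Running the De~Giorgi/Gronwall iteration of Proposition~\ref{lemuniflipbd} on the distribution function of $u_{k,x_1}$ over $\Omega'$ then yields $\|\nabla u_k\|_{L^\infty(\Omega')}\le C(\Omega')/\mu$, uniformly in $k$.

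\emph{Step 3 (passage to the limit and conclusion).} As in Step~4 of Section~\ref{sec:4}, a subsequence of $u_k$ converges locally uniformly on $\Omega$ to a solution $v$ of \eqref{minimisation} that is Lipschitz on every $\Omega'\Subset\Omega$, hence locally Lipschitz on $\Omega$. Undoing the truncation of Step~1 (for each set of an exhaustion $\Omega'_j\uparrow\Omega$ pick $Q_j$ large enough that $C(\Omega'_j)/\mu_{Q_j}\le Q_j-1$, which is possible by \eqref{superlinear}) transfers the estimate to the original $F$. Finally Lemma~\ref{propagationofregularity} propagates local Lipschitz regularity from $v$ to every solution of \eqref{minimisation}, completing the proof.

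\emph{Main obstacle.} The delicate point is Step~2: making the localized translation competitor admissible near $\partial\Omega$ while keeping $\alpha$ of the correct (Lipschitz) order forces the simultaneous use of the cut-off on the ``$\partial\Omega$ side'' and of the sharp lower barrier on the ``$\partial\Omega-\tau$ side''; and, once the cut-off is present, one must control the extra terms $\eta\,|\nabla\eta|\,(\cdots)$ it introduces in the Caccioppoli inequality so that the iteration still closes despite the total absence of growth bounds on $F$ at infinity. Here the uniform $L^\infty$ bound on $u_k$ and the finiteness of $\int F(\nabla u_k)$ coming from minimality are exactly what make the absorption possible.
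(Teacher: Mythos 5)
Your proposal diverges from the paper's route, which is stated (without details) to be Clarke's \emph{dilation} technique from \cite{Cl}: one compares $u$ with maps of the form $u(\bar x+\lambda(\cdot-\bar x))$ ($\lambda$ close to $1$, $\bar x\in\Omega$), using convexity of $\Omega$ so that the shrunken domain nests inside $\Omega$ and the lower barrier alone furnishes admissibility. No cut-off is introduced, so the double-comparison cancellation of \eqref{towards} is preserved verbatim. Your proposal instead keeps the translation and introduces a cut-off $\eta$ to dispense with the upper barrier. This is a genuinely different route, and unfortunately it has a gap precisely where you flag the ``main obstacle.''

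Concretely, replace the paper's competitors $V,W$ by $\widetilde V=u_k+\eta\,w$, $\widetilde W=u_{k,\tau,\alpha}-\eta\,w$ with $w=(u_{k,\tau,\alpha}-u_k)_+$. Writing $a=\nabla u_k$, $b=\nabla u_{k,\tau,\alpha}$ on $A_\tau(\alpha)$, one has $\theta\nabla\widetilde V+(1-\theta)a=a+\theta c$ and $\theta\nabla\widetilde W+(1-\theta)b=b-\theta c$ with $c=\eta(b-a)+w\nabla\eta$. These are \emph{not} on the segment $[a,b]$, so the integrand in the left-hand side of \eqref{towards} becomes $F_k(a)+F_k(b)-F_k(a+\theta c)-F_k(b-\theta c)$, which has no fixed sign. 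Dividing by $\theta$ and sending $\theta\to0$ (and then $h\to0$) leaves, alongside the good Caccioppoli term $\int\eta\,\langle D^2F_k(\nabla u_k)\nabla u_{k,x_1},\nabla u_{k,x_1}\rangle$, the extra term
\[
\int (u_{k,x_1}-\alpha)_+\,\big\langle D^2F_k(\nabla u_k)\,\nabla u_{k,x_1},\nabla\eta\big\rangle\,dx.
\]
By Cauchy--Schwarz in the $D^2F_k(\nabla u_k)$ inner product, half of it is absorbed, but the remainder requires a bound on
\[
\int (u_{k,x_1}-\alpha)_+^2\,\big\langle D^2F_k(\nabla u_k)\,\nabla\eta,\nabla\eta\big\rangle\,dx\;\lesssim\;\int (u_{k,x_1}-\alpha)_+^2\,\|D^2F_k(\nabla u_k)\|\,|\nabla\eta|^2\,dx,
\]
and $\|D^2F_k(\nabla u_k)\|$ is completely uncontrolled without an upper growth assumption on $F$ (which the paper deliberately drops). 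Neither the uniform $L^\infty$ bound on $u_k$ nor the uniform bound $\int F_k(\nabla u_k)\le C$ coming from minimality gives control on $D^2F_k(\nabla u_k)$ or on $\nabla F_k(\nabla u_k)$: think of $F(z)=|z|^{100}$, for which $\int F(\nabla u)<\infty$ does not bound $\int|\nabla u|^{98}|\nabla u_{x_1}|^2|\nabla\eta|^2$. This is precisely the term that forces the $p$- and $(p,q)$-growth hypotheses in the Caccioppoli-based literature (\cite{FFM,CGM}) that the paper is trying to circumvent.

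In short, Step~2 of your proposal reintroduces the obstruction that the global double-comparison trick was designed to avoid. The dilation method of Clarke is the device the paper uses to get a boundary-free, cut-off-free comparison in the one-sided setting; it replaces the translation, not the global structure, and so the cancellation in \eqref{towards} survives and no $\nabla\eta$ term ever appears. Your Steps~1 and~3 are sound (the ``flat'' supersolution $\psi_y^+$ gives a uniform $L^\infty$ bound exactly as you say, and the limit/propagation argument carries over), but the heart of Step~2 needs to be reworked along the dilation line rather than the cut-off line.
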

The proof follows the lines of the proof of the Main Theorem except that the translation technique in Step 2 of the proof of Theorem \ref{teo:minimisationk} must be replaced by the dilation technique introduced in \cite{Cl}. We omit the details.

\appendix

\section{Uniformly convex functions outside a ball}
\label{section_approximation_convex_functions}

\subsection{Basic properties}
\label{ucfoab}

We first present a characterization of uniformly convex functions outside a ball in terms  of second order derivatives.

\begin{lm}\label{lmunifconvsndder}
Let \(F:\mathbb{R}^N\to \mathbb{R}\) be a convex function and \(\{\rho_{\varepsilon}\}_{\varepsilon>0}\subset C^{\infty}_0(B_{\varepsilon})\) be a sequence of standard mollifiers. 
\begin{itemize}
\item[$(i)$] Assume that \(F\) is  \(\Phi-\)uniformly convex outside some ball \(B_R\). Then for every \(\varepsilon>0\), for every \(R'>R+\varepsilon\), for every \(\xi\in B_{R'}\setminus B_{R+\varepsilon}\) and \(\eta\in \mathbb{R}^N\),
\begin{equation}\label{eq461}
\langle \nabla^2(F*\rho_{\varepsilon})(\xi)\,\eta, \eta\rangle\geq \left(\min_{t\in [2\,R,2\,(R'+\varepsilon)]}\Phi(t)\right)\, |\eta|^2.
\end{equation}
\item[$(ii)$] Assume that there exist \(\mu>0\) and \(R>0\) such that for every \(\varepsilon>0\), for every \(|x|\geq R+\varepsilon\), for every \(\eta\in \mathbb{R}^N\),
\begin{equation}\label{eq465}
\langle \nabla^2(F*\rho_{\varepsilon})(x)\,\eta, \eta\rangle\geq \mu\, |\eta|^2.
\end{equation}
Then \(F\) is \(\mu-\)uniformly convex outside \(B_R\).
\end{itemize}
\end{lm}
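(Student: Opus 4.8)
The plan is to treat the two statements separately, in both cases going through the smooth mollifications $F\ast\rho_\varepsilon$, which are well defined and $C^\infty$ since a finite convex function on $\mathbb{R}^N$ is continuous and locally bounded; I will only use the values of $F\ast\rho_\varepsilon$ near the points of interest together with the elementary fact that the second symmetric difference quotient of a $C^2$ function converges to the corresponding Hessian quadratic form.

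\emph{Part $(i)$.} Fix $\varepsilon>0$, $R'>R+\varepsilon$, $\xi\in B_{R'}\setminus B_{R+\varepsilon}$ and $\eta\in\mathbb{R}^N$ (the case $\eta=0$ being trivial). The starting point is the identity, valid by linearity of the integral defining the convolution,
\[
\frac{(F\ast\rho_\varepsilon)(\xi+s\eta)+(F\ast\rho_\varepsilon)(\xi-s\eta)-2(F\ast\rho_\varepsilon)(\xi)}{s^2}=\int_{B_\varepsilon}D_s(z)\,\rho_\varepsilon(z)\,dz,
\]
where $D_s(z):=s^{-2}\big(F(\xi+s\eta-z)+F(\xi-s\eta-z)-2F(\xi-z)\big)$; the left-hand side tends to $\langle\nabla^2(F\ast\rho_\varepsilon)(\xi)\eta,\eta\rangle$ as $s\to0$ because $F\ast\rho_\varepsilon\in C^2$. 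Two facts about $D_s$ will be used: first, by convexity of $F$ applied at the midpoint, $D_s(z)\ge0$ for every $s$ and every $z\in B_\varepsilon$; second, for each fixed $z\in B_\varepsilon$ one has $|\xi-z|\ge|\xi|-|z|>R$, so for $|s|$ small enough (depending on $z$) the segment joining $\xi-z+s\eta$ and $\xi-z-s\eta$ avoids $B_R$, and \eqref{equc} with $\theta=1/2$ gives $D_s(z)\ge\Phi\big(|\xi-z+s\eta|+|\xi-z-s\eta|\big)\,|\eta|^2$. Letting $s\to0$ along a sequence and using the continuity of $\Phi$ yields $\liminf_s D_s(z)\ge\Phi(2|\xi-z|)\,|\eta|^2$ for every $z\in B_\varepsilon$, and since $2R<2|\xi-z|<2(R'+\varepsilon)$ this is bounded below by $\big(\min_{[2R,\,2(R'+\varepsilon)]}\Phi\big)\,|\eta|^2$. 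Applying Fatou's lemma to the displayed integral — legitimate precisely because $D_s\ge0$ — then gives \eqref{eq461}. I expect this to be the delicate point: the pointwise bound on $D_s(z)$ holds only for $|s|$ small \emph{depending on $z$}, so one cannot simply integrate the inequality over a fixed range of $s$; Fatou, combined with the sign of second differences of convex functions, is what makes the argument work with no uniformity in $z$.

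\emph{Part $(ii)$.} Fix $z,z'\in\mathbb{R}^N$ with $[z,z']\cap B_R=\emptyset$ and $\theta\in[0,1]$. Every point of $[z,z']$ has norm $\ge R$, hence for any $\lambda>1$ every point of the dilated segment $\lambda[z,z']$ has norm $\ge\lambda R$; consequently, for every $\varepsilon$ with $0<\varepsilon<(\lambda-1)R$ the segment $\lambda[z,z']$ lies in the open set $\{|x|>R+\varepsilon\}$, on which the hypothesis \eqref{eq465} gives $\nabla^2(F\ast\rho_\varepsilon)\ge\mu\,\mathrm{Id}$. The elementary one-variable computation (the map $t\mapsto(F\ast\rho_\varepsilon)\big(\lambda z+t(\lambda z'-\lambda z)\big)-\tfrac{\mu}{2}\,t^2\,|\lambda z-\lambda z'|^2$ has nonnegative second derivative, hence is convex on $[0,1]$) then yields
\[
(F\ast\rho_\varepsilon)\big(\theta\,\lambda z+(1-\theta)\,\lambda z'\big)\le\theta\,(F\ast\rho_\varepsilon)(\lambda z)+(1-\theta)\,(F\ast\rho_\varepsilon)(\lambda z')-\frac{\mu}{2}\,\theta\,(1-\theta)\,\lambda^2\,|z-z'|^2.
\]
Letting $\varepsilon\to0^+$, so that $F\ast\rho_\varepsilon\to F$ locally uniformly, and then $\lambda\to1^+$, using the continuity of $F$, produces \eqref{equc} for $F$, i.e. the $\mu$-uniform convexity of $F$ outside $B_R$. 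The point of the dilation is to bypass the only obstruction here: \eqref{eq465} controls $\nabla^2(F\ast\rho_\varepsilon)$ only where $|x|\ge R+\varepsilon$, whereas the given segment may be tangent to $\partial B_R$; replacing $[z,z']$ by $\lambda[z,z']$ with $\lambda>1$ pushes it strictly away from $\overline{B_R}$ before mollifying, and the small loss is recovered in the limit.
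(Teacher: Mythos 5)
Your proof is correct and follows the same overall strategy as the paper: pass to the mollifications $F*\rho_\varepsilon$, extract second-order information on the set where the Hessian is controlled, and take limits. The execution is slightly cleaner in two spots. In part $(i)$, the paper first writes the pointwise uniform-convexity inequality for $F$ at $\xi-y$ and then multiplies by $\rho_\varepsilon(y)$ and integrates \emph{before} letting $h\to 0$; this tacitly requires a single $h$ that works for all $y$ in the support of $\rho_\varepsilon$, which holds because that support is a compact subset of $B_\varepsilon$, but is not spelled out. Your argument reverses the order — express the second difference quotient of $F*\rho_\varepsilon$ as $\int D_s(z)\,\rho_\varepsilon(z)\,dz$ and apply Fatou, using $D_s\ge 0$ — and thereby removes any need for uniformity in $z$. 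In part $(ii)$, the paper introduces approximating endpoints $\xi_\varepsilon,\xi'_\varepsilon$ with $[\xi_\varepsilon,\xi'_\varepsilon]\cap B_{R+\varepsilon}=\emptyset$ without constructing them; your dilation $\lambda[z,z']$ with $\lambda>1$ and $\varepsilon<(\lambda-1)R$ is an explicit such choice, and your observation that $t\mapsto (F*\rho_\varepsilon)(\lambda z+t\lambda(z'-z))-\tfrac{\mu}{2}t^2\lambda^2|z-z'|^2$ is convex is equivalent to the paper's Taylor expansion with integral second-order remainder. Both refinements are sound and arguably tighten the original exposition.
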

\begin{proof}
Assume first that \(F\) is \(\Phi-\)uniformly convex outside \(B_R\). For every \(\xi\in B_{R'}\setminus B_{R+\varepsilon}\), for every \(y\in B_{\varepsilon}, \eta\in \mathbb{R}^N\) and every \(h>0\) sufficiently small, the segment \([\xi+h\,\eta-y, \xi-h\,\eta-y]\) does not intersect \(B_R\).
Hence
\[
\begin{split}
F(\xi-y)&\leq \frac{1}{2}F(\xi+h\,\eta-y) + \frac{1}{2}F(\xi-h\,\eta-y)-\frac{1}{2}\,h^2\,|\eta|^2\,\Phi(|\xi+h\,\eta-y|+|\xi-h\,\eta-y|)\\
&\leq \frac{1}{2}F(\xi+h\,\eta-y) + \frac{1}{2}F(\xi-h\,\eta-y)-\frac{1}{2}\, h^2\,|\eta|^2 \left(\min_{t\in [2\,R, 2\,(R'+\varepsilon+ h\,|\eta|)]}\Phi(t)\right).
\end{split}
\]
By multiplying by $\rho_\varepsilon(y)$ and integrating, this gives
\[
F*\rho_{\varepsilon}(\xi)\leq \frac{1}{2}F*\rho_{\varepsilon}(\xi+h\,\eta) + \frac{1}{2}F*\rho_{\varepsilon}(\xi-h\,\eta) -\frac{1}{2}\,h^2\,|\eta|^2\,\left(\min_{t\in [2\,R, 2\,(R'+\varepsilon+ h\,|\eta|)]}\Phi(t)\right).
\]
Hence, we get
\[
\begin{split}
\langle \nabla^2(F*\rho_{\varepsilon})(\xi)\,\eta, \eta\rangle &= \lim_{h\to 0}\frac{F*\rho_{\varepsilon}(\xi+h\,\eta) + F*\rho_{\varepsilon}(\xi-h\,\eta)-2\,F*\rho_{\varepsilon}(\xi)}{h^2}\\
&\geq \left(\min_{t\in [2\,R, 2\,(R'+\varepsilon)]}\Phi(t)\right)\,|\eta|^2.
\end{split}
\]
This completes the first part of the statement. 
\vskip.2cm\noindent 
Assume now that \eqref{eq465} holds true. Let \(\theta\in [0,1]\)  and \(\xi, \xi'\in \mathbb{R}^N\) be such that \([\xi,\xi']\cap B_{R} = \emptyset\). For every $\varepsilon>0$, we take $\xi_\varepsilon,\xi_\varepsilon'\in\mathbb{R}^N$ such that \([\xi_\varepsilon,\xi'_\varepsilon]\cap B_{R+\varepsilon} = \emptyset\) and 
\[
\lim_{\varepsilon\to 0} |\xi_\varepsilon-\xi|=0\qquad\mbox{ and }\qquad \lim_{\varepsilon\to 0} |\xi'_\varepsilon-\xi'|=0.
\]
We have
\[
\begin{split}
F*\rho_{\varepsilon}(\xi_\varepsilon)&=F*\rho_{\varepsilon}(\theta\,\xi_\varepsilon+(1-\theta)\,\xi'_\varepsilon)+(1-\theta)\,\langle \nabla (F*\rho_{\varepsilon})(\theta\,\xi_\varepsilon+(1-\theta)\,\xi'_\varepsilon), \xi_\varepsilon-\xi'_\varepsilon\rangle\\
& \quad +(1-\theta)^2\,\int_{0}^1(1-t)\,\langle D^2 (F*\rho_{\varepsilon})\big(t\,\xi_\varepsilon+(1-t)\,(\theta\,\xi_\varepsilon +(1-\theta)\,\xi'_\varepsilon)\big)(\xi_\varepsilon-\xi'_\varepsilon),\xi_\varepsilon-\xi'_\varepsilon\rangle\,dt.
\end{split}
\]
Since the segment \([\xi_\varepsilon, \theta\,\xi_\varepsilon+(1-\theta)\,\xi'_\varepsilon]\) does not intersect \(B_{R+\varepsilon}\), assumption \eqref{eq465} implies
\[
\begin{split}
F*\rho_{\varepsilon}(\xi_\varepsilon)\geq F*\rho_{\varepsilon}(\theta\,\xi_\varepsilon+(1-\theta)\,\xi'_\varepsilon)&+(1-\theta)\,\langle \nabla (F*\rho_{\varepsilon})(\theta\,\xi_\varepsilon+(1-\theta)\,\xi'_\varepsilon), \xi_\varepsilon-\xi'_\varepsilon\rangle\\
& + \frac{\mu}{2}\, (1-\theta)^2\,|\xi_\varepsilon-\xi'_\varepsilon|^2.
\end{split}
\]
Similarly, we get
\[
\begin{split}
F*\rho_{\varepsilon}(\xi_\varepsilon')\geq F*\rho_{\varepsilon}(\theta\,\xi_\varepsilon+(1-\theta)\,\xi'_\varepsilon)&+\theta\,\langle \nabla (F*\rho_{\varepsilon})(\theta\,\xi_\varepsilon+(1-\theta)\,\xi'_\varepsilon), \xi'_\varepsilon-\xi_\varepsilon\rangle\\
& + \frac{\mu}{2}\, \theta^2\,|\xi_\varepsilon-\xi'_\varepsilon|^2.
\end{split}
\]
We multiply the first inequality by \(\theta\) and the second one by \((1-\theta)\). By summing them, we get
\[
\theta\, F*\rho_{\varepsilon}(\xi_\varepsilon) + (1-\theta)\,F*\rho_{\varepsilon}(\xi'_\varepsilon) \geq  F*\rho_{\varepsilon}(\theta\,\xi_\varepsilon+(1-\theta)\,\xi'_\varepsilon) +\frac{\mu}{2}\,\theta\,(1-\theta)\,|\xi_\varepsilon- \xi'_\varepsilon|^2.
\]
We then let \(\varepsilon\) go to \(0\) to obtain \eqref{equc}. This completes the proof.
\end{proof}

We will also need the following technical result. 
Here $\mathcal{H}^1$ denotes the $1-$dimensional Hausdorff measure.
\begin{lm}
\label{lemunifconvh1}
Let \(F\) be a convex function which is   $\mu-$uniformly convex outside a ball \(B_R=B_R(0)\subset \mathbb{R}^N\). For every \(\xi, \xi' \in \mathbb{R}^N\) and every \(\zeta\in \partial F(\xi)\), we have
\begin{equation}
\label{eq1739}
F(\xi')\geq F(\xi) + \langle \zeta, \xi'-\xi \rangle + \frac{\mu}{4}\, \mathcal{H}^1\left([\xi, \xi']\setminus B_R\right)^2.
\end{equation}
\end{lm}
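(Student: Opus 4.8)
The plan is to reduce everything to a one-dimensional statement and then split according to how the segment $[\xi,\xi']$ meets $B_R$. Set $\xi_t = (1-t)\,\xi + t\,\xi'$ and $\phi(t) = F(\xi_t)$ for $t\in\mathbb{R}$; this is a finite convex function, hence locally absolutely continuous with nondecreasing right derivative $\phi'_+$, and $[0,1]$ lies in the interior of its domain. Writing $m = \langle\zeta,\xi'-\xi\rangle$, the subgradient inequality for $\zeta\in\partial F(\xi)$ reads $\phi(t)\ge\phi(0)+t\,m$ for all $t$, which forces $\phi'_+(0)\ge m$ and hence, by monotonicity of slopes, $\phi'_+(s)\ge m$ for all $s\in[0,1)$. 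If $[\xi,\xi']\cap B_R=\emptyset$ there is nothing to do: \eqref{equc_bis} applied to $\xi,\xi'$ with $\zeta$ gives $\phi(1)\ge\phi(0)+m+\tfrac{\mu}{2}\,|\xi-\xi'|^2$, while $\mathcal{H}^1([\xi,\xi']\setminus B_R)=|\xi-\xi'|$, so \eqref{eq1739} holds with the even better constant $\mu/2$.

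Next I would handle the case $[\xi,\xi']\cap B_R\ne\emptyset$ (if $\xi=\xi'$ the claim is the plain subgradient inequality, so assume $\xi\ne\xi'$). Since $B_R$ is open and convex and $t\mapsto\xi_t$ is affine, the set $S=\{t\in[0,1]:\xi_t\in B_R\}$ is a nonempty interval; let $t_1=\inf S$ and $t_2=\sup S$, so $0\le t_1<t_2\le 1$. If $t_1>0$ then $\xi_{t_1}\in\partial B_R$ and the sub-segment $[\xi,\xi_{t_1}]$ does not meet $B_R$, and similarly for $[\xi_{t_2},\xi']$ if $t_2<1$; moreover, since $S$ and $(t_1,t_2)$ differ by at most two points and $t\mapsto\xi_t$ is injective, $\mathcal{H}^1([\xi,\xi']\setminus B_R)=\ell_1+\ell_2$ where $\ell_1=t_1\,|\xi-\xi'|$ and $\ell_2=(1-t_2)\,|\xi-\xi'|$. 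Now I chain three estimates (with the understanding that an estimate involving a collapsed sub-segment, i.e. $t_1=0$ or $t_2=1$, is a triviality requiring no appeal to \eqref{equc_bis}): first, \eqref{equc_bis} on $[\xi,\xi_{t_1}]$ with the subgradient $\zeta$ gives $\phi(t_1)\ge\phi(0)+t_1\,m+\tfrac{\mu}{2}\,\ell_1^2$; second, choosing $\eta\in\partial F(\xi_{t_2})$ and writing $p=\langle\eta,\xi'-\xi\rangle$, which is a subgradient of $\phi$ at $t_2$, inequality \eqref{equc_bis} on $[\xi_{t_2},\xi']$ gives $\phi(1)\ge\phi(t_2)+(1-t_2)\,p+\tfrac{\mu}{2}\,\ell_2^2$, where $p\ge\phi'_-(t_2)\ge\phi'_+(0)\ge m$ because $t_2>0$ and the slopes of $\phi$ are monotone; third, $\phi(t_2)-\phi(t_1)=\int_{t_1}^{t_2}\phi'_+(s)\,ds\ge(t_2-t_1)\,m$. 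Combining these and telescoping the $m$-terms yields $\phi(1)\ge\phi(0)+m+\tfrac{\mu}{2}(\ell_1^2+\ell_2^2)$, and the elementary bound $\ell_1^2+\ell_2^2\ge\tfrac{1}{2}(\ell_1+\ell_2)^2$ then gives exactly \eqref{eq1739}.

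The computations are all elementary, so the only points needing care are the bookkeeping of the degenerate configurations ($\xi$ or $\xi'$ in $\overline{B_R}$, endpoints on $\partial B_R$, a collapsing sub-segment or a vanishing $\ell_i$), the identification of $\mathcal{H}^1([\xi,\xi']\setminus B_R)$ with $\ell_1+\ell_2$, and the two uses of monotonicity of the slopes of $\phi$ — namely $\phi'_+(s)\ge m$ and $p\ge m$ — since this is precisely what transports the information carried by $\zeta\in\partial F(\xi)$ along the segment to the two outer pieces where the uniform convexity outside $B_R$ is available. The factor $2$ lost with respect to the non-intersecting case is genuine and comes only from the last inequality $\ell_1^2+\ell_2^2\ge\tfrac{1}{2}(\ell_1+\ell_2)^2$.
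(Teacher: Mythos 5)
Your proof is correct and follows essentially the same route as the paper's: split $[\xi,\xi']$ at the entry/exit points of $B_R$, apply \eqref{equc_bis} on the outer pieces, use monotonicity of the subdifferential (equivalently, of the slopes of the one-dimensional restriction $\phi$) to transport the subgradient information across the middle piece, and close with $\ell_1^2+\ell_2^2\ge\tfrac12(\ell_1+\ell_2)^2$. Your one-dimensional reformulation via $\phi(t)=F(\xi_t)$ merely streamlines the paper's Case~C sub-case distinction into a single chain, with the same degenerate cases handled on the side.
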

\begin{proof} We can assume that \(\xi\not=\xi'\).
We have several possible cases:
\vskip.2cm
\begin{itemize}
\item[{\it Case A.}] $\mathcal{H}^1([\xi,\xi']\setminus B_R)=\mathcal{H}^1([\xi,\xi'])$. In this case, the segment \([\xi,\xi']\) does not intersect \(B_R\) and thus \eqref{eq1838} follows directly from \eqref{equc_bis}.
\vskip.2cm
\item[{\it Case B.}] $\mathcal{H}^1([\xi,\xi']\setminus B_R)=0$. Then \eqref{eq1838}  follows from the convexity of $F$.
\vskip.2cm
\item[{\it Case C.}] $0<\mathcal{H}^1([\xi,\xi']\setminus B_R)<\mathcal{H}^1([\xi,\xi'])$. Without loss of generality, we may assume that \(\xi\not \in \overline{B_R}\) and that the half-line \(\{\xi+t(\xi'-\xi) ; t\geq 0\}\) intersects the sphere \(\partial B_R\) at two points \(\xi_1, \xi_2\) such that \(\xi_1\in [\xi_2, \xi]\).
We now in turn have to consider two cases: 
\[
\xi' \in [\xi_1, \xi_2]\qquad \mbox{ or }\qquad \xi_2\in [\xi', \xi_1],
\]
(see Figure \ref{fig:1} below).
\begin{figure}[h]
\includegraphics[scale=.3]{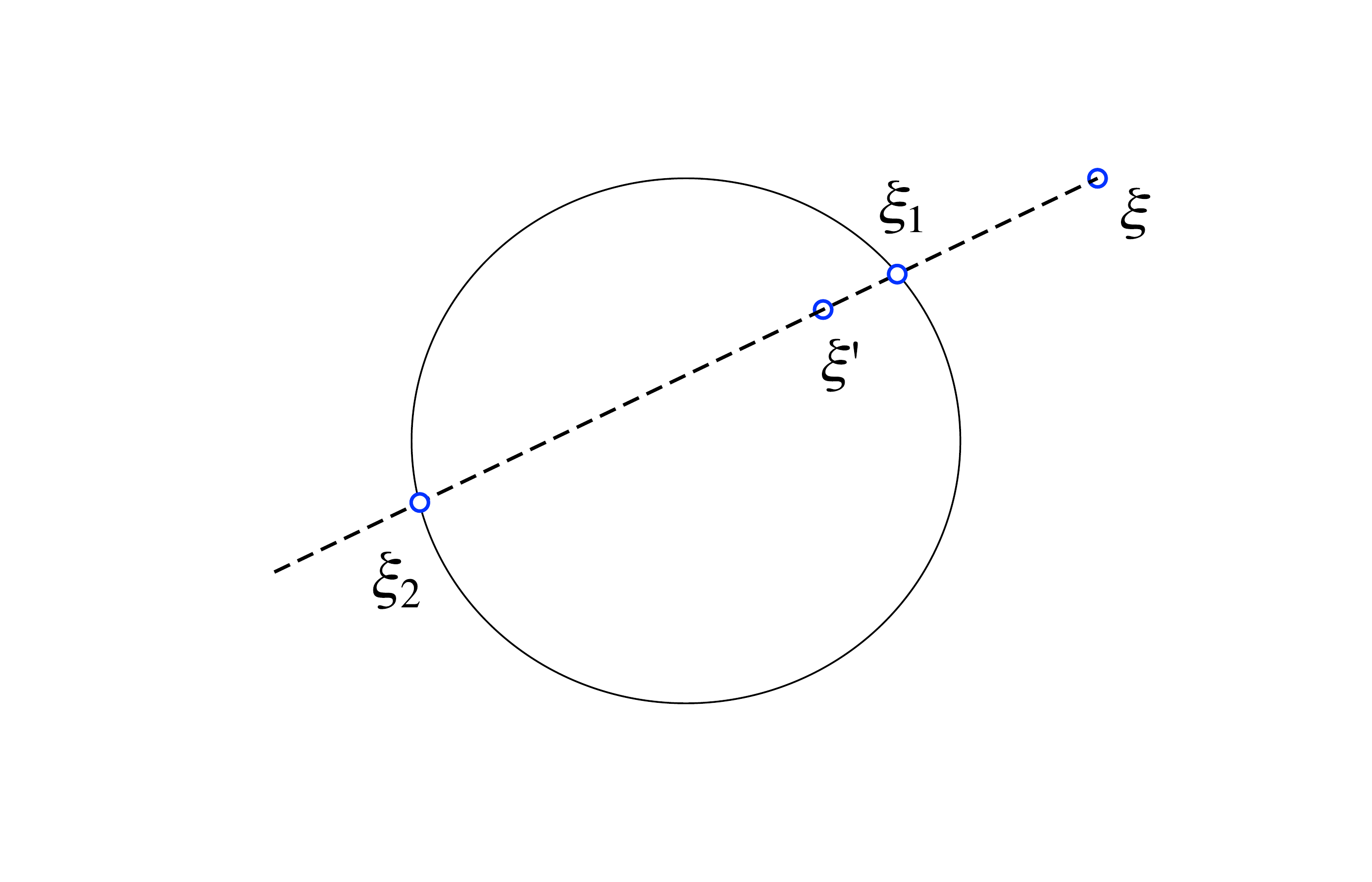}\includegraphics[scale=.3]{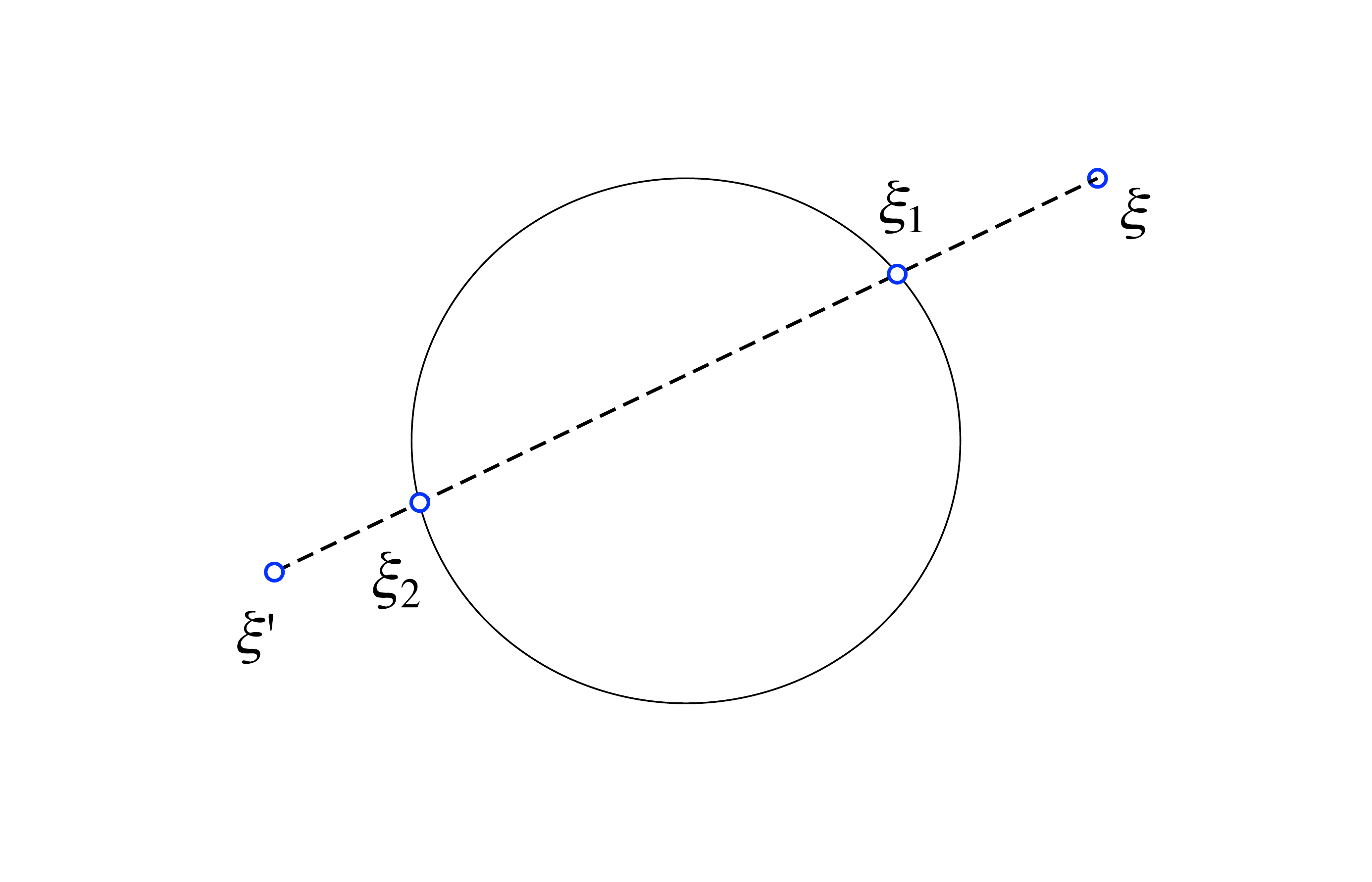}
\caption{The two possibilities for {\it Case C} in the proof of Lemma \ref{lemunifconvh1}.}
\label{fig:1}
\end{figure}
\vskip.2cm
When \(\xi' \in [\xi_1, \xi_2]\), we use the fact that the segment \([\xi_1, \xi]\) lies outside the ball \(B_R\). It follows from \eqref{equc_bis} that for every \(\zeta_1\in \partial F(\xi_1)\),
\begin{equation}\label{eq1856}
F(\xi)\geq F(\xi_1)+\langle\zeta_1, \xi-\xi_1\rangle +\frac{\mu}{2}\,|\xi-\xi_1|^2.
\end{equation}
Let \(\zeta'\in \partial F(\xi')\). By convexity of \(F\), we also have
\begin{equation}\label{eq1860}
F(\xi_1)\geq F(\xi')+\langle\zeta', \xi_1-\xi'\rangle,
\end{equation}
and
\[
\langle \zeta_1-\zeta', \xi_1-\xi'\rangle \geq 0,
\]
since the subdifferential of $F$ is a monotone map.
The latter inequality implies that
\[
\langle \zeta_1-\zeta', \xi-\xi_1\rangle \geq 0.
\]
Together with \eqref{eq1856} and \eqref{eq1860}, this yields
\[
\begin{split}
F(\xi) &\geq F(\xi') +\langle \zeta', \xi-\xi'\rangle +\frac{\mu}{2}\,|\xi-\xi_1|^2\\
&=F(\xi')+\langle \zeta', \xi-\xi' \rangle +\frac{\mu}{2}\,\mathcal{H}^1([\xi,\xi']\setminus B_R)^2,
\end{split}
\]
which settles the first case.
\par
In the second case, in addition to \eqref{eq1856}, we also use the fact that for every \(\zeta_2 \in \partial F(\xi_2)\),
\[
F(\xi_1)\geq F(\xi_2) +\langle \zeta_2, \xi_1-\xi_2\rangle,
\]
and for every \(\zeta'\in \partial F(\xi')\)
\[
F(\xi_2) \geq F(\xi') + \langle \zeta', \xi_2- \xi'\rangle +\frac{\mu}{2}\,|\xi_2-\xi'|^2,
\]
again by \eqref{equc_bis}.
By using as in the first case that \(\langle \zeta_1-\zeta', \xi-\xi_1\rangle \geq 0\) and  \(\langle \zeta_2-\zeta', \xi_1-\xi_2\rangle \geq 0\), we thus obtain
\[
\begin{split}
F(\xi) &\geq F(\xi') +\langle \zeta', \xi-\xi'\rangle +\frac{\mu}{2}\,(|\xi-\xi_1|^2+|\xi'-\xi_2|^2)\\
&\geq  F(\xi') +\langle \zeta', \xi-\xi'\rangle +\frac{\mu}{4}\,\mathcal{H}^1\left([\xi, \xi']\setminus B_R\right)^2.
\end{split}
\]
\end{itemize}
This completes the proof of \eqref{eq1739}.
\end{proof}
Thanks to the previous result, we can detail some consequences of the uniform convexity that we used in the proof of the Main Theorem.
\begin{lm}
\label{lemma_uc}
Let \(F\) be a convex function which is  $\mu-$uniformly convex outside a ball \(B_R=B_R(0)\subset \mathbb{R}^N\). Then we have:
\begin{enumerate}
\item[{\it i)}] for every \(\xi, \xi'\in  \mathbb{R}^N\),  and every \(\zeta\in \partial F(\xi)\), if \(|\xi|\geq 2R\) or \(|\xi'|\geq 2R\), then there holds
\begin{equation}\label{eq1838}
F(\xi')\geq F(\xi) +\langle \zeta, \xi'-\xi\rangle +\frac{\mu}{36}\,|\xi'-\xi|^2;
\end{equation}
\item[{\it ii)}] for every \(\xi, \xi'\in  \mathbb{R}^N\), and every \(\zeta\in \partial F(\xi), \zeta'\in\partial F(\xi')\), if \(|\xi|\geq 2R\) or \(|\xi'|\geq 2R\) we have
\begin{equation}
\label{eq1838bis}
\langle \zeta-\zeta', \xi-\xi'\rangle \geq \frac{\mu}{18}\,|\xi-\xi'|^2;
\end{equation}
\item[{\it iii)}] for every \(\xi, \xi'\in \mathbb{R}^N\setminus B_{2\,R}\) and for every \(\theta \in [0,1]\),
\begin{equation}\label{eq1834}
F(\theta\, \xi + (1-\theta)\,\xi')\leq \theta\, F(\xi) + (1-\theta)\,F(\xi') -\frac{\mu}{36}\,\theta\,(1-\theta)\, |\xi-\xi'|^2;
\end{equation}
\item[{\it iv)}] for every \(\xi\in \mathbb{R}^N\),
\begin{equation}
\label{eq1830}
F(\xi)\geq \frac{\mu}{72}\,|\xi|^2 -\left(|F(0)|+\frac{18}{\mu}\,|\partial^o F(0)|^2\right),
\end{equation}
where $\partial^o F(0)$ denotes the element of $\partial F(0)$ having minimal Euclidean norm. 
\end{enumerate}
\end{lm}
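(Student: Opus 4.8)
\medskip

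The engine for all four items is Lemma~\ref{lemunifconvh1}, which already furnishes, for \emph{every} pair $\xi,\xi'\in\mathbb{R}^N$ and every $\zeta\in\partial F(\xi)$, the estimate
\[
F(\xi')\ge F(\xi)+\langle\zeta,\xi'-\xi\rangle+\frac{\mu}{4}\,\mathcal{H}^1\big([\xi,\xi']\setminus B_R\big)^2 .
\]
The only real task is to replace the length $\mathcal{H}^1([\xi,\xi']\setminus B_R)$ by a clean quantity under the extra information that one endpoint lies outside $B_{2R}$. The plan is therefore: \textbf{(a)} prove the elementary geometric bound
\[
|\xi|\ge 2R\ \text{ or }\ |\xi'|\ge 2R\quad\Longrightarrow\quad \mathcal{H}^1\big([\xi,\xi']\setminus B_R\big)\ge \tfrac13\,|\xi-\xi'|;
\]
inserting this into the inequality above, with $\tfrac14\cdot\tfrac19=\tfrac1{36}$, yields {\it i)}. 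Then \textbf{(b)} deduce {\it ii)}, {\it iii)}, {\it iv)} from {\it i)} by purely algebraic manipulations.

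For \textbf{(a)}: the line through $\xi,\xi'$ meets the ball $B_R$ in a chord of length at most $2R$, so $\mathcal{H}^1([\xi,\xi']\setminus B_R)\ge |\xi-\xi'|-2R$, which is $\ge\tfrac13|\xi-\xi'|$ as soon as $|\xi-\xi'|\ge 3R$. If instead $|\xi-\xi'|<3R$ and, say, $|\xi|\ge 2R$, then $\mathrm{dist}(\xi,B_R)=|\xi|-R\ge R$; moving along $[\xi,\xi']$ from $\xi$, a point at arclength $t$ has norm $\ge |\xi|-t$, hence stays outside $B_R$ at least for $t\le |\xi|-R$, so the portion of the segment outside $B_R$ has length $\ge\min(|\xi-\xi'|,|\xi|-R)\ge\min(|\xi-\xi'|,R)\ge\tfrac13|\xi-\xi'|$ (the last step because $|\xi-\xi'|<3R$). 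The case $|\xi'|\ge 2R$ is symmetric. Feeding this into Lemma~\ref{lemunifconvh1} gives \eqref{eq1838}.

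Items {\it ii)} and {\it iii)} follow by standard manipulations. For {\it ii)}, write \eqref{eq1838} for $(\xi,\xi',\zeta)$ and for $(\xi',\xi,\zeta')$ — the hypothesis is symmetric in $\xi,\xi'$ — and sum: the values of $F$ cancel and one is left with $\langle\zeta-\zeta',\xi-\xi'\rangle\ge\tfrac{2\mu}{36}|\xi-\xi'|^2=\tfrac{\mu}{18}|\xi-\xi'|^2$, i.e. \eqref{eq1838bis}. For {\it iii)}, put $\xi_\theta=\theta\,\xi+(1-\theta)\,\xi'$ and fix $\zeta\in\partial F(\xi_\theta)$; since $|\xi|,|\xi'|\ge 2R$, inequality \eqref{eq1838} applies to the pairs $(\xi_\theta,\xi)$ and $(\xi_\theta,\xi')$, and using $\xi-\xi_\theta=(1-\theta)(\xi-\xi')$, $\xi'-\xi_\theta=-\theta(\xi-\xi')$ one multiplies these two inequalities by $\theta$ and $1-\theta$ respectively and adds. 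The linear terms combine into $\langle\zeta,\theta(\xi-\xi_\theta)+(1-\theta)(\xi'-\xi_\theta)\rangle=0$, while $\theta(1-\theta)^2+(1-\theta)\theta^2=\theta(1-\theta)$, which gives exactly \eqref{eq1834}.

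Finally, for {\it iv)} the plan is to apply {\it i)} with the origin as one of the two points. If $|\xi|\ge 2R$, then \eqref{eq1838} with base point $0$, subgradient $\partial^o F(0)\in\partial F(0)$ and target point $\xi$ reads $F(\xi)\ge F(0)+\langle\partial^o F(0),\xi\rangle+\tfrac{\mu}{36}|\xi|^2$; splitting $\tfrac{\mu}{36}=\tfrac{\mu}{72}+\tfrac{\mu}{72}$ and using the elementary inequality $\langle\partial^o F(0),\xi\rangle+\tfrac{\mu}{72}|\xi|^2\ge -\tfrac{18}{\mu}|\partial^o F(0)|^2$, together with $F(0)\ge-|F(0)|$, gives \eqref{eq1830}. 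If $|\xi|<2R$, then $\tfrac{\mu}{72}|\xi|^2\le\tfrac{\mu}{72}(2R)^2=\tfrac{\mu R^2}{18}$, and one concludes from the plain convexity inequality $F(\xi)\ge F(0)+\langle\partial^o F(0),\xi\rangle$ together with $|\xi|<2R$. I expect the only genuinely delicate point to be the geometric estimate in step \textbf{(a)}: though elementary, it is where all the numerical constants ($36$, $18$, $72$) originate, and it needs the case split between a ``long'' segment and a ``short'' segment with one endpoint far from $B_R$; everything else is bookkeeping, with a little extra care in {\it iv)} so that the single additive constant $|F(0)|+\tfrac{18}{\mu}|\partial^o F(0)|^2$ also covers the bounded range $|\xi|<2R$.

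\medskip
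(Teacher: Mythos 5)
Your items {\it i)}--{\it iii)} are correct and follow the paper's scheme: establish the geometric bound $\mathcal{H}^1([\xi,\xi']\setminus B_R)\ge\tfrac13\,|\xi-\xi'|$ when at least one endpoint lies outside $B_{2R}$, feed it into Lemma~\ref{lemunifconvh1} to get \eqref{eq1838}, then derive {\it ii)} and {\it iii)} by the same symmetrization and barycentric tricks the paper uses (summing \eqref{eq1838} and its $\xi\leftrightarrow\xi'$ swap for {\it ii)}; applying \eqref{eq1838} twice at $\xi_\theta$ with weights $\theta,1-\theta$ for {\it iii)}). You prove the geometric bound differently --- a chord-length estimate $\mathcal{H}^1\ge|\xi-\xi'|-2R$ when $|\xi-\xi'|\ge 3R$, and a distance-to-$B_R$ estimate $\mathcal{H}^1\ge\min(|\xi-\xi'|,|\xi|-R)$ in the complementary regime --- whereas the paper minimizes over directions and splits the three ranges of $r=|\xi-\xi'|$ by an explicit radial parametrization. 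Both are short and correct; yours is arguably cleaner.

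For {\it iv)}, your treatment of $|\xi|\ge 2R$ is exactly the paper's: apply \eqref{eq1838} at $0$ with $\zeta=\partial^o F(0)$, split $\tfrac{\mu}{36}=\tfrac{\mu}{72}+\tfrac{\mu}{72}$, and use Young. Your instinct that $|\xi|<2R$ needs separate treatment is right --- and is in fact more careful than the paper, whose proof only invokes \eqref{eq1838} for $\xi'\notin B_{2R}$ and silently ignores $B_{2R}$. But the patch you sketch does not close the gap: the plain convexity inequality $F(\xi)\ge F(0)+\langle\partial^o F(0),\xi\rangle$ carries no positive quadratic term, and the additive constant $|F(0)|+\tfrac{18}{\mu}|\partial^o F(0)|^2$ cannot in general absorb $\tfrac{\mu}{72}|\xi|^2$ on $B_{2R}$. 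Indeed \eqref{eq1830} fails as written: take $F(\xi)=(|\xi|-R/2)_+^2$, which is $1$-uniformly convex outside $B_R$ by Lemma~\ref{lm:coglione}, has $F(0)=0$ and $\partial^o F(0)=0$, yet vanishes identically on $B_{R/2}$. The correct statement requires enlarging the additive constant by a term of order $\mu R^2$ (after which your two-case strategy does work); since \eqref{eq1830} is not used elsewhere in the paper, this correction has no downstream consequences.
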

\begin{proof}
We claim that  for every \(\xi \in \mathbb{R}^N\) and every \(\xi'\in \mathbb{R}^N\setminus B_{2R}\),
\begin{equation}
\label{eq1891}
\mathcal{H}^1\left([\xi, \xi']\setminus B_R\right) \geq \frac{1}{3}\, |\xi-\xi'|.
\end{equation}
Let us fix $\xi'\in\mathbb{R}^N\setminus B_{2\,R}$ and let $\xi\in\mathbb{R}^N$. We set $r=|\xi'-\xi|$. Then we see that among all $\xi$ such that $|\xi-\xi'|=r$, the length of the set \([\xi, \xi']\setminus B_R\)  is minimal for the vector 
\[
\xi'':=t\,\xi',\qquad \mbox{ with } t<1 \mbox{ such that } (1-t)=\frac{r}{|\xi'|}.
\]
Then we have
\[
\mathcal{H}^1([\xi',\xi'']\setminus B_R)=\left\{\begin{array}{ll}
r,& \mbox{ if } r\le |\xi'|-R,\\
|\xi'|-R& \mbox{ if } |\xi'|-R<r<|\xi'|+R,\\
r-2\,R, & \mbox{ if } |\xi'|+R\le r.
\end{array}
\right.
\]
Observe that
\[
|\xi'|-R\ge \frac{|\xi'|}{2},
\]
and for $|\xi'|-R<r<|\xi'|+R$ we have 
\[
r<\frac{3}{2}\,|\xi'|\le 3\,(|\xi'|-R)=3\,\mathcal{H}^1([\xi',\xi'']\setminus B_R).
\]
Similarly, for $|\xi'|+R\le r$, we have
\[
\mathcal{H}^1([\xi',\xi'']\setminus B_R)=r-2\,R=\frac{r}{3}+\left(\frac{2}{3}\,r-2\,R\right)\ge\frac{r}{3}+\left(\frac{2}{3}\,|\xi'|-\frac{4}{3}\,R\right)\ge\frac{r}{3}.
\]
By recalling that $r=|\xi-\xi'|$, the claim \eqref{eq1891} follows.
The inequality \eqref{eq1838} can now be easily deduced from \eqref{eq1739} and \eqref{eq1891}. 
\vskip.2cm\noindent
Inequality \eqref{eq1838bis} can be easily obtained from \eqref{eq1838}. Indeed, by exchanging the role of $\xi$ and $\xi'$ in \eqref{eq1838} we get
\[
F(\xi')-F(\xi)\ge\langle \zeta,\xi'-\xi\rangle+\frac{\mu}{36}\, |\xi-\xi'|^2,
\]
and
\[
F(\xi)-F(\xi')\ge\langle \zeta',\xi-\xi'\rangle+\frac{\mu}{36}\, |\xi-\xi'|^2.
\]
By combining these two inequalities we get \eqref{eq1838bis}.
\vskip.2cm\noindent
Let us take \(\xi, \xi'\in \mathbb{R}^N\setminus B_{2R}\). For every \(\theta\in [0,1]\) and every \(\zeta\in \partial F(\theta\, \xi+(1-\theta)\,\xi')\) by \eqref{eq1838} we get
 \[
F(\xi)\geq F(\theta\,\xi + (1-\theta)\,\xi') +(1-\theta)\,\langle \zeta, \xi-\xi'\rangle +\frac{\mu}{36}\,(1-\theta)^2|\xi'-\xi|^2,
\]
and
\[
F(\xi')\geq F(\theta\,\xi + (1-\theta)\,\xi') +\theta\, \langle \zeta, \xi'-\xi\rangle +\frac{\mu}{36}\, \theta^2\,|\xi'-\xi|^2.
\]
Then \eqref{eq1834} can be obtained by multipliying the first inequality by $\theta$, the second one by $(1-\theta)$ and then summing up.
\vskip.2cm\noindent
Finally, we use \eqref{eq1838} with \(\xi'\in \mathbb{R}^N\setminus B_{2R}\), $\xi=0$ and $\zeta=\partial^o F(0)$ as in the statement, then we obtain
\[
\begin{split}
F(\xi') \geq F(0) + \langle \zeta, \xi'\rangle + 
\frac{\mu}{36}\,|\xi'|^2 &\geq \frac{\mu}{72}\, |\xi'|^2-\left(|F(0)|+\frac{18}{\mu}\,\,|\zeta|^2\right).
\end{split}
\]
where we used Young inequality in the last passage. This proves \eqref{eq1830}.
\end{proof}

\subsection{Approximation issues}

This section is devoted to prove some approximation results we used in the proof of the Main Theorem.
\begin{lm}
\label{lemma_Fk}
Let \(F:\mathbb{R}^N\to\mathbb{R}\) be a convex function, which is \(\mu-\)uniformly convex outside the ball \(B_R\). Then there exists a nondecreasing sequence \(\{F_k\}_{k\in \mathbb{N}}\) of smooth convex functions which converges to \(F\) uniformly on bounded sets. Moreover, for every \(k\geq 2R\), \(F_k\) is \((\mu/36)-\)uniformly convex outside the ball \(B_{R+1}\).
\end{lm}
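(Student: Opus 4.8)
The plan is to obtain $\{F_k\}$ by mollifying $F$ on an increasing sequence of scales, then correcting for the uniform convexity defect inside the annulus $B_{R+1}\setminus B_{R+\varepsilon_k}$ where the mollified function is not yet known to be uniformly convex. First I would fix a sequence of standard mollifiers $\{\rho_{\varepsilon}\}_{\varepsilon>0}\subset C^\infty_0(B_\varepsilon)$ and a sequence $\varepsilon_k\downarrow 0$ to be chosen, and set as a first attempt $\widetilde F_k = F*\rho_{\varepsilon_k}$. Each $\widetilde F_k$ is smooth and convex; since $F$ is convex and finite on $\mathbb{R}^N$ it is locally Lipschitz, so $\widetilde F_k\to F$ uniformly on bounded sets as $\varepsilon_k\to 0$, with the rate controlled by the local modulus of continuity of $F$. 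By part $(i)$ of Lemma~\ref{lmunifconvsndder}, applied with $R'=R+1$, for every $\xi\in B_{R+1}\setminus B_{R+\varepsilon_k}$ and every $\eta$,
\[
\langle \nabla^2\widetilde F_k(\xi)\,\eta,\eta\rangle \ge \Big(\min_{t\in[2R,\,2(R+1+\varepsilon_k)]}\mu\Big)\,|\eta|^2 = \mu\,|\eta|^2,
\]
and the same bound (with the appropriate larger annulus) holds for all $|\xi|\ge R+\varepsilon_k$; in particular $\widetilde F_k$ is $\mu$-uniformly convex outside $B_{R+\varepsilon_k}$ in the Hessian sense, hence outside $B_{R+\varepsilon_k}\supset$ nothing better than $B_R$ directly — this is the gap I need to close, since the claim asks for uniform convexity outside the fixed ball $B_{R+1}$.

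To upgrade this, I would add a small convex corrector supported away from the origin. Choose a cutoff: since I only need uniform convexity for $|\xi|\ge R+1$, it suffices that $F_k$ have a definite Hessian lower bound there, and I already have one for $|\xi|\ge R+\varepsilon_k$ once $\varepsilon_k<1$. So in fact, for $\varepsilon_k<1$, $\widetilde F_k$ already satisfies $\langle \nabla^2\widetilde F_k(\xi)\eta,\eta\rangle\ge\mu|\eta|^2$ for all $|\xi|\ge R+1$; by part $(ii)$ of Lemma~\ref{lmunifconvsndder} (applied with the ball $B_{R+1}$ in place of $B_R$, noting $\widetilde F_k$ is itself smooth so further mollification is harmless, or simply by integrating the Hessian bound along segments as in the proof of that lemma) it follows that $\widetilde F_k$ is $\mu$-uniformly convex outside $B_{R+1}$. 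Since $\mu/36\le\mu$, a fortiori it is $(\mu/36)$-uniformly convex outside $B_{R+1}$, which is what the statement requires. Thus the corrector is not actually needed; the factor $36$ is slack deliberately retained for consistency with \eqref{eqFkuc1}.

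It remains only to arrange monotonicity $F_k\le F_{k+1}$. This does not hold for bare mollifications in general, so I would pass to a running maximum: after choosing $\varepsilon_k\downarrow 0$ fast enough that $\|\widetilde F_k-F\|_{L^\infty(B_k)}\le 1/k$, set $F_k:=\max_{1\le j\le k}\widetilde F_j$ — no, this destroys smoothness at crossing loci. The cleaner fix is to define $F_k := \widetilde F_k - \delta_k$ for a suitable $\delta_k\downarrow 0$ chosen so that on $B_{k}$ one has $F_{k}\le F \le \widetilde F_{k+1}$ and $F_k\le F_{k+1}$; concretely, pick $\varepsilon_k$ decreasing so that $\operatorname{osc}$ bounds give $\widetilde F_k - \widetilde F_{k+1}\le 2/k$ on $B_k$, and take $\delta_k = \sum_{j\ge k} 2/j$ truncated appropriately — here I would instead invoke a standard lemma (a convex function on $\mathbb{R}^N$ admits a nondecreasing sequence of smooth convex approximants converging uniformly on compacta), since subtracting a constant from each mollification does not affect the Hessian bounds and monotonization by constants is elementary. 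The main obstacle, such as it is, is purely bookkeeping: choosing $\varepsilon_k$ and the constants $\delta_k$ jointly so that monotonicity, uniform convergence on bounded sets, and the requirement $\varepsilon_k<1$ for all $k\ge 2R$ are simultaneously met; the uniform-convexity part is immediate from Lemma~\ref{lmunifconvsndder}. This completes the proof.
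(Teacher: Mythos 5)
The uniform-convexity analysis in your proposal is sound: for $\varepsilon_k<1$, Lemma~\ref{lmunifconvsndder}~$(i)$ gives $\langle D^2(F*\rho_{\varepsilon_k})(\xi)\,\eta,\eta\rangle\ge\mu|\eta|^2$ for $|\xi|\ge R+\varepsilon_k$, which includes $\mathbb{R}^N\setminus B_{R+1}$, and the smoothness of $F*\rho_{\varepsilon_k}$ lets you integrate this directly to get $\mu$-uniform convexity outside $B_{R+1}$ (even better than the $\mu/36$ claimed). The genuine gap is the monotonicity step. Setting $\widetilde F_k=F*\rho_{\varepsilon_k}$ and then defining $F_k=\widetilde F_k-\delta_k$ for constants $\delta_k$ cannot work in general, because the convolution error $F*\rho_\varepsilon-F$ is \emph{not} uniformly bounded on $\mathbb{R}^N$: by Jensen it is nonnegative, and for a smooth $F$ it is of order $\varepsilon^2\,\lambda_{\max}(D^2F(x))$, which is unbounded in $x$ whenever $F$ has unbounded Hessian (e.g.\ $F(x)=|x|^4$, allowed by the hypotheses since no upper growth bound is assumed). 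Consequently $\widetilde F_k-\widetilde F_{k+1}\approx c\,(\varepsilon_k^2-\varepsilon_{k+1}^2)|x|^2\to+\infty$ as $|x|\to\infty$, and no finite constants $\delta_k$ can give $F_k\le F_{k+1}$ globally. The ``running maximum'' you considered and discarded was closer in spirit to the right idea, but as you noted it destroys smoothness.

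Your fallback to a ``standard lemma'' does not close the gap either, because the standard monotone smooth approximation of a convex function (truncated supremum of tangent \emph{affine} functions, then mollify and shift) produces globally Lipschitz, piecewise-affine intermediate functions whose Hessian vanishes a.e., so the uniform-convexity bound outside $B_{R+1}$ is lost and you would have to rebuild it. The paper's construction addresses both difficulties at once: it replaces the tangent planes by tangent \emph{paraboloids}, setting
\[
\widetilde F_k(x)=\sup_{|y|\le k,\ \zeta\in\partial F(y)}\Big[F(y)+\langle\zeta,x-y\rangle+\tfrac{\mu'}{2}|x-y|^2\,1_{\mathbb{R}^N\setminus B_{2R}}(y)\Big],\qquad \mu'=\mu/36,
\]
which is $\le F$, equals $F$ on $\overline{B_k}$, is $\mu'$-uniformly convex outside $B_R$, and (crucially) has \emph{controlled quadratic} growth and a quantitative monotonicity gain $\widetilde F_{k+1}\ge\widetilde F_k+\mu'(|x|-k-1)_+$. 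That linear-in-$|x|$ gain is exactly what absorbs the linear-in-$|x|$ Lipschitz estimate for $\widetilde F_k$ when one finally mollifies and subtracts a constant $1/k$, making the choice of $\varepsilon_k$ possible. The factor $36$ is not deliberate slack in the lemma's statement: it is the genuine loss incurred in passing from $\mu$-uniform convexity outside $B_R$ to the pointwise subgradient inequality \eqref{eq1838} (via Lemma~\ref{lemunifconvh1} and the $1/3$ in \eqref{eq1891}), which is what makes the supremum of paraboloids sit below $F$.
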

\begin{proof}
Let us set for simplicity $\mu'=\mu/36$.
For every $k\in\mathbb{N}$, we define at first
\[
\widetilde{F}_k(x):=\sup_{\substack{|y|\leq k\\ \zeta\in \partial F(y)}} \left[F(y)+\langle \zeta, x-y\rangle+ \frac{\mu'}{2}\,|x-y|^2\, 1_{\mathbb{R}^N\setminus B_{2\,R}}(y)\right],\qquad x\in\mathbb{R}^N.
\]
Of course, this is a nondecreasing sequence of convex functions.
If $k\le 2\,R$, then by convexity of $F$ for every $|y|\le k$, every $\zeta\in\partial F(y)$ and every $x\in\mathbb{R}^N$ we get
\begin{equation}
\label{pulmino}
F(y)+\langle \zeta, x-y\rangle+ \frac{\mu'}{2}\,|x-y|^2\, 1_{\mathbb{R}^N\setminus B_{2\,R}}(y)\le F(x).
\end{equation}
If $k>2\, R$ and $|y|\le k$, we have two possibilities: either $|y|\le 2\,R$ or $|y|>2\,R$. In the first case we still have \eqref{pulmino} for every $\zeta\in \partial F(y)$ and $x\in\mathbb{R}^N$, simply by convexity of $F$. In the second case, we can appeal to Lemma \ref{lemma_uc}: indeed, for every \(x \in \mathbb{R}^N\) and every \(\zeta\in \partial F(y)\), we have
\[
F(x)\geq F(y)+\langle \zeta, x-y\rangle+ \mu'\,|x-y|^2.
\]
In any case, we obtain that for every \(x\in \mathbb{R}^N\) 
\[
\widetilde{F}_k(x)\leq F(x),
\] 
and the equality holds when \(x\in \overline{B_k}\). In particular, for every \(k\geq R\) the function \(\widetilde{F}_k\) is \(\mu-\)uniformly convex on \(\overline{B_k} \setminus B_R\). 
\par
When \(k\geq 2\,R\) and \(|x|\geq 2\,R\), we claim that
\begin{equation}
\label{eq1813}
\widetilde{F}_k(x)=\sup_{\substack{2\,R\leq |y|\leq k\\ \zeta\in \partial F(y)}} \left[F(y)+\langle \zeta, x-y\rangle+ \frac{\mu'}{2}\,|x-y|^2\right].
\end{equation}
This follows from the fact that for every \(y_0 \in B_{2R}\) and \(\zeta_0\in \partial F(y_0)\), there exists \(y\in \overline{B_{k}}\setminus B_{2R}\) and \(\zeta\in \partial F(y)\) such that  
\begin{equation}\label{eq1817}
F(y)+\langle \zeta, x-y\rangle\geq F(y_0)+\langle \zeta_0, x-y_0\rangle.
\end{equation}
Indeed, take any \(y\in [y_0,x]\cap(\overline{B_k}\setminus B_{2R})\). Then, by convexity of \(F\),
\[
F(y)\geq F(y_0) + \langle \zeta_0, y-y_0\rangle.
\]
Hence, by using this and the fact that \(y-x=t\,(y_0-y)\) for some \(t\geq 0\), we can infer
\[
\begin{split}
F(y)+\langle \zeta, x-y\rangle &\geq F(y_0)+\langle \zeta_0, y-y_0\rangle+\langle \zeta, x-y\rangle\\
&=F(y_0)+\langle \zeta_0, x-y_0\rangle+\langle \zeta_0-\zeta, y-x\rangle\\
&=F(y_0)+\langle \zeta_0, x-y_0\rangle+t\,\langle \zeta_0-\zeta, y_0-y\rangle\\
&\geq F(y_0)+\langle \zeta_0, x-y_0\rangle.
\end{split}
\]
In the last line, we have used 
\[
\langle \zeta_0-\zeta,y_0-y\rangle\ge 0,
\]
which follows from the convexity of \(F\), by recalling that $\zeta\in\partial F(y)$ and $\zeta_0\in\partial F(y_0)$.  This proves \eqref{eq1817} and thus \eqref{eq1813}.
\par
It follows that \(\widetilde{F}_k\) is \(\mu'-\)uniformly convex on \(\mathbb{R}^N\setminus B_{2\,R}\) as the supremum of \(\mu'-\)uniformly convex functions on \(\mathbb{R}^N\setminus B_{2\,R}\). Since $\mu'<\mu$, on the whole we get that \(\widetilde{F}_k\) is \(\mu'-\)uniformly convex on \(\mathbb{R}^N\setminus B_R\).
\vskip.2cm\noindent
In the remaining part of the proof, we fix some \(k\geq 2\,R\).
We claim that  for every  \(x\in \mathbb{R}^N\),
\begin{equation}
\label{eqWaterloo}
\widetilde{F}_{k+1}(x)\geq \widetilde{F}_{k}(x) + \mu'\,(|x|-k-1)_+.
\end{equation}
If $|x|\le k+1$ this is immediate, thus let us assume that \(|x|>k+1\). Let \(y_0\in \overline{B_k}\) and \(\zeta_0\in \partial F(y_0)\) achieving the supremum in the definition of $\widetilde F_k$, i.e. 
\begin{equation}\label{eqbrr}
\widetilde{F}_k(x) = F(y_0) + \langle \zeta_0, x-y_0 \rangle  + \frac{\mu'}{2}|x-y_0|^2\,1_{\mathbb{R}^N\setminus B_{2\,R}}(y_0).
\end{equation}
Let \(y\in \partial B_{k+1}\cap [x,y_0]\) be such that \([y,x]\cap B_R = \emptyset\).
\begin{figure}[h]
\includegraphics[scale=.35]{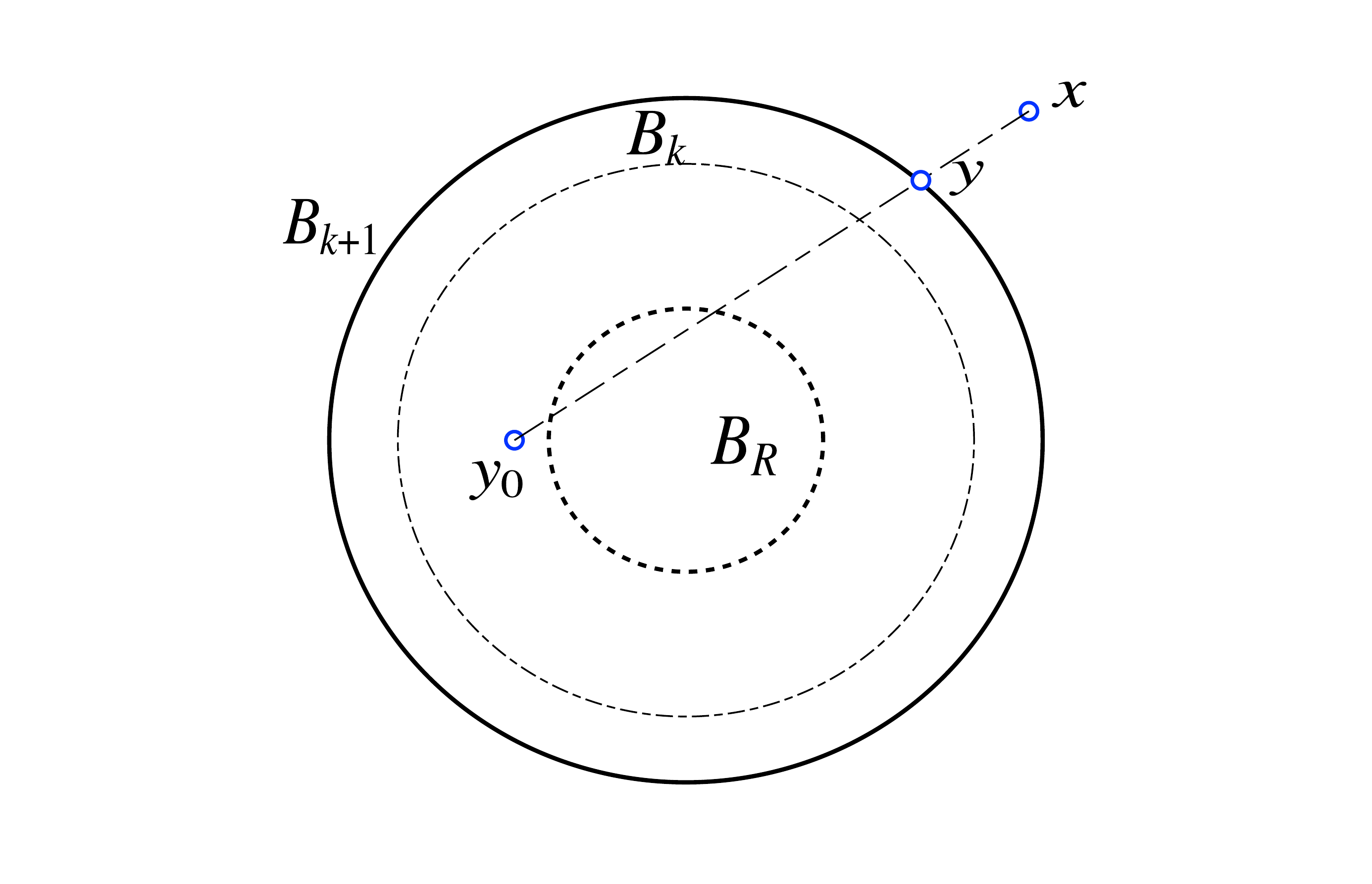}
\caption{The construction for the proof of \eqref{eqWaterloo}.}
\end{figure} 
Then by definition of \(\widetilde{F}_{k+1}\) and Lemma \ref{lemma_uc}, we get for every \(\zeta \in \partial F(y)\) and every \(\zeta_0 \in \partial F(y_0)\)
\[
\widetilde{F}_{k+1}(x)\geq F(y)+\langle \zeta, x-y\rangle + \frac{\mu'}{2}|x-y|^2,
\]
and
\[
F(y)\geq F(y_0) + \langle \zeta_0, y-y_0\rangle +\mu'\,|y-y_0|^2.
\]
Combining these two inequalities, we get
\begin{equation}
\label{eq1829}
\widetilde{F}_{k+1}(x)\geq F(y_0) + \langle \zeta_0, x-y_0\rangle + \frac{\mu'}{2}\,|x-y_0|^2 + A, 
\end{equation}
where
\begin{equation}
\label{eqA}
A= \langle \zeta_0-\zeta, y-x\rangle + \frac{\mu'}{2}\,|x-y|^2 +\mu'\,|y-y_0|^2 - \frac{\mu'}{2}\,|x-y_0|^2.
\end{equation}
If we now use \eqref{eq1838bis}, we obtain
\[
\langle \zeta_0-\zeta, y_0-y\rangle \geq 2\,\mu'\,|y_0-y|^2.
\]
Since \(y-x=(y_0-y)\,|y-x|/|y_0-y|\), this implies
\[
\langle \zeta_0-\zeta, y-x\rangle \geq 2\,\mu'\,|y_0-y|\,|y-x|.
\]
By inserting this into \eqref{eqA} and observing that \(|x-y_0|=|x-y|+|y-y_0|\), we obtain
\[
A\geq \mu'\,|y_0-y|\,|y-x|+\frac{\mu'}{2}\,|y-y_0|^2.
\]
In view of \eqref{eq1829}, \eqref{eqbrr}  and also using the fact that 
\[
|y_0-y|\geq 1 \qquad \mbox{ and }\qquad |y-x|\geq |x|-|y|\geq |x|-(k+1),
\] 
this finally implies 
\[
\widetilde{F}_{k+1}(x)\geq \widetilde{F}_k(x)+\mu'\,(|x|-k-1),
\]
and \eqref{eqWaterloo} is proved.
\vskip.2cm\noindent
We now establish the following Lipschitz estimate for $\widetilde F_k$: for every \(x, x'\in \mathbb{R}^N\),
\begin{equation}
\label{eqAusterlitz}
\left|\widetilde{F}_k(x)-\widetilde{F}_k(x')\right| \le \left(L_k + \frac{\mu'}{2}(|x|+|x'|+2\,k)\right)|x-x'|,
\end{equation}
where  \(L_k\) is the Lipschitz constant of \(F\) on \(B_k\). 
Indeed, for every \(y\in \overline{B_k}\) and every \(\zeta \in \partial F(y)\), we have\footnote{We use the following elementary manipulations
\[
|x-y|^2=|x'-y|^2+(|x|^2-|x'|^2)+2\,\langle x'-x,y\rangle,
\]
and 
\[
\begin{split}
(|x|^2-|x'|^2)+2\,\langle x'-x,y\rangle&\le (|x|-|x'|)\,(|x|+|x'|)+2\, |x-x'|\,|y|\\
&\le |x-x'|\,(|x|+|x'|+2\,|y|).
\end{split}
\]}
\[
\begin{split}
F(y) +\langle \zeta, x-y\rangle + \frac{\mu'}{2}\,|x-y|^2\, 1_{\mathbb{R}^N\setminus B_{2R}}(y) &\leq F(y) + \langle \zeta, x'-y\rangle + \frac{\mu'}{2}\,|x'-y|^2\, 1_{\mathbb{R}^N\setminus B_{2R}}(y) \\ & \quad + |\zeta|\,|x-x'| +  \frac{\mu'}{2}\,|x-x'|\,\left(|x|+|x'|+2\,|y|\right)\\
& \leq \widetilde{F}_k(x') + \left(L_k + \frac{\mu'}{2}(|x|+|x'|+2\,k)\right)|x-x'|.
\end{split}
\]
By exchanging the role of $x$ and $x'$ we get \eqref{eqAusterlitz}.
\par
Let us introduce a family \(\{\rho_{\varepsilon}\}_{\varepsilon>0} \subset C^{\infty}_0(B_{\varepsilon})\) of smooth mollifiers. For some sequence \(\{\varepsilon_k\}_{k\in \mathbb{N}}\subset (0,1/2)\) to be specified later, let us consider
\[
F_k(x):=\widetilde{F}_k*\rho_{\varepsilon_k}-\frac{1}{k}.
\]
By Lemma \ref{lmunifconvsndder}, every \(F_k\) is a smooth \(\mu'-\)uniformly convex function outside \(B_{R+1}\) and the sequence \(\{F_k\}_{k\in \mathbb{N}}\)  uniformly converges on bounded sets to \(F\). It remains to prove that \(\{F_k\}_{k\in \mathbb{N}}\) is nondecreasing.
By \eqref{eqAusterlitz}, for every \(x\in \mathbb{R}^N\),
\[
F_k(x)\leq \widetilde{F}_k(x) + \left(L_k + \frac{\mu'}{2}(2\,|x|+\varepsilon_k+2\,k)\right)\varepsilon_k-\frac{1}{k}.
\]
Moreover, by \eqref{eqWaterloo} we have
\[
\widetilde{F}_k(x)\leq \widetilde{F}_{k+1}(x)-\mu'\,(|x|-k-1)_{+}.
\]
 Since \(\widetilde{F}_{k+1}\) is convex, by Jensen inequality we also have 
\[
\widetilde{F}_{k+1}(x)\leq \widetilde{F}_{k+1}*\rho_{\varepsilon_{k+1}}(x).
\] 
Hence in order to have \(F_k(x)\leq F_{k+1}(x)\) it is sufficient that
for every \(x\in \mathbb{R}^N\)
\begin{equation}
\label{eqBelleepoque}
\left(L_k + \frac{\mu'}{2}(2\,|x|+\varepsilon_k+2k)\right)\varepsilon_k-\frac{1}{k} \leq \mu'\,(|x|-k-1)_{+} -\frac{1}{k+1}.
\end{equation}
When \(|x|\leq 2\,(k+1)\), by recalling that $\varepsilon_k<1/2$ we have
\[
\left(L_k + \frac{\mu'}{2}(2\,|x|+\varepsilon_k+2\,k)\right)\varepsilon_k-\frac{1}{k}\le \left(L_k + \frac{\mu'}{2}(6\,k+5)\right)\varepsilon_k-\frac{1}{k},
\]
while for the right-hand side of \eqref{eqBelleepoque} 
\[
\mu'\,(|x|-k-1)_{+} -\frac{1}{k+1}\ge -\frac{1}{k+1}.
\]
Hence \eqref{eqBelleepoque} holds true provided
\begin{equation}
\label{choiceeps1}
\varepsilon_k\leq \Gamma^-_k:=\left(\frac{1}{k}-\frac{1}{k+1}\right)\,\frac{1}{L_k +(6\,k+5)\,\mu'/2}.
\end{equation}
When \(|x|>2\,(k+1)\), the left-hand side of \eqref{eqBelleepoque} can be estimated by
\[
\left(L_k + \frac{\mu'}{2}(2\,|x|+\varepsilon_k+2\,k)\right)\varepsilon_k-\frac{1}{k}\le \varepsilon_k\,\mu'\,|x|+\varepsilon_k\left(L_k+\mu'\,(k+1)\right)-\frac{1}{k}
\]
while for the right-hand side we have 
\[
\mu'\,(|x|-k-1)_{+} -\frac{1}{k+1}\ge \frac{\mu'}{2}|x|-\frac{1}{k+1}.
\] 
In that case, by recalling that $\varepsilon_k<1/2$ we only need to take
\begin{equation}
\label{choiceeps2}
\varepsilon_k\leq \Gamma^+_k:=\left(\frac{1}{k}-\frac{1}{k+1}\right)\frac{1}{L_k +\mu'\,(k+1)}.
\end{equation}
Observe that both $\{\Gamma^+_k\}_{k\in\mathbb{N}}$ and $\{\Gamma^-_k\}_{k\in\mathbb{N}}$ converge to $0$ as $k$ goes to $\infty$. Hence, by choosing the decreasing sequence 
\[
\varepsilon_1=\min\left\{\Gamma^-_1,\,\Gamma^+_1,\,\frac{1}{2}\right\},\qquad \varepsilon_{k+1}=\min\{\Gamma^-_k,\,\Gamma^+_k,\,\varepsilon_k\},\qquad k\in\mathbb{N},
\]
this satisfies both \eqref{choiceeps1} and \eqref{choiceeps2} and thus \(\{F_k\}_{k\in \mathbb{N}}\) satisfies all the required properties.
\end{proof}

\begin{lm}
\label{lm:pierresque}
Let $F:\mathbb{R}^N\to\mathbb{R}$ be a convex function, which is $\Phi-$uniformly convex outside the ball $B_R$. Then for every $Q>R$ there exists a convex function $F_Q:\mathbb{R}^N\to\mathbb{R}$ with the following properties:
\begin{enumerate}
\item[{\it i)}] $F_Q\equiv F$ in $B_Q$;
\vskip.2cm
\item[{\it ii)}] $F$ is $\mu_Q-$uniformly convex outside the ball $B_R$, where\footnote{Observe that $\mu_Q>0$ thanks to fact that $\Phi$ is continuous and $\Phi(t)>0$ for $t>0$.} 
\begin{equation}
\label{defeqmuQ}
\mu_Q=\min\left\{1,\min_{t\in [2\,R, 4\,Q]}\Phi(t)\right\}.
\end{equation}
\end{enumerate}
\end{lm}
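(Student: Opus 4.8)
The plan is to leave $F_Q$ equal to $F$ on a ball slightly larger than $B_Q$ — precisely on $\overline{B_{2Q}}$, which contains $B_Q$ — and to extend it past $\overline{B_{2Q}}$ by a family of supporting paraboloids of curvature exactly $\mu_Q$, arranged so that the result is globally convex and glues continuously with $F$ along $\partial B_{2Q}$. Concretely, I would set, for $x\in\mathbb{R}^N$,
\[
F_Q(x):=\sup_{\substack{y\in\overline{B_{2Q}}\\ \zeta\in\partial F(y)}}\left[F(y)+\langle\zeta,x-y\rangle+\frac{\mu_Q}{2}\,c_y(x)^2\right],
\]
where $c_y(x)=\big(\langle y/|y|,x\rangle-2Q\big)_+$ for $y\neq 0$ and $c_0\equiv 0$. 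Each term is affine in $x$ plus the square of a nonnegative convex function of $x$, hence convex; moreover, since $y\in\overline{B_{2Q}}$ has $|\zeta|\le\mathrm{Lip}(F;\overline{B_{2Q}})$ and $0\le c_y(x)\le(|x|-2Q)_+$, each term is bounded above by $C(1+|x|^2)$ uniformly in $(y,\zeta)$, so $F_Q$ is a well-defined real-valued convex function. This takes care of convexity, part of property \emph{ii)}.

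\textbf{Property i).} If $x\in\overline{B_{2Q}}$ then $\langle y/|y|,x\rangle\le|x|\le 2Q$ by Cauchy--Schwarz, so every correction $c_y(x)$ vanishes; thus $F_Q(x)=\sup_{y\in\overline{B_{2Q}},\,\zeta\in\partial F(y)}\left[F(y)+\langle\zeta,x-y\rangle\right]$, which equals $F(x)$ (the choice $y=x$ realizes $F(x)$, and convexity of $F$ bounds every term by $F(x)$). In particular $F_Q\equiv F$ on $B_Q$, which is property \emph{i)}.

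\textbf{Uniform convexity outside $B_R$.} Let $[z,z']\cap B_R=\emptyset$; in particular $|z|,|z'|\ge R$. I would split $[z,z']$ at its at most two intersection points with $\partial B_{2Q}$, obtaining finitely many consecutive sub‑segments, each either contained in $\overline{B_{2Q}}\setminus B_R$ or contained in $\mathbb{R}^N\setminus B_{2Q}$. On a sub‑segment $[\xi,\xi']\subset\overline{B_{2Q}}\setminus B_R$ one has $F_Q=F$ and $|\xi|+|\xi'|\in[2R,4Q]$, so $\Phi(|\xi|+|\xi'|)\ge\mu_Q$ by \eqref{defeqmuQ}; hence the $\mu_Q$-uniform convexity inequality (in the first‑order form \eqref{equc_bis}) holds on such a piece — this is exactly where the constant $4Q$ in the definition of $\mu_Q$ is used. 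On a sub‑segment lying outside $\overline{B_{2Q}}$ one uses the construction: evaluating $F_Q$ at the endpoints against the paraboloid anchored at the projection $\bar y\in\partial B_{2Q}$ of a suitable endpoint (for which $c_{\bar y}$ coincides with the signed distance past the supporting hyperplane $\{\langle \bar y/|\bar y|,\cdot\rangle=2Q\}$) yields a first‑order inequality with curvature $\mu_Q$ on that piece as well. Finally I would chain these first‑order inequalities across consecutive pieces using the monotonicity of $\partial F_Q$, exactly as in the proof of Lemma~\ref{lemunifconvh1}, to recover $F_Q(\theta z+(1-\theta)z')\le\theta F_Q(z)+(1-\theta)F_Q(z')-\frac12\theta(1-\theta)\mu_Q|z-z'|^2$, which is \eqref{equc} with $\Phi\equiv\mu_Q$. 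Together with $0<\mu_Q\le1$ this gives property \emph{ii)}.

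\textbf{Main obstacle.} The delicate point is the behaviour of the outer extension on \emph{non‑radial} segments lying outside $\overline{B_{2Q}}$: there the correction $c_y$ is only a one‑sided quadratic, so one must pick the anchor $y$ carefully (and possibly combine two paraboloid inequalities, one at each endpoint's projection) to see that the extension is genuinely $\mu_Q$-uniformly convex and not merely convex, and one must check that crossing $\partial B_{2Q}$ in the chaining argument loses no part of the constant $\mu_Q$. The region inside $\overline{B_{2Q}}$ causes no trouble precisely because $F_Q=F$ there and the choice $\mu_Q=\min\{1,\min_{[2R,4Q]}\Phi\}$ makes $F$ itself $\mu_Q$-uniformly convex along every segment of $\overline{B_{2Q}}\setminus B_R$.
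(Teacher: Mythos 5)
Your construction is genuinely different from the paper's, and while the verification of property \emph{i)} is fine, your sketch of property \emph{ii)} has two real gaps, one of which you half-notice in the ``Main obstacle'' paragraph but do not resolve.

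First, the one-sided paraboloid $x\mapsto F(y)+\langle\zeta,x-y\rangle+\tfrac{\mu_Q}{2}\bigl(\langle y/|y|,x\rangle-2Q\bigr)_+^2$ has a \emph{rank-one} second derivative (in the direction $y/|y|$, and only on the half-space $\langle y/|y|,\cdot\rangle>2Q$). So ``evaluating $F_Q$ at the endpoints against the paraboloid anchored at the projection of a suitable endpoint'' cannot yield a first-order inequality with curvature $\mu_Q$ on an outer sub-segment $[\xi,\xi']$ unless $\xi'-\xi$ happens to be (close to) radial; if $\xi'-\xi$ is tangential, the term you pick is affine along the sub-segment. For example with $\xi=3Qe_1$, $\xi'=3Qe_2$, the paraboloid anchored at $2Qe_1$ satisfies $c_{2Qe_1}(\xi')=0$, and likewise with the roles swapped, so neither anchor (nor ``combining two'') produces the quadratic gain you need. (The full supremum over $y$ may still have the right curvature through an envelope effect driven by the tangential curvature of $F$ near $\partial B_{2Q}$, but this is exactly the nontrivial point, and your sketch does not prove it.)

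Second, even granting the piecewise first-order inequalities, chaining them across the break points on $\partial B_{2Q}$ does \emph{not} recover the constant $\mu_Q$ in \eqref{equc}. Chaining as in the proof of Lemma~\ref{lemunifconvh1} gives a lower bound involving $\sum_i|\xi_i-\xi_{i-1}|^2$, which for collinear points is strictly smaller than $|z-z'|^2$ as soon as there is more than one piece; this is precisely why that lemma ends with the constant $\mu/4$ rather than $\mu/2$. A segment outside $B_R$ can meet $\partial B_{2Q}$ twice, hence split into three pieces, so you would lose up to a factor $3$. Since the statement asserts $\mu_Q$-uniform convexity (and this quantitative constant is used downstream, e.g.\ in \eqref{lipschitz}), this loss is not innocuous.

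The paper's proof avoids both problems by a much simpler construction, $F_Q=F+\mu_Q J_Q$ with $J_Q(x)=(|x|-Q)_+^2$, and by verifying uniform convexity through the \emph{local} second-order criterion of Lemma~\ref{lmunifconvsndder}: after mollification, $D^2(F*\rho_\varepsilon)\geq\mu_Q$ on $B_{2Q+\varepsilon}\setminus B_{R+\varepsilon}$ (from the $\Phi$-uniform convexity of $F$ and the definition \eqref{defeqmuQ}), while $D^2(\mu_Q J_Q*\rho_\varepsilon)\geq\mu_Q$ outside $B_{2Q+\varepsilon}$ by Lemma~\ref{lm:coglione}, which gives a \emph{full-rank} Hessian bound — exactly what your one-sided paraboloids lack. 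Because the criterion is pointwise in the Hessian and then integrated (via Lemma~\ref{lmunifconvsndder}(ii)), no chaining is needed and no constant is lost. If you want to salvage your construction you would have to replace the chaining by a Hessian-type argument for the envelope and handle the tangential directions; the paper's additive construction is the cleaner route.
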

\begin{proof}
For every $Q>R$, we define the function $F_Q:\mathbb{R}^N\to\mathbb{R}$ by
\[
F_Q(x)=F(x)+\mu_Q\,J_Q(x),\qquad \mbox{ where }J_Q(x):=(|x|-Q)_+^2.
\]
Of course, this is a convex function such that $F_Q\equiv F$ in $B_Q$. 
\vskip.2cm\noindent
In order to verify property {\it ii)}, we first observe that $\mu_Q\,J_Q$ is $\mu_Q-$uniformly convex outside $B_{2\,Q}$ (see Lemma \ref{lm:coglione} below). 
We consider again a sequence \(\{\rho_{\varepsilon}\}_{\varepsilon>0}\subset C^{\infty}_0(B_{\varepsilon})\) of standard mollifiers. Then for every \(\eta \in \mathbb{R}^N\) and every \(\xi \in \mathbb{R}^{N}\setminus B_{R+\varepsilon}\), we have
\[
\langle D^2(F_Q*\rho_{\varepsilon})(\xi)\,\eta, \eta \rangle = \langle D^2(F*\rho_{\varepsilon})(\xi)\,\eta, \eta \rangle + \langle D^2 (J_Q*\rho_{\varepsilon}(\xi))\,\eta, \eta \rangle. 
\]
Since  $F$ is $\Phi-$uniformly convex outside $B_R$, the first part of Lemma \ref{lmunifconvsndder} implies that when \(\xi \in B_{2\,Q+\varepsilon}\setminus  B_{R+\varepsilon}\),
\[
\langle D^2(F*\rho_{\varepsilon})(\xi)\eta, \eta \rangle \geq \left(\min_{t\in [2\,R, 4(Q+\varepsilon)]}\Phi(t)\right)\,|\eta|^2.
\]
Since $\mu_Q\,J_Q$ is $\mu_Q-$uniformly convex outside $B_{2\,Q}$, we get similarly when \(\xi \in \mathbb{R}^N\setminus  B_{2\,Q+\varepsilon}\),
\[
\langle D^2(\mu_QJ_Q*\rho_{\varepsilon})(\xi)\,\eta, \eta \rangle \geq \mu_Q\,|\eta|^2.
\]
In any case, we thus have for every $\xi\in \mathbb{R}^N\setminus B_{R+\varepsilon}$
\[
\langle D^2(F_Q*\rho_{\varepsilon})(\xi)\,\eta, \eta \rangle \geq \mu_Q\,|\eta|^2.
\]
By the second part of Lemma \ref{lmunifconvsndder}, this proves that  \(F_Q\) is \(\mu_Q\) uniformly convex outside \(B_R\).
\end{proof}

\begin{lm}[A useful function]
\label{lm:coglione}
The function
\[
{J}_Q(x)=(|x|-Q)_+^2,
\]
is $1-$uniformly convex outside the ball $B_{2\,Q}$. 
\end{lm}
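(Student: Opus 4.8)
The plan is to reduce the claim to a pointwise Hessian estimate and then invoke part~$(ii)$ of Lemma~\ref{lmunifconvsndder}. First I would note that although $J_Q$ is merely $C^1$ across the sphere $\partial B_Q$, it is smooth on the open set $\mathbb{R}^N\setminus\overline{B_Q}$, which comfortably contains the region where the estimate is needed, since $2Q>Q$. Writing $J_Q(x)=f(|x|)$ with $f(r)=(r-Q)^2$ for $r>Q$, a direct computation gives, for $|x|>Q$,
\[
\nabla^2 J_Q(x)=f''(|x|)\,\frac{x\otimes x}{|x|^2}+\frac{f'(|x|)}{|x|}\left(I-\frac{x\otimes x}{|x|^2}\right)=2\,\frac{x\otimes x}{|x|^2}+\left(2-\frac{2\,Q}{|x|}\right)\left(I-\frac{x\otimes x}{|x|^2}\right).
\]
Hence the eigenvalues of $\nabla^2 J_Q(x)$ are $2$ in the radial direction and $2-2\,Q/|x|$, with multiplicity $N-1$, in the tangential directions. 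For $|x|\ge 2\,Q$ both are $\ge 1$, so $\nabla^2 J_Q(x)\ge I$ on $\mathbb{R}^N\setminus B_{2\,Q}$.

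Next I would feed this into Lemma~\ref{lmunifconvsndder}$(ii)$ with $\mu=1$ and with $B_{2\,Q}$ playing the role of $B_R$. Fixing a family of standard mollifiers $\{\rho_\varepsilon\}_{\varepsilon>0}\subset C^\infty_0(B_\varepsilon)$ and a point $x$ with $|x|\ge 2\,Q+\varepsilon$, every $w\in B_\varepsilon(x)$ satisfies $|w|>|x|-\varepsilon\ge 2\,Q>Q$, so $J_Q$ is $C^2$ on $B_\varepsilon(x)$ and therefore
\[
\nabla^2\big(J_Q*\rho_\varepsilon\big)(x)=\int_{B_\varepsilon}\nabla^2 J_Q(x-y)\,\rho_\varepsilon(y)\,dy\ge \left(\int_{B_\varepsilon}\rho_\varepsilon(y)\,dy\right)I=I,
\]
because $|x-y|\ge 2\,Q$ for every $y\in B_\varepsilon$. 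Thus $\langle \nabla^2(J_Q*\rho_\varepsilon)(x)\,\eta,\eta\rangle\ge|\eta|^2$ for every $\eta\in\mathbb{R}^N$, every $\varepsilon>0$ and every $|x|\ge 2\,Q+\varepsilon$, and Lemma~\ref{lmunifconvsndder}$(ii)$ then yields that $J_Q$ is $1$-uniformly convex outside $B_{2\,Q}$.

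I expect no serious obstacle here; the only point deserving a line of care is the mild non-smoothness of $J_Q$ across $\partial B_Q$, which is harmless precisely because the estimate is invoked only at distance $\ge 2\,Q$ from the origin, well inside the region where $J_Q$ is smooth. If one prefers to avoid Lemma~\ref{lmunifconvsndder} entirely, an equivalent route is available: for $z,z'$ with $[z,z']\cap B_{2\,Q}=\emptyset$ the whole segment lies in $\mathbb{R}^N\setminus\overline{B_Q}$, so the function $\theta\mapsto J_Q(\theta\,z+(1-\theta)\,z')+\tfrac12\,\theta\,(1-\theta)\,|z-z'|^2$ is $C^2$ on $[0,1]$ with second derivative $\langle\nabla^2 J_Q(\theta\,z+(1-\theta)\,z')(z-z'),z-z'\rangle-|z-z'|^2\ge 0$, hence convex; since it vanishes at $\theta=0$ and $\theta=1$, it is $\le 0$ on $[0,1]$, which is exactly inequality~\eqref{equc} with $\Phi\equiv 1$.
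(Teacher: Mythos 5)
Your Hessian computation is exactly the one in the paper: after rewriting, both expressions reduce to $\bigl(2-\tfrac{2Q}{|x|}\bigr)\mathrm{Id}_N+\tfrac{2Q}{|x|}\,\tfrac{x\otimes x}{|x|^2}$, with eigenvalues $2$ (radial) and $2-2Q/|x|$ (tangential), both $\geq 1$ when $|x|\geq 2Q$. The paper stops there: since $J_Q$ is $C^2$ away from $\partial B_Q$, the Hessian bound on $\mathbb{R}^N\setminus B_{2Q}$ immediately gives $1$-uniform convexity outside $B_{2Q}$. Your main route, which mollifies $J_Q$ and then invokes Lemma~\ref{lmunifconvsndder}$(ii)$, is correct but a detour: that lemma is designed for convex functions that are \emph{not} $C^2$, and here there is nothing to mollify in the region of interest. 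Your ``alternative route'' is in fact what the paper implicitly relies on, but as written it contains a slip: the function $\theta\mapsto J_Q(\theta z+(1-\theta)z')+\tfrac12\theta(1-\theta)|z-z'|^2$ does \emph{not} vanish at $\theta=0,1$ (its endpoint values are $J_Q(z')$ and $J_Q(z)$), so convexity does not give nonpositivity. The correct auxiliary function is
\[
h(\theta)=J_Q\bigl(\theta z+(1-\theta)z'\bigr)-\theta\,J_Q(z)-(1-\theta)\,J_Q(z')+\tfrac12\,\theta(1-\theta)\,|z-z'|^2,
\]
which has the same second derivative (the subtracted affine part contributes nothing), vanishes at both endpoints, and is therefore $\leq 0$ on $[0,1]$, which is precisely inequality~\eqref{equc} with $\Phi\equiv 1$. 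With that correction, the direct route is the cleanest proof and matches the paper's intent.
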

\begin{proof}
We first observe that $J_Q$ is $C^2$ outside $B_{Q}$. Thus it is sufficient to compute the Hessian of $J_Q$ in $\mathbb{R}^N\setminus B_{2\,Q}$.
Let $x\in \mathbb{R}^N\setminus B_{2\,Q}$, we have
\[
D^2 J_Q(x)=2\,\frac{|x|-Q}{|x|}\, \mathrm{Id}_N+2\,\frac{x\otimes x}{|x|^2}-2\,(|x|-Q)\,\frac{x\otimes x}{|x|^3}.
\]
For every $\eta\in \mathbb{R}^N$ we get
\[
\langle D^2 J_Q(x)\,\eta,\eta\rangle=2\,\frac{|x|-Q}{|x|}\,|\eta|^2+2\,\left[1-\frac{|x|-Q}{|x|}\right]\,\left(\frac{\langle x,\eta\rangle}{|x|}\right)^2\ge 2\,\left(1-\frac{Q}{|x|}\right)\, |\eta|^2,
\]
and thus the conclusion follows, by using that $|x|\ge 2\,Q$.
\end{proof}
\medskip

\end{document}